\title[The geometry of second-order equations]{The geometry of second-order
  ordinary differential equations\footnote{Lecture notes of the Mathematical School on the geometry of differential equations, Nordfjordeid, Norrway, June 1996}}
\author{Boris Doubrov \and Boris Komrakov}
\newtheorem{lem}{Lemma}[section]
\newtheorem{thr}{Theorem}
\newtheorem{prop}{Proposition}
\newtheorem*{cor}{Corollary}
\theoremstyle{definition}
\newtheorem*{defi}{Definition}
\newtheorem*{ex}{Example}
\newtheorem*{exmps}{Examples}
\theoremstyle{remark}
\newtheorem{rem}{Remark}[section]
\newtheorem*{rems}{Remarks}
\newtheorem*{demothr}{Proof of Theorem~2}
\newcommand\R{\mathbb R}
\newcommand\C{\mathbb C}
\newcommand\E{\mathcal E}
\newcommand\M{\mathcal M}
\newcommand\D{\mathcal D}
\newcommand{\Vect}{\mathcal D}
\newcommand\F{\mathcal F}
\renewcommand\L{\Lambda}
\renewcommand\v{\varphi}
\renewcommand\l{\lambda}
\newcommand\al{\alpha}
\renewcommand\a{\alpha}
\renewcommand\b{\beta}
\newcommand\p{\partial}
\newcommand{\G}{\overline G}
\renewcommand{\P}{\overline P}
\newcommand{\g}{\mathfrak g}
\newcommand{\gb}{\bar\g}
\newcommand{\h}{\mathfrak h}
\renewcommand{\aa}{\mathfrak a}
\newcommand{\codim}{\operatorname{codim}}
\newcommand{\om}{\omega}
\newcommand{\Om}{\Omega}
\newcommand{\tom}{\tilde\omega}
\newcommand{\TOm}{\widetilde\Omega}
\newcommand{\TX}{\widetilde X}
\newcommand{\bom}{\bar\om}
\newcommand{\Ad}{\operatorname{Ad}}
\renewcommand{\d}{\,d}
\newcommand{\Mat}{\operatorname{Mat}}
\newcommand{\tr}{\operatorname{tr}}
\newcommand{\const}{\mathrm{const}}
\newcommand{\id}{\operatorname{id}}
\newcommand{\sym}{\operatorname{sym}}
\newcommand{\im}{\operatorname{Im}}
\newcommand{\ov}{\overline}
\newcommand\Lgg{\mathcal L(\gb/\g\land\gb/\g,\gb)}
\newcommand{\po}[1]{\frac{\partial}{\partial #1}}
\newcommand{\arctg}{\arctan}
\begin{document}

\maketitle
\tableofcontents
\section{Introduction to second-order differential equations}

\subsection{The space of first jets in the plane}

By the plane $\R^2$ we mean a smooth manifold with a fixed coordinate system
$(x,y)$.

\begin{defi} The {\it space of first jets\/}, denoted $J^1(\R^2)$, is the set
  of all one-dimensional subspaces (directions) in the tangent spaces
  to~$\R^2$.
\end{defi}

In other words,
$$J^1(\R^2)=\{l_p\mid l_p\text{~is a one-dimensional subspace in~}T_p\R^2\}.$$
Let $\pi\colon J^1(\R^2)\to\R^2$ denote the natural projection which takes
$l_p$ to the corresponding point~$p$ of the plane.

We shall now introduce two local coordinate systems $(x,y,z_1)$ and
$(x,y,z_2)$ in $J^1(\R^2)$. To this end, we note that every direction $l_p$ at
the point $p=(x,y)$ is generated by a nonzero tangent vector $\a\frac\p{\p x}+
\b\frac\p{\p y}$, $\a,\b\in\R$, which is unique up to a constant factor.
Define
\begin{align*}
&U_1=\left\{l_p=\left\langle\a\frac\p{\p x}+\b\frac\p{\p y}\right\rangle
\bigg|\,\a\ne0\right\},\\ 
&U_2=\left\{l_p=\left\langle\a\frac\p{\p x}+\b\frac\p{\p y}\right\rangle
\bigg|\,\b\ne0\right\}. 
\end{align*}
Then $(x,y,z_1=\frac\b\a)$ and $(x,y,z_2=\frac\a\b)$ will be coordinates in 
$U_1$ and~$U_2$ respectively. If we identify $T_p\R^2$ with $\R^2$, then
$U_1$ will contain all directions which are not parallel to the $y$-axis, while
$U_2$ will consist of all those which are not parallel to the $x$-axis. 
The transition function from $U_1$
to~$U_2$ has obviously the form
$$\v_{12}\colon (x,y,z_1)\mapsto(x,y,\frac1{z_1}).$$

These coordinate charts make $J^1(\R^2)$ into a three-dimensional smooth
manifold. In the following, unless otherwise stated, we shall use the local
coordinate system $(x,y,z_1)$ in $J^1(\R^2)$, denoting it simply by $(x,y,z)$.

\begin{defi} Let $N$~ be a one-dimensional submanifold ($\equiv$~%
a non-parametrized curve) in the plane. The \emph{prolongation}
$N^{(1)}$ of~$N$ is a curve in $J^1(\R^2)$ which has the form
$$N^{(1)}=\{T_pN\mid p\in N\}.$$
\end{defi}

\begin{ex}
If $N$~is the graph of some function $y(x)$, that is, if
$N=\big\{\big(x,y(x)\big)\mid x\in\R\big\}$, then, in local coordinates,
$N^{(1)}$ has the form
\begin{equation}\label{eq1}
\big\{\big(x,y(x),y'(x)\big)\mid x\in\R\big\}. 
\end{equation}
\end{ex} 
The tangent space to the curve~(\ref{eq1}) at the point $(x,y,z)=
\big(x,y(x),y'(x)\big)$ is generated by the vector
\begin{equation}\label{eq2}
\frac\p{\p x}+y'(x)\frac\p{\p y}+y''(x)\frac\p{\p z}=
\frac\p{\p x}+z\frac\p{\p y}+y''(x)\frac\p{\p z}.
\end{equation}
Note that, irrespective of the function $y(x)$, this tangent space always lies
in the two-dimensional subspace generated by the vectors 
$\frac\p{\p x}+z\frac\p{\p y}$ and $\frac\p{\p z}$.

\begin{defi} The {\it contact distribution\/}~$C$ on $J^1(\R^2)$ is the family 
$$C_q=\langle T_qN^{(1)}\mid N^{(1)}\ni q\rangle$$
of two-dimensional subspaces in the tangent spaces to $J^1(\R^2)$.
\end{defi}
From~(\ref{eq2}) it immediately follows that in local coordinates
$$C_q=\left\langle\frac\p{\p x}+z\frac\p{\p y},\frac\p{\p z}\right\rangle.$$
Notice that the field $\frac\p{\p z}$ is tangent to the fibers of the
projection $\pi\colon J^1(\R^2)\to\R^2$, so that $\ker d_q\pi\subset C_q$ for
all $q\in J^1(\R^2)$.


By the definition of the contact distribution, the prolongation $N^{(1)}$ of
any curve~$N$ in the plane is tangent to the contact distribution.  It turns
out that the converse is also true.

\begin{lem} If $M$~is a curve in $J^1(\R^2)$ tangent to the contact
  distribution and such that the projection $\pi_{|M}\colon M\to\R^2$ is
  nondegenerate, then, viewed locally, $M$ is the prolongation of some curve
  in the plane.
\end{lem}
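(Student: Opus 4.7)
The plan is to work in local coordinates and reduce the tangency condition to a simple ODE. First I would work in the chart $U_1$ with coordinates $(x,y,z)$, where the contact distribution is spanned by $\frac\p{\p x}+z\frac\p{\p y}$ and $\frac\p{\p z}$. Pick a smooth local parametrization $t\mapsto\bigl(x(t),y(t),z(t)\bigr)$ of $M$. Tangency to $C$ means that the velocity is a linear combination of the two generators, which is equivalent to the single scalar condition
$$y'(t)=z(t)\,x'(t).$$

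Next I would use the nondegeneracy hypothesis. Nondegeneracy of $\pi_{|M}$ says that $d\pi$ is injective on $T_tM$, i.e.\ $\bigl(x'(t),y'(t)\bigr)\ne(0,0)$ for every $t$. Combined with the tangency relation $y'=z\,x'$, this forces $x'(t)\ne 0$: otherwise $y'(t)=z(t)\cdot 0=0$ as well, contradicting nondegeneracy. Consequently $x$ itself may serve as a local parameter along $M$, so in a neighbourhood of any chosen point we may write
$$M=\{(x,y(x),z(x))\mid x\in I\}.$$

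Rewriting the tangency condition with $x$ as the parameter gives $y'(x)=z(x)$. Hence
$$M=\{(x,y(x),y'(x))\mid x\in I\},$$
which by the formula~(\ref{eq1}) is precisely the prolongation $N^{(1)}$ of the curve $N=\{(x,y(x))\mid x\in I\}$. For points of $M$ lying outside $U_1$ (necessarily in $U_2$), the analogous computation in the chart $(x,y,z_2)$ realizes $M$ locally as the prolongation of a curve of the form $x=x(y)$.

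The only mildly delicate step is the implication from nondegeneracy to $x'(t)\ne 0$; the key observation is that the tangency constraint $y'=z\,x'$ exactly couples the two projected components, so a zero in $x'$ automatically propagates to a zero in $y'$, and this is precisely what the nondegeneracy of $\pi_{|M}$ excludes. Everything else is a direct rewriting in coordinates together with an appeal to the example following the definition of prolongation.
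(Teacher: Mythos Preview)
Your proof is correct and follows essentially the same approach as the paper: parametrize $M$ in local coordinates, use the tangency condition to obtain $y'=z$, and recognize the result as a prolongation. You are in fact more careful than the paper, which simply declares ``without loss of generality'' that the projection is a graph $y=y(x)$; your argument that $x'(t)\ne 0$ (via $y'=zx'$ combined with nondegeneracy) actually justifies this step.
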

\begin{proof} Without loss of generality we may assume that the projection
of~$M$ onto $\R^2$, locally, is the graph of some function $y(x)$. Then
$$M=\big\{\big(x,y(x),z(x)\big)\mid x\in\R\big\}.$$
But the vector $\frac\p{\p x}+y'\frac\p{\p x}+z'(x)\frac\p{\p x}$ lies in 
$C_{(x,y(x),z(x))}$ if and only if $z=y'$ and $M$ is the prolongation of the
curve $N=\big\{\big(x,y(x)\big)\mid x\in\R\big\}$.
\end{proof}

Now let $\v\colon\R^2\to\R^2$~be an arbitrary diffeomorphism of the plane.

\begin{defi} The diffeomorphism $\v^{(1)}\colon J^1(\R^2)\to J^1(\R^2)$ defined
  by 
  $$\v^{(1)}\colon l_p\mapsto (d_p\v)(l_p)$$ 
  is said to be the {\it prolongation\/} of~$\v$.
\end{defi}

It is easily verified that the following diagram is commutative:
$$\begin{CD}
J^1(\R^2) @>\v^{(1)}>> J^1(\R^2)\\
@VV\pi V @VV\pi V\\
\R^2 @>\v>>\R^2
\end{CD}$$

\begin{lem} For any diffeomorphism $\v$ the prolongation  $\v^{(1)}$ 
  preserves the contact distribution $C$ (i.e., 
  $d_q\v^{(1)}(C_q)=C_{\v^{(1)}(q)}$ for all $q\in J^1(\R^2)$).
\end{lem}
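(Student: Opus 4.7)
The plan is to exploit the fact, implicit in the definition of the contact distribution, that $C_q$ is spanned by tangent vectors to prolongations $N^{(1)}$ passing through $q$, and to show that $\v^{(1)}$ sends prolongations to prolongations. Once this is established, preservation of $C$ is immediate from the chain rule.

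First I would prove the key intertwining identity: for any one-dimensional submanifold $N\subset\R^2$,
\[
\v^{(1)}(N^{(1)})=\bigl(\v(N)\bigr)^{(1)}.
\]
Indeed, a point of $N^{(1)}$ is of the form $T_pN$ for some $p\in N$; applying $\v^{(1)}$ gives $(d_p\v)(T_pN)=T_{\v(p)}\v(N)$, which is the point of $\bigl(\v(N)\bigr)^{(1)}$ lying over $\v(p)\in\v(N)$. So $\v^{(1)}$ carries the prolongation of $N$ onto the prolongation of $\v(N)$, and in particular the commutative square above just expresses this fact pointwise.

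Next I would differentiate this identity. Fix $q\in J^1(\R^2)$ and any prolongation $N^{(1)}$ through $q$. Writing $q'=\v^{(1)}(q)$, the map $\v^{(1)}$ restricts to a diffeomorphism $N^{(1)}\to\bigl(\v(N)\bigr)^{(1)}$, so
\[
d_q\v^{(1)}\bigl(T_qN^{(1)}\bigr)=T_{q'}\bigl(\v(N)\bigr)^{(1)}\subset C_{q'}.
\]
Taking the span over all curves $N$ with $q\in N^{(1)}$ yields $d_q\v^{(1)}(C_q)\subset C_{q'}$. Applying the same argument to the diffeomorphism $\v^{-1}$ (whose prolongation is clearly $(\v^{(1)})^{-1}$) gives the reverse inclusion, hence equality.

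There is no serious obstacle here; the only point requiring a little care is the first identity, which rests on the chain-rule computation $d_p\v(T_pN)=T_{\v(p)}\v(N)$. Alternatively, one could verify the lemma by writing $\v(x,y)=\bigl(X(x,y),Y(x,y)\bigr)$, computing $\v^{(1)}$ explicitly in the chart $(x,y,z)$ as
\[
\v^{(1)}(x,y,z)=\left(X,Y,\frac{Y_x+zY_y}{X_x+zX_y}\right),
\]
and checking directly that the push-forward of the generators $\p_x+z\p_y$ and $\p_z$ of $C$ lie in the span of the corresponding generators at $\v^{(1)}(q)$; but the coordinate-free argument above is cleaner and avoids the routine calculation.
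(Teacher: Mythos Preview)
Your argument is correct and follows exactly the route the paper takes: the paper's proof is the one-line observation that the result is immediate from the definition of $C$ together with the identity $\v^{(1)}(N^{(1)})=\v(N)^{(1)}$, and you have simply supplied the details of that identity and the span argument. The alternative coordinate check you mention is also anticipated by the paper, which computes the explicit formula for $C(x,y,z)$ immediately after the lemma.
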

\begin{proof} The proof is immediate from the definition of the contact
  distribution and the fact that $\v^{(1)}(N^{(1)})=\v(N)^{(1)}$ for any curve
  $N\subset\R^2$.
\end{proof}

To find the expression for $\v^{(1)}$ in local coordinates, we assume that 
$\v(x,y)=(A(x,y),B(x,y))$ for some smooth functions $A$ and $B$ in the plane.
Then $\v^{(1)}$ has the form
$$ \v^{(1)}\colon (x,y,z)\mapsto (A(x,y),B(x,y),C(x,y,z)).$$
If we explicitly write down the condition that the contact distribution is
invariant under $\v^{(1)}$, we get
$$C(x,y,z)=\frac{B_x+B_yz}{A_x+A_yz},$$
where $A_x,A_y,B_x,B_y$ denote the partial derivatives of $A$ and $B$ with
respect to $x$ and $y$.

\subsection{Second-order equations in the plane}\label{secord:pl}

Let
\begin{equation}\label{eq3}
y''=F(x,y,y')
\end{equation}
be an arbitrary second-order differential equation solved for the highest
derivative. Consider the direction field~$E$ in the chart $U_1$ with
coordinates $(x,y,z)$ defined by
$$E_{(x,y,z)}=\left\langle\frac\p{\p x}+z\frac{\p}{\p y}+F(x,y,z)\frac
\p{\p z}\right\rangle.$$

\begin{lem} If $N$~is the graph of the function~$y(x)$, then $y(x)$~is a
  solution of equation~{\rm(\ref{eq3})} if and only if $N^{(1)}$~is an integral
  curve of the direction field~$E$.
\end{lem}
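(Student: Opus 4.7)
The plan is to reduce everything to a comparison of the two generating vectors, using formula (\ref{eq2}) for the tangent to $N^{(1)}$ which the excerpt has already derived. Since $N$ is the graph of $y(x)$, its prolongation is parametrized by $x\mapsto (x,y(x),y'(x))$, and by (\ref{eq2}) the tangent space to $N^{(1)}$ at the point $(x,y(x),y'(x))$ is spanned by
$$v(x)=\frac\p{\p x}+y'(x)\frac\p{\p y}+y''(x)\frac\p{\p z}.$$
On the other hand, the direction field $E$ at that same point is generated by
$$w(x)=\frac\p{\p x}+y'(x)\frac\p{\p y}+F\bigl(x,y(x),y'(x)\bigr)\frac\p{\p z},$$
since the $\partial/\partial y$-component of the generator of $E$ is precisely $z=y'(x)$.

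The condition that $N^{(1)}$ be an integral curve of $E$ means exactly that $v(x)\in E_{(x,y(x),y'(x))}=\langle w(x)\rangle$ for every $x$, i.e.\ that $v(x)$ and $w(x)$ are proportional. Because both vectors already have coefficient $1$ in front of $\partial/\partial x$, the only proportionality factor possible is $1$, so $v(x)=w(x)$. Comparing the $\partial/\partial y$-components gives a tautology, and comparing the $\partial/\partial z$-components gives
$$y''(x)=F\bigl(x,y(x),y'(x)\bigr),$$
which is exactly the statement that $y(x)$ solves~(\ref{eq3}). Running the equivalence in both directions gives the lemma.

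There is really no substantive obstacle here; the entire content of the lemma is that equation (\ref{eq2}) was set up so that the $\partial/\partial z$-coefficient of the prolonged tangent vector is $y''(x)$, which matches the defining coefficient $F$ of $E$ precisely when (\ref{eq3}) holds. The only small point worth making explicit is that the common nonzero $\partial/\partial x$-component forces the scalar factor between $v(x)$ and $w(x)$ to be $1$, so no rescaling ambiguity enters the comparison.
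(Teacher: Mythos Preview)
Your proof is correct and follows essentially the same approach as the paper: compute the tangent vector to $N^{(1)}$ via (\ref{eq2}), compare it with the generator of $E$ at the same point, and read off the equivalence from the $\partial/\partial z$-component. You are simply a bit more explicit than the paper about why the proportionality forces equality (the shared coefficient $1$ on $\partial/\partial x$), which is fine.
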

\begin{proof} Suppose $y(x)$~is a solution of equation~(\ref{eq3}). Then
$N^{(1)}=\big\{\big(x,y(x),y'(x)\big)\mid x\in\R\big\}$, and the tangent space
to~$N^{(1)}$ is given by the vector
$$\frac\p{\p x}+y'\frac{\p}{\p x}+y''(x)\frac\p{\p x}=
\frac\p{\p x}+y'\frac{\p}{\p y}+F(x,y,y')\frac\p{\p z}.$$
Now since the coordinate $z$ is equal to $y'$ for all points on~$N^{(1)}$, we
see that $N^{(1)}$~is an integral manifold for~$E$.

The converse is proved in a similar manner.
\end{proof}

Note that $E$ is contained in the contact distribution and that the projection
of~$E$ onto $\R^2$ by means of $d\pi$ is always nondegenerate. Any integral
curve of the direction field~$E$ is therefore a lift of the graph of a solution
of equation~(\ref{eq3}).

Thus, {\bf there is a one-to-one correspondence between the solutions of
  equation~(\ref{eq3}) and the integral curves of the direction field~$E$: the
  graphs of the solutions are projections of the integral curves, while the
  integral curves are  of the graphs of the solutions.}

Let us now consider an arbitrary direction field~$E$ in~$J^1(\R^2)$ such that
\begin{enumerate}
\item[(i)] $E$~is contained in the contact distribution;
\item[(ii)] at each point $q\in J^1(\R^2)$, $E_q$ does not coincide with the
  \emph{vertical} field $V_q=\ker d_q\pi$.
\end{enumerate}
Then $E$ may be regarded as a second-order equation whose solutions are curves
in the plane: a curve $N\subset\R^2$ is by definition a solution of this
equation if $N^{(1)}$ is an integral curve of the field~$E$.
Note that now solutions need no longer be graphs of functions~$y(x)$.

Thus, geometrically, the second-order differential equations (solved for the
highest derivative) may be identified with the direction fields in $J^1(\R^2)$
satisfying  conditions (i) and~(ii). 

Consider how (local) diffeomorphisms of the plane (i.e., ``changes of
variables'' $x,y$) act on these direction fields. Let $E$ be a direction field
contained in the contact distribution and corresponding to some second-order
differential equation in the plane, and let $\v$~be a local diffeomorphism of
the plane. Then $\v$~extends to some local transformation $\v^{(1)}$ of
$J^1(\R^2)$, which acts on the field~$E$ in the following way
$$\big(\v^{(1)}.E\big)_{\v^{(1)}(q)}=d_q\v^{(1)}(E_q)$$
for all $q\in J^1(\R^2)$. It is easy to show that the resulting direction field
still satisfies conditions (i) and~(ii) and defines a new second-order
differential equation.

Let $E_1,~E_2$ be two directions fields contained in the contact distribution
and corresponding to two second-order differential equations. We shall say that
these equations are {\em (locally) equivalent\/} if there exists a (local)
diffeomorphism~$\v$ of the plane such that its prolongation $\v^{(1)}$ takes 
$E_1$ into~$E_2$.

\subsection{Pairs of direction fields in space}

Let $V$~be the vertical direction field in $J^1(\R^2)$ defined by $V_q=\ker
d_q\pi$ for all $q\in J^1(\R^2)$. As we mentioned before, the direction
field~$V$ is contained in the tangent distribution~$C$. In accordance with the
previous subsection, a second-order differential equation can be considered as
another direction field~$E$, contained in~$C$, such that
$$C_q=V_q\oplus E_q\quad\text{for all }q\in J^1(\R^2).$$ 
Conversely, suppose we are given two arbitrary direction fields $E_1$ and~$E_2$
in space that differ at each point of the space. Then $E_1$ and~$E_2$ define
a two-dimensional distribution, denoted $\widetilde C=E_1\oplus E_2$. We say
that a pair of direction fields $E_1$ and $E_2$ is {\it nondegenerate} if the
corresponding distribution~$\widetilde C$ is not completely integrable.

We shall now show that the problem of local classification of pairs of
direction fields in space is equivalent to local classification of second-order
differential equations up to diffeomorphisms of the plane. We shall need the
following theorem.
\begin{thr}\label{thr1} 
If $\widetilde C$~is a two-dimensional, not completely integrable distribution
in space and $\widetilde V$~is an arbitrary direction field contained
in~$\widetilde C$, then the pair $(\widetilde C,\widetilde V)$ 
is locally equivalent to the pair $(C,V)$ in the space $J^1(\R^2)$, that is, 
there exists a local diffeomorphism $\v\colon\R^3\to J^1(\R^2)$ such that
$d\v(\widetilde C)=C$ and $d\v(\widetilde V)=V$.
\end{thr}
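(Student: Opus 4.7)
The plan is to construct the required diffeomorphism by passing to the leaf space of $\widetilde V$ and realizing the three-manifold as a first-jet space over it. In a neighbourhood $U$ of a fixed point, I would first straighten $\widetilde V$ via the flow-box theorem, choosing coordinates $(x,y,z)$ on $U$ in which $\widetilde V=\langle\p/\p z\rangle$. The obvious projection $\pi\colon U\to\R^2$, $(x,y,z)\mapsto(x,y)$, then has the leaves of $\widetilde V$ as fibres and plays the role of the base projection of $J^1(\R^2)$.

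Since $\widetilde V\subset\widetilde C$, the differential $d\pi_q$ collapses the two-plane $\widetilde C_q$ onto a one-dimensional subspace $d\pi_q(\widetilde C_q)\subset T_{\pi(q)}\R^2$, so we obtain a well-defined map
$$\Phi\colon U\to J^1(\R^2),\qquad \Phi(q)=d\pi_q(\widetilde C_q),$$
satisfying $\pi_{J^1}\circ\Phi=\pi$. The task is then to show that $\Phi$ is a local diffeomorphism and that it carries the structures on $U$ to the corresponding ones on $J^1(\R^2)$.

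The crux is showing $\Phi$ is a local diffeomorphism, and this is where non-integrability of $\widetilde C$ enters. Pick a vector field $X\in\widetilde C$ transverse to $\widetilde V$; after subtracting a multiple of $\p/\p z$ we may take $X=a(x,y,z)\p/\p x+b(x,y,z)\p/\p y$. The restriction of $\Phi$ to a fibre of $\pi$ is the curve $z\mapsto[a(x,y,z):b(x,y,z)]$ in the projective line over $T_{(x,y)}\R^2$, whose velocity is nonzero iff $ab_z-ba_z\neq0$. On the other hand, $[\p/\p z,X]=a_z\p/\p x+b_z\p/\p y$ fails to lie in $\widetilde C=\langle\p/\p z,X\rangle$ exactly when the same determinant is nonzero. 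So the Frobenius criterion for non-integrability is equivalent to the fibrewise immersion property of $\Phi$; combined with $\pi_{J^1}\circ\Phi=\pi$ and a dimension count, this makes $\Phi$ a local diffeomorphism of three-manifolds.

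Compatibility with the distributions is then automatic. The fibres of $\pi$ are the leaves of $\widetilde V$ and go to fibres of $\pi_{J^1}$, which are the leaves of $V$, so $d\Phi(\widetilde V)=V$. For the contact distribution, $d\Phi(\widetilde C_q)$ is two-dimensional, contains $V_{\Phi(q)}=d\Phi(\widetilde V_q)$, and projects under $d\pi_{J^1}$ onto the line $d\pi_q(\widetilde C_q)=\Phi(q)$; the contact plane $C_{\Phi(q)}$ has exactly the same description, so the two must coincide. The main obstacle in the whole argument is the bracket computation equating non-integrability with the fibrewise non-degeneracy of $\Phi$; everything else is a formal consequence of the setup.
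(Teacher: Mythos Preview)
Your argument is correct. The paper itself does not supply a proof of this theorem at all: it simply refers the reader to Olver~\cite{olver2} and Gardner~\cite{gar}. So there is nothing to compare approaches against within the text; you have produced a self-contained proof where the paper has only a citation.

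For the record, your route is the natural geometric one: straighten $\widetilde V$, project to the leaf space, and observe that the resulting map to $J^1(\R^2)$ is automatically compatible with the vertical and contact structures, with non-integrability of $\widetilde C$ delivering exactly the fibrewise non-degeneracy needed for a local diffeomorphism. The bracket/determinant computation $ab_z-ba_z\neq0$ is the only substantive step and you have it right. The cited sources (Gardner's equivalence method, Olver's treatment of contact structures) arrive at the same normal form, typically by a coframe or Pfaffian-system argument rather than by constructing the map to $J^1(\R^2)$ directly; your construction is arguably more transparent because it exhibits the equivalence intrinsically as ``project $\widetilde C$ along $\widetilde V$.''
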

\begin{proof} See, for example, \cite[Chapter~11]{olver2}, \cite{gar}.
\end{proof}

Now let $E_1,E_2$~be a nondegenerate pair of direction fields in space and let
$\widetilde C=E_1\oplus E_2$. If $\v\colon\R^3\to J^1(\R^2)$ is a local
diffeomorphism establishing the equivalence of the pairs
$(\widetilde C,E_1)$ and $(C,V)$, then the direction field $E=d\v(E_2)$
determines a second-order equation in the plane.

\begin{thr}\label{thr2} Two nondegenerate pairs of direction fields are locally
  equivalent if and only if so are the corresponding second-order differential
  equations in the plane.
\end{thr}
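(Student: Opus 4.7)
The plan is to reduce both directions of the equivalence to a single key observation: \emph{any local diffeomorphism $\Phi$ of $J^1(\R^2)$ that preserves both the vertical field $V$ and the contact distribution $C$ is the prolongation $\v^{(1)}$ of some local diffeomorphism $\v$ of the plane.} Once this is established, the theorem follows by a routine diagram chase using the trivializing maps furnished by Theorem~\ref{thr1}.

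First I would fix notation. Let $(E_1,E_2)$ and $(E_1',E_2')$ be the two nondegenerate pairs, with associated distributions $\widetilde C=E_1\oplus E_2$ and $\widetilde C'=E_1'\oplus E_2'$. Theorem~\ref{thr1} supplies local diffeomorphisms $\v_1,\v_2\colon\R^3\to J^1(\R^2)$ with $d\v_1(\widetilde C)=C$, $d\v_1(E_1)=V$ and $d\v_2(\widetilde C')=C$, $d\v_2(E_1')=V$, and the associated direction fields $E=d\v_1(E_2)$, $E'=d\v_2(E_2')$ define the two second-order equations in the plane.

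Granting the key observation, the forward direction proceeds as follows: an equivalence $\psi\colon\R^3\to\R^3$ of the pairs gives $\Phi=\v_2\circ\psi\circ\v_1^{-1}$, a local self-diffeomorphism of $J^1(\R^2)$ carrying $V$ to $V$, $C$ to $C$, and $E$ to $E'$. By the key observation $\Phi=\v^{(1)}$ for some plane diffeomorphism $\v$, which therefore witnesses the equivalence of the two ODEs. The converse is completely symmetric: if the ODEs are equivalent via $\v^{(1)}$, then $\psi=\v_2^{-1}\circ\v^{(1)}\circ\v_1$ is a local diffeomorphism of $\R^3$ sending $E_1$ to $E_1'$ (because all three factors preserve the appropriate vertical direction fields on their domains) and $E_2$ to $E_2'$.

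The main obstacle is the key observation itself. Since $V=\ker d\pi$ and $\Phi$ preserves $V$, $\Phi$ sends fibers of $\pi$ to fibers of $\pi$ and hence descends to a local plane diffeomorphism $\v$ with $\pi\circ\Phi=\v\circ\pi$. The prolongation $\v^{(1)}$ is another lift of $\v$ that preserves $V$ and $C$, so it suffices to show that the $C$-preserving lift of a given plane diffeomorphism is unique. This is exactly the content of the coordinate computation carried out earlier in the paper: writing $\v(x,y)=(A(x,y),B(x,y))$ and any lift as $(x,y,z)\mapsto(A,B,C(x,y,z))$, the invariance of the contact distribution forces $C(x,y,z)=(B_x+B_yz)/(A_x+A_yz)$, which is precisely the third coordinate of $\v^{(1)}$. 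Hence $\Phi=\v^{(1)}$, and the plan is complete.
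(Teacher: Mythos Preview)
Your proposal is correct and follows essentially the same route as the paper: both arguments transport the pairs to $J^1(\R^2)$ via Theorem~\ref{thr1}, reducing the question to the equivalence of the pairs $(V,E)$ and $(V,E')$, and then invoke the characterization of prolongations among contact transformations preserving~$V$. Your ``key observation'' is precisely the paper's Lemma~1.4, and your inline proof of it matches the coordinate computation the paper alludes to.
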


Before proceeding to the proof of Theorem~\ref{thr2}, we establish one
auxiliary result.

\begin{defi} A transformation of the space $J^1(\R^2)$ is called \emph{contact}
  if it preserves the contact distribution.
\end{defi}

For example, the prolongation $\v^{(1)}$ of any diffeomorphism
$\v\colon\R^2\to\R^2$ is a contact transformation.

\begin{lem} A contact transformation has the form~$\v^{(1)}$, where $\v$ is a
  diffeomorphism of the plane, if and only if it preserves the vertical
  direction field~$V$.
\end{lem}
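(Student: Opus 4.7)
The \emph{only if} direction is essentially read off from the commutative diagram established earlier: if $\psi=\v^{(1)}$ then $\pi\circ\v^{(1)}=\v\circ\pi$, so $\v^{(1)}$ maps $\pi$-fibers to $\pi$-fibers, and therefore its differential sends $V=\ker d\pi$ to itself.

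For the \emph{if} direction, my plan is to split the work into two stages: first extract a base map $\v$ from $\psi$, then show that $\psi$ and $\v^{(1)}$ agree. Suppose $\psi$ is a contact transformation preserving $V$. Since $V$ is the tangent field to the fibers of $\pi$, preservation of $V$ means $\psi$ permutes the connected components of the $\pi$-fibers. Hence $\psi$ descends to a smooth map $\v\colon\R^2\to\R^2$ with $\pi\circ\psi=\v\circ\pi$, and $\v$ is a (local) diffeomorphism because $\psi$ is.

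The key step is to show that $\psi=\v^{(1)}$. Consider the composition $\theta=(\v^{(1)})^{-1}\circ\psi$. Both factors are contact transformations preserving $V$ and both cover~$\v$, so $\theta$ is a contact transformation preserving $V$ and covering the identity on~$\R^2$; in particular $\pi\circ\theta=\pi$. I claim $\theta=\id$. For this I would use the following geometric reconstruction of a point $q=l_p\in J^1(\R^2)$ from the contact data: in the local frame we have
$$C_q=\left\langle\frac{\p}{\p x}+z\frac{\p}{\p y},\,\frac{\p}{\p z}\right\rangle,\qquad V_q=\left\langle\frac{\p}{\p z}\right\rangle,$$
so $d_q\pi(C_q)$ is precisely the line $\langle\p_x+z\p_y\rangle\subset T_p\R^2$, which is the direction $l_p=q$ itself. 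Thus $q$ is recovered tautologically as $d_q\pi(C_q)\in J^1(\R^2)$.

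Applying this to $\theta$: write $q'=\theta(q)$, so $q'$ lies in the same fiber $\pi^{-1}(p)$. Since $d\theta(C_q)=C_{q'}$ and $d\pi\circ d\theta=d\pi$, we get
$$q'=d_{q'}\pi(C_{q'})=d_{q'}\pi\bigl(d_q\theta(C_q)\bigr)=d_q\pi(C_q)=q,$$
so $\theta=\id$ and therefore $\psi=\v^{(1)}$. The only nontrivial ingredient is the identity $d_q\pi(C_q)=q$, which is where the specific geometry of the contact distribution (as opposed to an arbitrary $2$-plane field containing $V$) is used; I expect this to be the subtle point but it is immediate from the coordinate formula recalled above.
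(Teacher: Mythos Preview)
Your proof is correct. The paper's own argument is very terse: it notes necessity is obvious, observes that $\psi$ descends to a diffeomorphism $\v$ of the plane, and then says ``straightforward computation shows that $\psi$ is uniquely determined by $\v$ and hence coincides with $\v^{(1)}$.'' The intended computation is presumably the coordinate one already carried out just before the lemma: writing $\psi(x,y,z)=(A(x,y),B(x,y),C(x,y,z))$ and imposing invariance of $C$ forces $C=(B_x+B_yz)/(A_x+A_yz)$, which is exactly the formula for $\v^{(1)}$.

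Your route is a cleaner geometric repackaging of the same uniqueness statement. Instead of solving for the third coordinate directly, you reduce to showing that a contact transformation $\theta$ covering the identity and preserving $V$ must be the identity, and you prove this via the tautological identity $d_q\pi(C_q)=q$. This identity is really the conceptual content behind the coordinate formula for $C$: it says that the contact plane at $q$ remembers $q$ once you project to the base. The advantage of your formulation is that it makes transparent \emph{why} the contact condition pins down $\psi$ over $\v$, rather than verifying it by brute force; the paper's version has the virtue of being one line once the coordinate formula for $\v^{(1)}$ is already on the page.
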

\begin{proof} The necessity of this condition is obvious. Assume that $\psi$ is
  a contact transformation preserving~$V$. Then $\psi$ defines, in the obvious
  way, a diffeomorphism~$\v$ of the plane. Straightforward computation shows
  that $\psi$ is uniquely determined by~$\v$ and hence coincides
  with~$\v^{(1)}$.
\end{proof}

\begin{demothr}
Let $(E_1,E_2)$ and $(E_1',E_2')$~be two nondegenerate pairs of direction
fields in space, and let $E$ and~$E'$ be the direction fields, contained
in~$C$, that determine the corresponding differential equations. It is clear
that these two pairs of direction fields are locally equivalent if and only if
so are the pairs $(V,E)$ and $(V,E')$.

If $\psi$~is a local diffeomorphism of the space $J^1(\R^2)$ carrying the pair 
$(V,E)$ into the pair $(V,E')$, then $\psi$ preserves the contact distribution 
$C=V\oplus E=V\oplus E'$ and, at the same time, leaves invariant the direction
field~$V$. Hence there exists a local diffeomorphism~$\v$ of the plane such
that $\psi=\v^{(1)}$. Then, by definition, $\v$ establishes the equivalence of
the corresponding second-order differential equations.

The converse is obvious. \hfill\qed
\end{demothr}

\subsection{Duality}

Suppose that a direction field $E$, contained in the contact distribution of
$J^1(\R^2)$, determines a second-order equation in the plane. Then, by
Theorem~1, there exists a local diffeomorphism~$\v$ of the space $J^1(\R^2)$
that takes the pair of distributions $(C,E)$ into the pair $(C,V)$, so that the
direction field~$E$ becomes vertical, while the vertical direction field~$V$
is transformed in some direction field~$E'$ lying in the contact distribution
and satisfying conditions (i), (ii) of subsection~\ref{secord:pl}.

\begin{defi} The second-order differential equation defined by the field~$E'$
  is said to be \emph{dual} to the equation corresponding to~$E$.
\end{defi}

\begin{ex} Consider the differential equation $y''=0$. The corresponding
  direction field is
$$E=\left\langle\frac\p{\p x}+z\frac\p{\p y}\right\rangle.$$
The Legendre transformation
$$(x,y,z)\mapsto (z,y-xz,-x)$$
is a contact transformation, and it takes the distribution~$E$ into the
vertical distribution~$V$, while the latter is transformed into~$E$. Thus the
equation $y''=0$ coincides with its own dual equation.
\end{ex}

Note that in general there exist several contact transformations carrying
$E$ into~$V$. Moreover, if $\psi_1$ and $\psi_2$ are two such transformations,
then the mapping $\psi_1\circ\psi_2^{-1}$ preserves the pair $(C,V)$ and hence
has the form $\v^{(1)}$ for some local diffeomorphism~$\v$ of the plane.
Therefore the dual equation is well defined only up to equivalence by local
diffeomorphisms of the plane.

In the language of pairs of direction fields, the transition to the dual
equation is equivalent to interchanging the positions of the direction fields
of the pair. A description of the dual equation can also be given in terms of
the solutions of the initial second-order equation when they are written in
the form of a two-parameter family $F(x,y,u,v)=0$, where $u,v$~ are parameters
(say, $u=y(0)$ and $v=y'(0)$). If we now consider $x$ and~$y$ as parameters,
$u$ as an independent and $v$ as a dependent variable, we obtain a 
two-parameter family of curves in the plane, which coincides with the family
of solutions of the dual second-order equation. (See \cite{arn} for details.)

\section{Cartan connections}
In this section we give only basic definitions and list the results we shall use
later. For more detail see~\cite{kob}.

\subsection{Definitions} 
Let $\G$ be a finite-dimensional Lie group, $G$ a closed subgroup of~$\G$, and 
$M_0=\G/G$ the corresponding homogeneous space of the Lie group~$\G$. Further,
let $\gb$ be the Lie algebra of~$\G$, and let $\g$ be the subalgebra of~$\gb$
corresponding to~$G$. We identify $\gb$ with the tangent space $T_e\G$.
 
Suppose $M$ is a smooth manifold of dimension 
$\dim M_0=\codim_{\gb}\g$, and $\pi\colon P\to M$ is a principal fiber bundle
with structural group~$G$ over~$M$. For $X\in\g$, let $X^*$ denote the
fundamental vector field on~$P$, corresponding to~$X$; for $g\in G$ let $R_g$
denote the right translation of~$P$ by the element~$g$:
$$R_g\colon P\to P,\ p\mapsto pg, \quad p\in P.$$

\begin{defi} A {\em Cartan connection\/} in the principal fiber bundle~$P$ is a
  1-form $\om$ on $P$ with values in~$\gb$ such that 
\begin{enumerate}[1$^\circ$.]
\item $\om(X^*)=X$ for all $X\in\g$;
\item $R_g^*\om=(\Ad g^{-1})\om$ for all $g\in G$;
\item $\om_p\colon T_pP\to\gb$ is a vector space isomorphism for all 
$p\in P$.
\end{enumerate}
\end{defi} 

\begin{ex} Suppose $M=M_0$; then $P=\G$ may be considered as a principal fiber
  bundle over~$M$ with structural group~$G$. We define $\om$ to be the
  canonical left-invariant Maurer--Cartan form on~$\G$. It is easily verified
  that $\om$ satisfies the conditions $1^\circ$--$3^\circ$ in the above
  definition, and hence $\om$ is a Cartan connection. We shall call it the 
{\em canonical Cartan connection of the homogeneous space\/} $(\G,M_0)$.
\end{ex}

\subsection{Coordinate notation} If $s_{\alpha}$ is the section of $\pi\colon
P\to M$ defined on an open subset $U_{\alpha}\subset M$, we can identify
$\pi^{-1}(U_{\alpha})$ with $U_{\alpha}\times G$ as follows:
$$\phi_{\alpha}\colon U_{\alpha}\times G\to \pi^{-1}(U_{\alpha}),\ 
(x,g)\mapsto s_{\alpha}(x)g.$$

Given some other section $s_{\beta}\colon U_{\beta}\to P$ such that the
intersection $U_{\alpha}\cap U_{\beta}$ is non-empty, we can consider the {\em
  transition function\/}~$\psi_{\alpha\beta}$, which is a function on
$U_{\alpha}\cap U_{\beta}$ with values in~$G$, uniquely
defined by
$$\phi_{\beta}\circ\phi_{\alpha}^{-1}\colon \big(U_{\alpha}\cap U_{\beta}\big)
\times G \to \big(U_{\alpha}\cap U_{\beta}\big)\times G,\ 
(x,g)\mapsto (x, \psi_{\alpha\beta}g).$$

Any principal fiber bundle~$P$ is uniquely determined by a covering
$\{U_{\alpha}\}_{\alpha\in I}$ of the manifold~$M$ with given transition
functions $\psi_{\alpha\beta}\colon U_{\alpha}\cap U_{\beta}\to G$ satisfying
the obvious condition:
$\psi_{\alpha\gamma}=\psi_{\alpha\beta}\psi_{\beta\gamma}$ for all
$\alpha,\beta,\gamma\in I$.

Let $\om$ be a Cartan connection on~$P$. The section $s_\alpha\colon
U_{\alpha}\to P$ determines the 1-form $\om_{\alpha}=s_{\alpha}^*\om$ on
$U_{\alpha}$ with values in~$\gb$. It is easy to verify that for any other
section $s_{\beta}\colon U_{\beta}\to P$ and the corresponding form
$\om_{\beta}=s_{\beta}^*\om$, the following relation holds on the intersection 
$U_{\alpha}\cap U_{\beta}$:
\begin{equation}\label{tr:f}
\om_{\beta}=\Ad\big(\psi_{\alpha\beta}^{-1}\big)\om_{\alpha}+
\psi_{\alpha\beta}^*\theta,
\end{equation}
where $\theta$ is the canonical left-invariant Maurer--Cartan form on~$G$.

\begin{rem} Formula (\ref{tr:f}) is completely identical with the corresponding
  formula for ordinary connections on principal fiber bundles. The difference
  is that in the latter case the forms $\om_{\alpha},\om_{\beta}$ have their
  values in~$\g$, not in~$\gb$.
\end{rem}

Conversely, suppose that on every submanifold $U_{\alpha}$, $\alpha\in I$,
there is given a 1-form $\om_{\alpha}$ with values in~$\gb$, and 
\begin{enumerate}[1$^\circ$.] 
\item for any $\alpha,\beta\in I$, we have (\ref{tr:f});
\item for any $\alpha\in I$ and any point $x\in U_{\alpha}$, the mapping
$$T_xM\to\gb/\g,\ v\mapsto (\om_{\alpha})_x(v)+\g $$
is a vector space isomorphism.
\end{enumerate}
It is not hard to show that there exists a unique Cartan connection~$\om$
on~$P$ such that $\om_{\alpha}=s_{\alpha}^*\om$ for all $\alpha\in I$.

\subsection{Cartan connections and ordinary connections} Let $\om$ be a Cartan
connection on~$P$. Consider the associated fiber bundle 
$\P=P\times_G\G$, where $G$ acts on~$\G$ by left shifts. Denote by $[(p,g)]$
the image of the element $(p,g)\in P\times \G$ under the natural projection
$P\times \G\to \P$. The right action
$$ [(p,g_1)]g_2=[(p,g_1g_2)],\quad [(p,g_1)]\in \P,\,g_2\in \G,$$ 
of $\G$ on $\P$ provides the fiber bundle~$\P$ with a natural structure
of principal fiber bundle over~$M$ with structural group~$\G$.
We shall identify $P$ with a subbundle in~$\P$ by means of the embedding
$$P\hookrightarrow\P,\ p\mapsto [(p,e)].$$

It can be easily shown that there exists a unique connection form~$\bom$
on~$\P$ such that $\om=\bom|_P$. Conversely, if $\bom$ is a connection form
on~$\P$ such that
\begin{equation}\label{con}
\ker \bom_p\oplus T_pP=T_p\P \quad\text{ for all } p\in P, 
\end{equation}
then the 1-form $\om=\bom|_P$ with values in~$\gb$ is a Cartan connection
on~$P$. Thus we obtain the following result:
\begin{lem}\label{pr1} Cartan connections on~$P$ are in one-to-one
  correspondence with the ordinary connections on~$\P$ whose connection forms
  satisfy condition~(\ref{con}). 
\end{lem}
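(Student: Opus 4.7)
The plan is to construct the correspondence explicitly in both directions and verify the axioms in each. The easy half is restriction: given an ordinary connection form $\bom$ on $\P$ satisfying~(\ref{con}), set $\om := \bom|_P$. Condition $1^\circ$ for a Cartan connection reduces to the ordinary $1^\circ$ for $\bom$ applied to $X\in\g\subset\gb$, since the $G$-fundamental field $X^*$ on $P$ agrees with the $\G$-fundamental field at points of $P$; condition $2^\circ$ follows because $R_g\colon\P\to\P$ preserves $P$ for $g\in G$. Condition $3^\circ$ combines the dimension equality $\dim T_pP = \dim M + \dim G = \dim\gb$ with $\ker\om_p = \ker\bom_p\cap T_pP = 0$ provided by~(\ref{con}).

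For the reverse direction, given a Cartan connection $\om$ on $P$, I construct $\bom$ via the associated-bundle presentation $\P = (P\times\G)/G$. On $P\times\G$ I define
$$\tom_{(p,g)}(v,w) = \Ad(g^{-1})\om_p(v) + \theta_g(w),$$
where $\theta$ is the left-invariant Maurer--Cartan form on $\G$, and I show that $\tom$ is $G$-basic. Invariance under the $G$-action $g_0\cdot(p,g) = (pg_0, g_0^{-1}g)$ uses $R_{g_0}^*\om = \Ad(g_0^{-1})\om$ together with the left-invariance of $\theta$; vanishing on the $G$-fundamental vectors $\bigl(X^*_P,-(R_g)_*X\bigr)$ for $X\in\g$ is exactly axiom $1^\circ$ of $\om$. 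Hence $\tom$ descends to a $\gb$-valued $1$-form $\bom$ on $\P$, and a short direct computation against the right $\G$-action $[p,g]\cdot g' = [p,gg']$ shows that $\bom$ satisfies the ordinary connection axioms.

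Finally I verify compatibility: the embedding $P\hookrightarrow\P$, $p\mapsto[p,e]$, pulls $\bom$ back to $\om$, and~(\ref{con}) holds because $\ker\bom_p\cap T_pP = \ker\om_p = 0$, while the dimension count $\dim\ker\bom_p + \dim T_pP = \dim M + (\dim M + \dim G) = \dim\P$ forces $\ker\bom_p + T_pP = T_p\P$. Bijectivity of the correspondence is automatic, since any ordinary connection on $\P$ is determined by its values at points of $P$ via $\G$-equivariance (because $P\cdot\G = \P$). I expect the main obstacle to be the descent of $\tom$ to $\P$: both the $G$-invariance calculation and the vanishing on $G$-vertical vectors are essentially the two Cartan axioms in disguise, and the real work lies in keeping indices, adjoint actions, and left/right translations straight.
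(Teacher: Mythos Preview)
Your argument is correct and complete. The paper gives no proof beyond the assertion ``It can be easily shown that there exists a unique connection form~$\bom$ on~$\P$ such that $\om=\bom|_P$,'' so your explicit construction via the $G$-basic form $\tom$ on $P\times\G$ and the verification of all axioms supply precisely what the paper omits; this is the standard approach the authors have in mind.
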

 
\begin{rems}\ 

1. Condition~(\ref{con}) is equivalent to the condition that the form
$\om=\bom|_P$ defines an absolute parallelism on~$P$.

2. If $\{U_{\alpha}\}_{\alpha\in I}$ is a covering of~$M$ with given sections 
$s_{\alpha}\colon U_{\alpha}\to P$, then these sections may be extended to
sections ${\bar s}_{\alpha}\colon U_{\alpha}\to\P,\ x\mapsto
[s_{\alpha}(x),e]$, for all $\alpha\in I$, $x\in U_{\alpha}$, and it is clear
that $s_{\alpha}^*\om={\bar s}_{\alpha}^*\bom$. Thus, in terms of
``coordinates,'' the connection form~$\bom$ is defined by the same family 
$\{\om_{\alpha}\}_{\alpha\in I}$ of forms as the Cartan connections~$\om$.
\end{rems}

\subsection{Developments} 
In this subsection, by $L_g$ and~$R_g$ we denote the left and the right shifts 
of the Lie group~$\G$ by the element $g\in\G$. 

Consider the mapping
$$\gamma\colon\P\to\G/G,\quad [p,g]\mapsto g^{-1}G.$$
It is well defined, because for $h\in G$ we have
$$\gamma([ph^{-1},hg])=(hg)^{-1}G=g^{-1}G,\quad [p,g]\in \P.$$
It follows immediately from the definition that
$\gamma(\bar pg)=g^{-1}.\gamma(\bar p)$ for all $\bar p\in\P,g\in\G$.

Let $x(t)$ be an arbitrary curve in~$M$, and $u(t)$ its horizontal lift
into the fiber bundle~$\P$ by means of the connection~$\bom$
(i.e., $\bar\pi(u(t))=x(t)$ and $\bom_{u(t)}(\dot u(t))=0$ for all~$t$).
Note that $u(t)$ is unique up to the right action of~$\G$ on~$\P$.

\begin{defi} A {\em development of the curve $x(t)$ on the manifold~$M$\/} is a
  curve of the form $\tilde x(t)=\gamma(u(t))$ in the homogeneous space $\G/G$.
\end{defi}

Hence the development of a curve in~$M$ is defined uniquely up to the action
of~$\G$ on $\G/G$.

\begin{ex} Suppose $(\G,\G/G)$ is the projective space. Then a curve $x(t)$
  in~$M$ is called a geodesic if its development is a segment of a straight line.
  A similar definition is relevant in any homogeneous space that can be
  included into the projective space, for example, in affine and Euclidean
  spaces. 
\end{ex}

The above definition of development can be easily formulated for one-dimensional
submanifolds of~$M$ diffeomorphic to the line (or, locally, for any
one-dimensional submanifolds). In particular, in the above example one can
speak of one-dimensional geodesic submanifolds.
 
We shall need the following lemma.
\begin{lem}
Let $s_{\alpha}\colon U_{\alpha}\to P$ be a section of the principal fiber
bundle $P$, and let $\om_{\alpha}=s^*\om$. Assume that a smooth curve $x(t)$
lies in $U_{\alpha}$ and consider the curve $X(t)=\om_{\alpha}(\dot x(t))$ in
the Lie algebra~$\gb$. Then up to the action of $\G$ on $M_0$ the development of
$x(t)$ has the form
$\tilde x(t)=h(t)G,$
where $h(t)$ is the curve in $\G$ completely determined by the differential
equation
$$\dot h(t)=d_eL_{h(t)}(X(t)), \quad h(0)=e.$$
\end{lem}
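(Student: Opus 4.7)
The plan is to lift $x(t)$ to a horizontal curve $u(t)$ in the extended bundle $\P$ starting from $\bar s_\alpha(x(0))$, and then read off the development directly from the formula $\tilde x(t)=\gamma(u(t))$.

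Any lift of $x(t)$ to $\P$ staying near $\bar s_\alpha(x(t))$ may be written as $u(t) = \bar s_\alpha(x(t)) \cdot g(t)$ with a curve $g\colon I\to\G$ satisfying $g(0) = e$. By Remark~2 following Lemma~\ref{pr1} we have $\bar s_\alpha^* \bom = s_\alpha^* \om = \om_\alpha$, so the standard gauge-transformation formula for the ordinary connection $\bom$ on $\P$ (the analogue of~(\ref{tr:f}) valid for ordinary connections) gives
$$\bom_{u(t)}(\dot u(t)) = \Ad\bigl(g(t)^{-1}\bigr)\, X(t) + \theta_\G(\dot g(t)),$$
where $\theta_\G$ is the Maurer--Cartan form on~$\G$. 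Thus the horizontality condition $\bom(\dot u) = 0$ becomes a first-order ODE for $g$ with initial value $g(0)=e$, which has a unique solution on a neighbourhood of $0$.

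By the definition of $\gamma$, the development is then $\tilde x(t) = \gamma(u(t)) = g(t)^{-1}G$. Setting $h(t) = g(t)^{-1}$ immediately gives $\tilde x(t) = h(t)G$ with $h(0) = e$. It remains to rewrite the horizontality ODE for $g$ as an ODE for $h$: differentiating the identity $h(t)g(t) = e$ and using $\theta_\G(\dot g) = d_gL_{g^{-1}}(\dot g)$ together with $\Ad(g^{-1}) = d_{g^{-1}}L_g^{-1}\circ d_eR_{g^{-1}}^{-1}$ (equivalently, the matrix identity $\Ad(g^{-1})X = g^{-1}Xg$) transforms the equation into
$$\dot h(t) = d_eL_{h(t)}\bigl(X(t)\bigr), \qquad h(0) = e,$$
which is precisely the claim.

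The only real work is in the last step, namely the bookkeeping required to translate the $\Ad$-plus-Maurer--Cartan presentation of the horizontality ODE for $g$ into the cleaner left-translation presentation for $h = g^{-1}$. Everything else---existence of the extended section $\bar s_\alpha$ with $\bar s_\alpha^*\bom = \om_\alpha$, the gauge-transformation formula for $\bom$, and the explicit description of $\gamma$---has already been established earlier in the section, so it can be quoted directly.
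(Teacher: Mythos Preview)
Your proof is correct and follows essentially the same route as the paper's: write the horizontal lift as $u(t)=\bar s_\alpha(x(t))\,g(t)$, use $\bar s_\alpha^*\bom=\om_\alpha$ to turn horizontality into the ODE $\dot g=-d_eR_{g(t)}(X(t))$ (equivalently $\Ad(g^{-1})X+\theta_\G(\dot g)=0$), and then invert $g$ to get the ODE for $h$. The only cosmetic differences are that the paper computes $\bom(\dot u)$ directly in the local trivialization $U_\alpha\times\G$ rather than invoking the gauge formula~(\ref{tr:f}), and it handles the passage from $g$ to $h=g^{-1}$ via the differential of the inversion map $\tau$ rather than by differentiating $hg=e$; the substance is identical.
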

\begin{proof}
Let $\bom$ be the connection form on the principal $\G$-bundle $\P$
corresponding to the Cartan connection~$\om$, and let $\bar s_{\alpha}\colon
U_{\alpha}\to P$ be the section generated by the section $s_{\alpha}$. Then, as
was pointed out before,
$\om_{\alpha}=s_{\alpha}^*\om=\bar s_{\alpha}^*\bom$.

Thanks to the section $\bar s_{\alpha}$, we may assume, without loss of
generality, that $\P=U_{\alpha}\times \G$ is a trivial fiber bundle, and 
$\bar s_{\alpha}(x)=(x,e)$ for all $x\in U_{\alpha}$. Then the horizontal lift
$u(t)$ of~$x(t)$ with initial condition $u(0)=(x(0),e)$ has the form
$u(t)=(x(t),g(t))$, where $g(t)$ is a curve in $\G$ such that
$\bom(\dot u(t))=0$ for all $t$. But
\begin{multline*}
\bom_{u(t)}(\dot u(t))=\bom_{(x,g)}(\dot x(t),\dot g(t))=\\
=\Ad g(t)^{-1}\circ \bom_{(x,e)}\big(\dot x(t), d_eR_{g}^{-1}(\dot g(t))\big)=\\
=\Ad g(t)^{-1}\big(\om_{\alpha}(\dot x(t))+ d_eR_{g}^{-1}(\dot g(t))\big)=0.
\end{multline*} 
Therefore, $g(t)$ is uniquely determined by the equation
$$ \dot g(t)=-d_eR_{g(t)}(X(t)),\quad g(0)=e.$$

The development of the curve~$x(t)$ has the form $\tilde x(t)=h(t)G$, where
$h(t)=g(t)^{-1}$. 
Denote by $\tau\colon \G\to \G$ the inversion in the Lie group~$\G$. Then
\begin{multline*}
\dot h(t)=d_{g(t)}\tau(\dot g(t))=d_{g(t)}\tau\circ d_eR_{g(t)} (-X(t))=\\
=d_e\big(\tau\circ R_{g(t)}\big)(-X(t))=d_e\big(L_{g(t)^{-1}}\circ \tau\big)(-X(t))=\\
=d_eL_{h(t)}(X(t)),
\end{multline*}
as was to be proved.
\end{proof}

\subsection{Curvature} By the {\em curvature form\/} of a Cartan
connection~$\om$ on~$P$ we understand the 2-form
$$\Omega=d\om+1/2[\om,\om]$$
on $P$ with values in~$\gb$.

If $\bom$ is the curvature form on~$\P$ corresponding to a Cartan
connection~$\om$ and $\overline\Omega$ is the curvature of~$\bom$, then the
form $\Omega$ is precisely the restriction of~$\overline\Omega$ to~$P$.

The definitions of Cartan connection and of the form~$\Omega$ immediately imply
that
\begin{lem}\label{pr2} The curvature form~$\Omega$ satisfies the following conditions:
\begin{enumerate}[1$^\circ$.]
\item $R_g^*\Omega=(\Ad g^{-1})\Omega$ for all $g\in G$;
\item $\Omega(X,Y)=0$ if at least one of the tangent vectors $X,Y$ is vertical.
\item (structure equation) $d\Omega=[d\om,\om]$. 
\end{enumerate}
\end{lem}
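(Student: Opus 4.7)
The plan is to verify the three properties separately, working directly from the definition $\Omega = d\om + \tfrac12[\om,\om]$ together with properties $1^\circ$--$3^\circ$ of a Cartan connection.

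For property $1^\circ$, I would apply $R_g^*$ to the definition of~$\Omega$, push it past $d$ (because $\Ad g^{-1}$ is a constant linear endomorphism of~$\gb$, so it commutes with exterior differentiation) and past the bracket (because $\Ad g^{-1}$ is a Lie algebra automorphism of~$\gb$, so $[\Ad g^{-1}\xi,\Ad g^{-1}\eta]=\Ad g^{-1}[\xi,\eta]$). Using property $2^\circ$ of the Cartan connection, $R_g^*\om=(\Ad g^{-1})\om$, this yields
$$R_g^*\Omega = d\bigl((\Ad g^{-1})\om\bigr)+\tfrac12\bigl[(\Ad g^{-1})\om,(\Ad g^{-1})\om\bigr]=(\Ad g^{-1})\Omega.$$

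For property $2^\circ$ (of~$\Omega$), it suffices to check that $\Omega(X^*,Y)=0$ for every fundamental field $X^*$, $X\in\g$, and every vector field~$Y$ on~$P$. This is the main obstacle: the bracket term $\tfrac12[\om,\om](X^*,Y)=[\om(X^*),\om(Y)]=[X,\om(Y)]$ is nonzero, so we need $d\om(X^*,Y)$ to cancel it. The idea is to use Cartan's magic formula $\mathcal{L}_{X^*}\om=i_{X^*}d\om+d(i_{X^*}\om)$. Since $i_{X^*}\om=\om(X^*)=X$ is a constant $\gb$-valued function by property $1^\circ$ of the Cartan connection, $d(i_{X^*}\om)=0$. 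On the other hand, differentiating $R_{\exp(tX)}^*\om=\Ad(\exp(-tX))\om$ from property $2^\circ$ at $t=0$ gives $\mathcal{L}_{X^*}\om=-\ad(X)\cdot\om$, i.e.\ $\mathcal{L}_{X^*}\om(Y)=-[X,\om(Y)]$. Hence $d\om(X^*,Y)=-[X,\om(Y)]$, which cancels the bracket term and yields $\Omega(X^*,Y)=0$.

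For property $3^\circ$, I would differentiate the definition of~$\Omega$ directly: $d\Omega=d(d\om)+\tfrac12 d[\om,\om]=\tfrac12 d[\om,\om]$. Applying the graded Leibniz rule for $\gb$-valued forms,
$$d[\om,\om]=[d\om,\om]-[\om,d\om],$$
and using the graded symmetry $[\om,d\om]=-(-1)^{1\cdot 2}[d\om,\om]=-[d\om,\om]$ of the bracket on Lie-algebra-valued forms, we get $d[\om,\om]=2[d\om,\om]$, so $d\Omega=[d\om,\om]$. The only care needed here is to track the signs correctly in the graded bracket identities; otherwise the computation is formal.
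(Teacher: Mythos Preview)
Your proof is correct and is precisely the direct verification from the definitions that the paper alludes to (the paper merely states that the lemma follows immediately from the definition of Cartan connection and of~$\Omega$, without writing out the computations). Your handling of the signs in parts $2^\circ$ and~$3^\circ$ is accurate.
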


\begin{ex} It is known that the canonical Maurer--Cartan form~$\theta$ on a Lie
  group satisfies the equation $d\theta+1/2[\theta,\theta]=0$. Hence the
  canonical Cartan connection~$\om$ of an arbitrary homogeneous space
  $(\G,M_0)$ has zero curvature. Conversely, if some Cartan connection on the
  principal fiber bundle~$P$ has zero curvature, using the Frobenius theorem,
  it is easy to show that it is locally isomorphic to the canonical Cartan
  connection.
\end{ex}

We say that a Cartan connection~$\om$ is a {\em connection without torsion\/}
(or {\em connections of zero torsion\/}), if $\Omega(X,Y)\in\g$ for all tangent
vectors $X,Y$.

\begin{rem}
The {\em torsion of a Cartan connection $\om$\/} may be defined as the 2-form
$T$ on~$P$ with values in $\gb/\g$ resulting from the curvature form~$\Omega$
on passing to the quotient by~$\g$. In this case expression ``connection of
zero curvature'' has the literal meaning.
\end{rem}

\subsection{Structure function}
Given an element $X\in\gb$, let $X^*$ be the vector field on~$P$ defined by
$$(X^*)_p=(\om_p)^{-1}(X).$$ 
This definition is in agreement with the definition of the fundamental 
field $X^*$ for
$X\in\g$. Moreover, the field $X^*$ is vertical if and only if $X^*\in\g$.
Consider how $X^*$ changes along the fibers of the fiber bundle
$\pi\colon P\to G$. From the definition of Cartan connection it follows that
$$\om_{pg}\circ d_pR_g=\Ad g^{-1}\circ\om_p\quad\forall p\in P,\, g\in G.$$
Applying both sides of this equality to the tangent vector $X^*_p$ for some 
$X\in\gb$, we get
$$\om_{pg}\big(d_pR_g(X^*_p)\big)=\Ad g^{-1}(X).$$
Hence
$$\big(\Ad g^{-1}(X)\big)_{pg}^*=
d_pR_g(X^*_p)\quad\forall p\in P,\,g\in G$$
or, briefly, $dR_g(X^*)=(\Ad g^{-1}(X))^*$ $\forall g\in G$.
Thus the right shift $R_g$ transforms the fundamental vector field~$X^*$ into
the fundamental vector field corresponding to the vector $(\Ad g^{-1})X$.

\begin{defi} The {\it structure function\/} of a Cartan connection~$\om$ is
  defined as a function $c\colon P\to\mathcal L(\gb/\g\land\gb/\g,\gb)$ such
  that 
$$c(p)\colon(X+\g,Y+\g)\mapsto\Omega_p(X^*,Y^*).$$
\end{defi}

From Lemma~\ref{pr2}, $2^\circ$, we see immediately that $c(p)$ is well
defined for all $p\in P$.

Recall that the group~$G$ acts on the space $\mathcal L(\gb/\g\land\gb/\g,\gb)$
in the following natural way:
$$(g.\varphi)(X+\g,Y+\g)=(\Ad g)\varphi\big(\Ad g^{-1}(X)+\g,
\Ad g^{-1}(Y)+\g\big)$$
for all $X,Y\in\gb$.

\begin{lem} $c(pg)=(g^{-1}).c(p)$ for all $p\in P$, $g\in G$.
\end{lem}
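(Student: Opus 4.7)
The plan is to unfold both sides of the identity using only two inputs: the transformation rule $dR_g(X^*)=\bigl((\Ad g^{-1})X\bigr)^*$ established immediately before the definition of $c$, and the equivariance of the curvature form $R_g^*\Omega=(\Ad g^{-1})\Omega$ from Lemma~\ref{pr2}, $1^\circ$. Everything then reduces to matching arguments in $\gb/\g$, and since both $\Omega$ and the $G$-action on $\mathcal{L}(\gb/\g\land\gb/\g,\gb)$ descend correctly modulo $\g$ (by Lemma~\ref{pr2}, $2^\circ$), there is no issue with well-definedness.

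Concretely, I would fix $X,Y\in\gb$ and compute $c(pg)(X+\g,Y+\g)=\Omega_{pg}\bigl(X^*_{pg},Y^*_{pg}\bigr)$. The transformation rule, rewritten as $X^*_{pg}=d_pR_g\bigl(((\Ad g)X)^*_p\bigr)$ (obtained by substituting $(\Ad g)X$ for $X$ in $dR_g(X^*)=((\Ad g^{-1})X)^*$), turns this into
$$\Omega_{pg}\bigl(d_pR_g(((\Ad g)X)^*_p),\,d_pR_g(((\Ad g)Y)^*_p)\bigr)=(R_g^*\Omega)_p\bigl(((\Ad g)X)^*_p,((\Ad g)Y)^*_p\bigr).$$
Applying the equivariance $R_g^*\Omega=(\Ad g^{-1})\Omega$ and then the definition of $c$ gives
$$c(pg)(X+\g,Y+\g)=(\Ad g^{-1})\,c(p)\bigl((\Ad g)X+\g,\,(\Ad g)Y+\g\bigr),$$
which is exactly $(g^{-1}.c(p))(X+\g,Y+\g)$ by the formula defining the action of $G$.

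There is no real obstacle here: the computation is a direct two-line substitution. The only points that require a moment of care are (i) inverting the transformation formula for fundamental fields so that the $\Ad$ appears on the correct side, and (ii) checking that the two quotient-mod-$\g$ operations on the right-hand side are compatible, which is ensured because $\Omega$ vanishes on vertical vectors.
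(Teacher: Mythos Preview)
Your proof is correct and follows exactly the same argument as the paper: unfold $c(pg)(X+\g,Y+\g)$, rewrite $X^*_{pg}$ via $d_pR_g$ and $\Ad g$, pull back to $(R_g^*\Omega)_p$, apply the equivariance $R_g^*\Omega=(\Ad g^{-1})\Omega$, and identify the result with $(g^{-1}.c(p))(X+\g,Y+\g)$. The paper's proof is precisely this chain of equalities.
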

\begin{proof} For $X,Y\in\gb$, we have
\begin{multline*}
c(pg)(X+\g,Y+\g)=\Omega_{pg}(X^*_{pg},Y^*_{pg})=\\
=\Omega_{pg}\big(d_pR_g(\Ad g(X)^*_p),d_pR_g(\Ad g(Y)_p^*)\big)=\\
=(R_g^*\Omega)_p\big(\Ad g(X)_p^*,\Ad g(Y)^*_p\big)=\\
=\Ad g^{-1}\big(\Omega_p(\Ad g(X)_p^*,\Ad g(Y)_p^*)\big)=\big(
(g^{-1}).c(p)\big)(X+\g,Y+\g),
\end{multline*}
as was to be proved.
\end{proof}

We fix a basis $\{e_1,\dots,e_{n+m}\}$ of the Lie algebra $\gb$ such that
$\{e_{n+1},\dots,e_{n+m}\}$~ form a basis of the subalgebra~$\g$. Then every
element $\varphi\in\mathcal L(\gb/\g\land\gb/\g,\gb)$ is defined by a set of
structure constants $c_{ij}^k$ ($1\leqslant i,j\leqslant n$, 
$1\leqslant k\leqslant n+m$), $c_{ij}^k=-c_{ji}^k$, where
$$\varphi(e_i+\g,e_j+\g)=\sum_{k=1}^{n+m} c_{ij}^k e_k.$$
The structure function $c\colon P\to \mathcal L(\gb/\g\land
\gb/\g,\gb)$ defines a set of functions $c_{ij}^k(p)$. These functions may be
found from the decomposition of the curvature tensor~$\Omega$ in terms of the
components of the form~$\om$. Indeed, if we write the forms $\Omega$ and $\om$
in the form
$$\Omega=\sum_{k=1}^{n+m}\Omega^k e_k,\quad
\om=\sum_{i=1}^{n+m}\om^i e_i,$$
then the forms $\om^1,\dots,\om^{n+m}$ constitute a basis for the module of
differential forms on~$P$, and hence the forms $\Omega^k$,  ($k=1,\dots,n+m$)
can be uniquely expressed in the form
\begin{equation}\label{hare}
\Omega^k=\sum_{1\leqslant i<j\leqslant n+m} c_{ij}^k\om^i\land\om^j
\end{equation}
for some smooth functions $c_{ij}^k$ on~$P$. By
Lemma~\ref{pr2},~$2^\circ$, we have $c_{ij}^k=0$ whenever $i>n$ or $j<n$.
Applying both sides of~(\ref{hare}) to the pairs of the form $(e^*_i,e^*_j)$,
$i,j=1,\dots,n$, we get at once that $c_{ij}^k=c_{ij}^k(p)$ are precisely the
coordinates of the structure function~$c$.

\section{Cartan connections and pairs of direction fields}\label{sec5}

Suppose $\G=SL(3,\R)$~, $G$ is the group of all upper-triangular matrices
in~$\G$, and $M_0=\G/G$.
Then the homogeneous space $(\G,M_0)$ admits the following two interpretations:
\begin{enumerate}
\item[1)] $M_0=J^1(\R P^2)$, the action of~$\G$ is the natural lifting of the
  standard action of $SL(3,\R)$ on $\R P^2$;
\item[2)] $M_0$ is the set of all flags
$$V_1\subset V_2\subset \R^3\quad(\dim V_i=i,\
i=1,2),$$
the action of~$\G$ is generated by the natural action of $SL(3,\R)$
on~$\R^3$.
\end{enumerate}

The corresponding pair $(\gb,\g)$ of Lie algebras has the form
$$\gb=\mathfrak{sl}(3,\R),\quad\g=
\left\{\left.\begin{pmatrix}
x_{11}&x_{12}&x_{13}\\0&x_{22}&x_{23}\\0&0&x_{33}\end{pmatrix}
\right|\ x_{11}+x_{22}+x_{33}=0\right\}.$$
Any differential form~$\om$ with values in~$\gb$ can be uniquely written in the
form
$$\om=\begin{pmatrix}
\om_{11}&\om_{12}&\om_{13}\\
\om_{21}&\om_{22}&\om_{23}\\
\om_{31}&\om_{32}&\om_{33}\end{pmatrix},
\quad \om_{11}+\om_{22}+\om_{33}=0,$$
where $\om_{ij}$~are usual differential forms.

Let $\pi\colon P\to\R^3$~be an arbitrary principal fiber bundle with structural
group~$G$, and let $\om$~be a Cartan connection on~$P$. We shall now show that
$\om$ determines a pair of direction fields in $\R^3$. Consider an arbitrary
(local) section $s\colon \R^3\to P$ and let
\begin{align*}
&E_1=\langle s^*\om_{21},s^*\om_{31}\rangle^\perp,\\
&E_2=\langle s^*\om_{31},s^*\om_{32}\rangle^\perp.
\end{align*}
Any other section $\tilde s$ has the form $\tilde s=s\v$, where $\v$ is a
smooth function from the common domain of $s$ and $\tilde s$ to~$G$.
Then
\begin{equation}\label{22eq1}
\tilde s^*\om=(\Ad\v^{-1})(s^*\om)+\v^*\theta,
\end{equation}
where $\theta$ is the Maurer--Cartan form on~$G$. Simple calculation
shows that the forms $\tilde s^*\om_{21}$ and $\tilde s^*\om_{31}$ are linear
combinations of the forms $s^*\om_{21}$ and $s^*\om_{31}$.
Hence the direction field~$E_1$ is independent of the choice of the
section~$s$. The proof that $E_2$ is also well defined is carried out in a
similar manner.
\begin{defi} Let $(E_1,E_2)$ be a pair of direction fields in space. We say
  that {\em a Cartan connection~$\om$ is associated\/} with the pair
  $(E_1,E_2)$ if $(E_1,E_2)$ coincides with the pair of direction fields
  corresponding to~$\om$.
\end{defi}

The next theorem is the central result of the present section.
\begin{thr}\label{22thr1}
If $(E_1,E_2)$~is an arbitrary nondegenerate pair of direction fields
in~$\R^3$, then up to isomorphism there exist a unique principal fiber bundle
$\pi\colon P\to\R^3$ and a unique Cartan connection~$\om$ on~$P$ satisfying the
following conditions:
\begin{enumerate}[(i)]
\item $\om$ is associated with the pair $(E_1,E_2)$;
\item for all $p\in P$, the structure function $c$ of~$\om$ lies in a
  subspace $W\subset\Lgg$ of the form
$$W=\left\{
e_{21}\land e_{31}\mapsto\left(\begin{smallmatrix}
0&a&b\\0&0&0\\0&0&0\end{smallmatrix}\right),
e_{31}\land e_{32}\mapsto\left(\begin{smallmatrix}
0&0&c\\0&0&d\\0&0&0\end{smallmatrix}\right),
e_{21}\land e_{32}\mapsto0\mid a,b,c,d\in\R\right\}.$$
(Here
$$e_{21}=\left(\begin{smallmatrix}
0&0&0\\1&0&0\\0&0&0\end{smallmatrix}\right)+\g,\quad
e_{31}=\left(\begin{smallmatrix}
0&0&0\\0&0&0\\1&0&0\end{smallmatrix}\right)+\g,\quad
e_{32}=\left(\begin{smallmatrix}
0&0&0\\0&0&0\\0&1&0\end{smallmatrix}\right)+\g$$
is a basis of the space $\gb/\g$).
\end{enumerate}
\end{thr}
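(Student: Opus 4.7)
The plan is to apply Cartan's equivalence method. Near any point of $\R^3$ I pick a local coframe $(\theta^1,\theta^2,\theta^3)$ adapted to the pair: $\theta^3$ cuts out $\widetilde C = E_1\oplus E_2$, while $\theta^1$ annihilates $E_1$ and $\theta^2$ annihilates $E_2$. The pointwise changes of coframe preserving this flag structure are exactly those of the form
$$\tilde\theta^3 = a_{11}\theta^3,\qquad \tilde\theta^1 = a_{22}\theta^1+a_{12}\theta^3,\qquad \tilde\theta^2 = a_{33}\theta^2 + a_{13}\theta^3,$$
with $a_{11}a_{22}a_{33}=1$, and this five-parameter group is precisely $G$. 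Declaring the adapted coframes over each point to be the fiber yields a principal $G$-bundle $\pi\colon P \to \R^3$ on which $\theta^1,\theta^2,\theta^3$ become tautological horizontal forms; identifying them with $\om_{21},\om_{31},\om_{32}$ as the indexing of the theorem dictates, condition~(i) holds by construction.

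Next I complete these to a $\gb$-valued 1-form $\om^{(0)}$ on $P$ by filling in the five non-horizontal entries (the diagonal and strictly upper-triangular ones, subject to trace zero) with any choice of 1-forms agreeing with the Maurer--Cartan form of $G$ along each fiber. Any two such completions differ by horizontal perturbations: each entry may be shifted by an arbitrary combination of $\om_{21},\om_{31},\om_{32}$, parametrised altogether by a section of $\mathrm{Hom}(\gb/\g,\g)$. I then compute the provisional curvature $\Om^{(0)} = d\om^{(0)} + \tfrac12[\om^{(0)},\om^{(0)}]$ and, via (\ref{hare}), read off its structure coefficients $c^k_{ij}$. Condition~(ii) amounts to demanding that every $c^k_{ij}$ outside $W$ vanish.

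The heart of the argument, and the main obstacle, is the Cartan absorption step: one must verify that the forbidden part of the structure function can be killed by a unique choice of horizontal perturbation, i.e.\ that the linear map $\mathrm{Hom}(\gb/\g,\g) \to \Lgg/W$ induced by varying $\om^{(0)}$ is an isomorphism. This is a purely algebraic statement about the action of $\g$ on $\gb$ (the vanishing of the relevant Spencer cohomology modulo $W$), checked by direct expansion of $d\om^{(0)} + \tfrac12[\om^{(0)},\om^{(0)}]$ and matching the coefficients of each $\om^i\wedge\om^j$ against the matrix positions listed by $W$. Nondegeneracy of $(E_1,E_2)$ enters precisely here: it ensures that $d\theta^3$ has a nonzero $\theta^1\wedge\theta^2$ component, which bootstraps the absorption. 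Once the perturbations are fixed, $\om$ is uniquely determined, and axioms $1^\circ$--$3^\circ$ of a Cartan connection follow automatically: $1^\circ$ since $\om^{(0)}$ restricts to the Maurer--Cartan form along the fibers, $2^\circ$ since the normalization conditions are $G$-equivariant (in particular $W$ is $\Ad G$-stable), and $3^\circ$ since the horizontal and normalized non-horizontal forms together span $T^*_pP$ pointwise. Uniqueness of $(P,\om)$ up to isomorphism follows from uniqueness at this normalization step: for any $(P',\om')$ satisfying (i)--(ii), a local section produces an adapted coframe on $\R^3$ and the same normalization reconstructs $\om$, so the two data coincide under the induced bundle isomorphism.
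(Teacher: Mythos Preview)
Your overall strategy---Cartan's equivalence method---is the right one, and it is indeed how such theorems are proved. But the opening move contains a genuine error. The group of coframe changes you describe,
\[
(\theta^1,\theta^2,\theta^3)\;\longmapsto\;(a_{22}\theta^1+a_{12}\theta^3,\ a_{33}\theta^2+a_{13}\theta^3,\ a_{11}\theta^3),
\]
is \emph{not} the Borel subgroup $G\subset SL(3,\R)$. With your constraint $a_{11}a_{22}a_{33}=1$ it is only four-dimensional; without any constraint it is five-dimensional, but its nilradical (the two shifts by $\theta^3$) is abelian, whereas the nilradical of $G$ is the three-dimensional Heisenberg group. Concretely, the adjoint action of $G$ on $\gb/\g$ has a one-dimensional kernel---the unipotent one-parameter subgroup in the $(1,3)$ slot acts trivially on $e_{21},e_{31},e_{32}$---so $G$ simply cannot be realised as a group of coframe changes on a three-manifold. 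Your bundle of adapted coframes is therefore at most seven-dimensional and cannot support an $\mathfrak{sl}(3,\R)$-valued form satisfying axiom~$3^\circ$ of a Cartan connection. What is missing is the \emph{prolongation} step of the equivalence method: after the first normalisation one must enlarge the bundle by the residual (here one-dimensional) gauge freedom before one lands on a genuine principal $G$-bundle.

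The paper avoids this by never constructing a coframe bundle. Since the base is $\R^3$, the bundle may be taken trivial, $P=\R^3\times G$, and one works with a single section $s$ and the pullback $\tilde\omega=s^*\omega$, exploiting the full five-dimensional gauge freedom $s\mapsto s\varphi$, $\varphi\colon\R^3\to G$. After invoking Theorem~\ref{thr1} to put the pair in the standard form $E_1=\langle\partial/\partial z\rangle$, $E_2=\langle\partial/\partial x+z\,\partial/\partial y+f\,\partial/\partial z\rangle$, the paper then performs the absorption \emph{explicitly}: it imposes $\widetilde\Omega_{31}=0$, $\widetilde\Omega_{21}=0$, $\widetilde\Omega_{32}=0$, $\widetilde\Omega_{22}-\widetilde\Omega_{11}=0$, etc., one equation at a time, solving at each stage for the next batch of coefficients of $\tilde\omega$ and simultaneously spending the remaining gauge freedom. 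This computation \emph{is} the proof---it is what verifies that the normalisation map $\mathrm{Hom}(\gb/\g,\g)\to\Lgg/W$ is bijective. Your proposal names this step as ``the main obstacle'' and describes what it should accomplish, but does not do it; so even with the structure-group issue repaired, the argument would still be a plan rather than a proof.
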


\begin{rem} Condition~(ii) is equivalent to the requirement that the curvature
  form of~$\om$ have the form
$$\Om=\begin{pmatrix}
0&a\om_{21}\land\om_{31}&b\om_{21}\land \om_{31}+c\om_{31}\land\om_{32}\\
0&0&d\om_{31}\land\om_{32}\\
0&0&0\end{pmatrix}.$$
\end{rem}

\begin{rem} The space~$W$ is invariant under the action of the Lie group~$G$
  on $\Lgg$. Indeed, if
$$g=\begin{pmatrix}
x&t&v\\0&y&u\\0&0&z
\end{pmatrix}\in G,\quad xyz=1,$$
and $\v\in W$ has the form
$$\v\colon
e_{21}\land e_{31}\mapsto\left(\begin{smallmatrix}
0&a&b\\0&0&0\\0&0&0\end{smallmatrix}\right),
e_{31}\land e_{32}\mapsto\left(\begin{smallmatrix}
0&0&c\\0&0&d\\0&0&0\end{smallmatrix}\right),
e_{21}\land e_{32}\mapsto0,$$
then, by straightforward computation,
\begin{equation}\label{act}
\begin{aligned}
g.\v\colon\quad&e_{21}\land e_{31}\mapsto
\frac{x^3}{y^2z^2}\left(\begin{smallmatrix}
0&az&-au+by\\0&0&0\\0&0&0\end{smallmatrix}\right),\\
&e_{31}\land e_{32}\mapsto
\frac{xy}{z^3}\left(\begin{smallmatrix}
0&0&cx+dt\\0&0&dy\\0&0&0\end{smallmatrix}\right),\\
&e_{21}\land e_{32}\mapsto0.
\end{aligned}
\end{equation}

In particular, $g.\v\in W$.
\end{rem}

\begin{proof} We shall first consider the local case and then the global one.
According to Theorem~\ref{thr1}, we can always assume that locally
$P=\R^3\times G$ is a trivial principal
fiber bundle and that the direction fields $E_1$ and~$E_2$ have the form
$$E_1=\left\langle\frac\p{\p z}\right\rangle,\quad
E_2=\left\langle\frac\p{\p x}+z\frac\p{\p y}+f(x,y,z)\frac\p{\p z}\right\rangle,$$
where $f$~is a smooth function on~$\R^3$.

Let $s\colon \R^3\to P$, $a\mapsto(a,e)$~be the trivial section of the fiber
bundle~$\pi$, and $\tom=s^*\om$~a form on~$\R^3$ with values in~$\gb$. Note
that the form~$\tom$ uniquely determines the form~$\om$. Indeed, for points
of the form $p=(a,e)\in P$ ($a\in\R$), we have
$$(\om_p)|_{T_a\R^3}=\tom_a,\quad
(\om_p)|_{T_e G}=\id_{\g},$$
and from the definition of Cartan connection it follows that for any point $p=(a,g)$,
$$\om_p=(\Ad g)\circ\om_{(a,e)}\circ d_{(a,g)}R_{g^{-1}}.$$

Let $\TOm=s^*\Om$. Then, in just the same way, the form $\TOm$ uniquely
determines the curvature form~$\Om$.

Condition~(i) on the Cartan connection~$\om$ is equivalent to the following
conditions:
\begin{align*}
(\tilde{i})\qquad&\tom_{21}=\a dx+\delta(dy-z\,dx);\\
&\tom_{31}=\gamma(dy-z\,dx);\\
&\tom_{32}=\lambda(dy-z\,dx)+\mu(dz-f\,dx).
\end{align*}
Since the subspace $W\subset\Lgg$ is $G$-invariant, condition~(2) needs to be
verified only for the points $p=s(a)$ for all $a\in\R^3$.
Thus condition~(ii) is equivalent to the condition
$$(\widetilde{ii})\qquad\TOm=\begin{pmatrix}
0&\tilde a\om_{21}\land\om_{31}&\tilde b\om_{21}\land \om_{31}+
\tilde c\om_{31}\land\om_{32}\\
0&0&\tilde d\om_{31}\land\om_{32}\\
0&0&0\end{pmatrix}.$$
Show that there is a unique connection~$\om$ on~$P$ satisfying these
conditions.

Note that the identification $P\equiv\R^3\times G$ is not canonical and that
all such identification are in one-to-one correspondence with the sections
$\tilde s\colon\R^3\to P$ of the form $s'=s\v$, where $\v\colon
\R^3\to G$ is an arbitrary smooth function. Under an identification like this,
the form~$\tom$ becomes
$$\tom'=(s')^*\om=(\Ad\v^{-1})\tom+\v^*\theta,$$
where $\theta$ is the canonical Maurer--Cartan form on~$G$.

With a suitable choice of~$s$, we can always ensure that
\begin{align*}
&\tom_{21}=dx,\\
&\tom_{31}=dy-z\,dx,\\
&\tom_{32}=k(dz-f\,dx).
\end{align*}
This determines~$s$ uniquely up to a transformation of the form $s\mapsto s\v$,
where $\v\colon\R^3\to G_1$ is a transition function with values in the
subgroup
$$G\supset G_1=\left\{\left.\begin{pmatrix}
1&0&x\\0&1&0\\0&0&1\end{pmatrix} \right|\ x\in\R\right\}.$$ Using the condition
$\TOm_{31}=0$, we find that $k=1$ and $\tom_{33}-\tom_{11}=u(dy-z\,dx)$ for
some smooth function~$u$ on~$\R^3$.

The condition $\TOm_{21}=0$ implies that
\begin{align*}
\tom_{22}-\tom_{11}=&v\,dx+t(dy-z\,dx),\\
\tom_{23}=&t\,dx+w(dy-z\,dx),
\end{align*}
where $v,w,t\in C^\infty(\R^3)$. By a suitable choice of the section~$s$, the
number~$t$ can always be made equal to zero, and thus $s$ is uniquely defined.

Now from the condition $\TOm_{32}=0$ we obtain that $v=\frac{\p f}{\p z}$ and
$$\tom_{12}=\frac{\p f}{\p y}dx-u(dz-f\,dx)+\lambda(dy-z\,dx),\quad
\lambda\in C^\infty(\R^3).$$

Furthermore, the equality $\TOm_{22}+\TOm_{33}-2\TOm_{11}=0$ implies that
$u=-\frac12\frac{\p^2f}{\p z^2}$ and that
$$\tom_{13}=\lambda dx-\frac13\frac{\p^2 f}{\p y\p z}dx-\frac16
d\left(\frac{\p^2 f}{\p z^2}\right)+\mu(dy-z\,dx),\quad
\mu\in C^\infty(\R^3).$$

Similarly, from $\TOm_{22}-\TOm_{11}=0$ it follows that
\begin{align*}
&w=\frac16\frac{\p^3 f}{\p z^3},\\
&\lambda=\frac23\frac{\p ^2f}{\p y\p z}-\frac16
\frac d{dx}\left(\frac{\p^2 f}{\p z^2}\right),
\end{align*}
where $\frac d{dx}$ denotes the vector field $\frac\p{\p x}+z\frac\p{\p y}+
f\frac\p{\p z}$.

Thus, if
\begin{equation}\label{22eq2}
\TOm=\begin{pmatrix}
0&*&*\\0&0&*\\0&0&0\end{pmatrix},
\end{equation}
then
\begin{gather*}
\tom_{21}=dx,\quad\tom_{31}=dy-z\,dx,\quad\tom_{32}=dz-f\,dx,\\
\tom_{22}-\tom_{11}=\frac{\p f}{\p z}dx,\quad
\tom_{33}-\tom_{11}=-\frac12\frac{\p^2f}{\p z^2}(dy-z\,dx),\\
\begin{split}
\tom_{12}&=\frac{\p f}{\p y}dx+\frac12\frac{\p^2 f}{\p z^2}(dz-f\,dx)+
\left(\frac23\frac{\p^2f}{\p y\p z}-\frac16\frac d{dx}\frac{\p^2f}{\p z^2}\right)(dy-z\,dx),\\
\tom_{13}&=\left(\frac13\frac{\p^2f}{\p y\p z}-\frac16\frac d{dx}\frac{\p^2f}{\p z^2}
\right)dx-\frac16d\left(\frac{\p^2f}{\p z^2}\right)+\mu(dy-z\,dx),\\
\tom_{23}&=\frac16\frac{\p^3f}{\p z^3}(dy-z\,dx).
\end{split}
\end{gather*}
The only arbitrary coefficient here is~$\mu$.

From~(\ref{22eq2}), taking into account the structure equation of curvature
from Lemma~\ref{pr2}, we can obtain the following conditions:
\begin{gather*}
\tom_{31}\land\TOm_{23}=0,\quad\tom_{31}\land\TOm_{12}=0,\\
\tom_{21}\land\TOm_{12}+\tom_{31}\land\TOm_{13}=0,\quad
\tom_{32}\land\TOm_{23}-\tom_{21}\land\TOm_{12}=0,
\end{gather*}
whence
\begin{align*}
&\TOm_{23}=\tilde d\tom_{31}\land\tom_{32}+\tilde e\tom_{21}\land\tom_{31},\\
&\TOm_{12}=\tilde a\tom_{21}\land\tom_{31}+\tilde e\tom_{31}\land\tom_{32},\\
&\TOm_{13}=\tilde b\tom_{21}\land\tom_{31}+\tilde c\tom_{31}\land\tom_{32}+
\tilde e\tom_{21}\land\tom_{32}
\end{align*}
for some $\tilde a,\tilde b,\tilde c,\tilde d,\tilde e\in C^{\infty}(\R^3)$.

Finally, letting
$$\mu=\frac16\frac{\p^3}{\p y\p z^2}-\frac16\frac{\p f}{\p z}\cdot\frac{\p^3f}
{\p z^3}-\frac16\frac d{dx}\frac{\p^3f}{\p z^3},$$
we can ensure that $e=0$.

\smallskip
We now show that in the global case the fiber bundle $\pi\colon P\to\R^3$ and
the Cartan connection~$\om$ are, too, unique up to isomorphism. For this
purpose we take a covering $\{U_\a\}_{\a\in I}$ of the three-dimensional
space and, using local consideration from the first part of the proof,
construct a Cartan connection $\om_\a$ on each trivial fiber bundle
$\pi_\a\colon U_\a\times G\to U_\a$.

Let $s_\a,s_\b$~be the trivial sections of the fiber bundles $\pi_\a$ and
$\pi_\b$, respectively, and let $\tom_\a=s^*_\a\om_\a$, $\tom_\b=s^*_\b\om_\b$.
Show that for any two disjoint domains $U_\a,U_\b$ there exists a unique
function
$$\v_{\a\b}\colon U_\a\cap U_\b\to G$$
such that
\begin{equation}\label{22eq3}
\tom_\b=(\Ad\v_{\a\b}^{-1})\tom_\a+\v_{\a\b}^*\theta.
\end{equation}
Without loss of generality we can assume that the form $\tom_\a$ has the form
stated above. The form $\tom_\b$ on $U_\a\cap U_\b$ also satisfies
conditions (1), (2) and, therefore, can be made $\tom_\a$ by a suitable choice
of section. But, by the above, this section is uniquely defined.
Thus there exists a unique function $\v_{\a\b}\colon U_\a\cap U_\b\to G$
satisfying~(\ref{22eq3}).

In its turn, the family of functions $\{\v_{\a\b}\}_{\a,\b\in I}$ uniquely (up
to isomorphism) determines a principal fiber bundle $\pi\colon P\to \R^3$ with
structural group~$G$ and a Cartan connection~$\om$ on~$P$.
\end{proof}

Theorem~\ref{22thr1} implies that there is a one-to-one correspondence between
second-order differential equations and the Cartan connections~$\om$ on
principal fiber bundles $\pi\colon P\to J^1(\R^2)$ with structural
group~$G$ satisfying condition~(ii). In particular, two second-order equations
are (locally) equivalent if and only if so are the corresponding Cartan
connections.

Explicit calculations show that the functions $\tilde a$ and $\tilde d$ for the
constructed form~$\tom$ on $\R^3$ have the form
\begin{align*}
\tilde a&=-f_{yy}+\frac12ff_{yzz}+\frac12f_yf_{zz}+\frac23f_{xyz}-\frac16f_{xxzz}-\\
 &-\frac13zf_{xyzz}-\frac16f_xf_{zzz}-\frac13ff_{xzzz}+\frac23zf_{yyz}-\\
 &-\frac16z^2f_{yyzz}-\frac16zf_yf_{zzz}-\frac13zff_{yzzz}-\frac23f_zf_{yz}+\\
 &+\frac16f_zf_{xzz}+\frac16zf_zf_{yzz}-\frac16f^2f_{zzzz};\\[2mm]
\tilde d&=-\frac16f_{zzzz}.
\end{align*}
Further, from the structure equation $d\Om=[\Om,\om]$ it follows that
$$ \tilde b=\frac{\p \tilde a}{\p z},\quad \tilde c=-\frac{d}{dx}(\tilde
d)-2f_z\tilde d.$$
In particular, the conditions $\tilde a=0$ and $\tilde d=0$ imply that $\tilde b=0$ and
$\tilde c=0$, respectively.

From formulas~(\ref{act}) of the action of the Lie group~$G$ on the space~$W$,
in which the structure function takes its values, it follows that the
equalities $\tilde a=0$ and $\tilde d=0$ have invariant nature. To be precise, the subspaces
$W_1$ and~$W_2$ in~$W$ defined by the equalities $a=b=0$ and $c=d=0$,
respectively, are also $G$-invariant. In particular, the classes of
second-order equations described by these equalities are stable under the
diffeomorphism group of the plane.

The condition $\tilde c=\tilde d=0$ is equivalent to the condition that the corresponding
differential equation has the form
\begin{equation}\label{ekub}
y''=A(y')^3+B(y')^2+Cy'+D,
\end{equation}
where $A,B,C,D$ are functions of $x,y$.

The condition $\tilde a=\tilde b=0$ implies that the dual differential equation has the
form~(\ref{ekub}). Later on we shall show that the whole structure function is
zero if and only if that the corresponding differential equation is locally
equivalent to the equation $y''=0$. Thus,
\begin{thr}
  A second-order differential equation is equivalent to the equation $y''=0$ if
  and only if both the equation itself and its dual have the form~(\ref{ekub}).
\end{thr}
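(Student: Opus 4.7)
My plan is to read the statement entirely through the associated Cartan connection $\om$ of Theorem~\ref{22thr1} and to reduce the claim to the flatness criterion discussed in the example at the end of Section~2.5. First I would observe that the hypothesis of the theorem is simply that all four scalar invariants $\tilde a, \tilde b, \tilde c, \tilde d$ of $\om$ vanish identically. Indeed, the paper already records that $\tilde c = \tilde d = 0$ characterises equations of the form~(\ref{ekub}), while $\tilde a = \tilde b = 0$ characterises dual equations of this form. Since $(\tilde a, \tilde b, \tilde c, \tilde d)$ are precisely the coordinates of the structure function of $\om$ in the basis of $W$ chosen in Theorem~\ref{22thr1}, their simultaneous vanishing is equivalent to $c\equiv 0$, and hence, via the matrix expression for $\Om$ in the remark following Theorem~\ref{22thr1}, to $\Om\equiv 0$ on all of $P$.

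Next I would invoke the flatness example from Section~2.5: a Cartan connection of zero curvature is locally isomorphic to the canonical Cartan connection of $(\G,M_0)$ (the Frobenius/Maurer--Cartan argument). Thus $\Om\equiv 0$ singles out, among all Cartan connections produced by Theorem~\ref{22thr1}, a single local isomorphism class. Plugging $f\equiv 0$ into the explicit expressions for $\tilde a, \tilde b, \tilde c, \tilde d$ displayed immediately above the theorem gives $\tilde a=\tilde b=\tilde c=\tilde d=0$, so the Cartan connection attached to the equation $y''=0$ is itself flat and belongs to this distinguished class.

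To conclude, I would translate the local isomorphism of Cartan connections back into local equivalence of the underlying second-order equations. An isomorphism of principal $G$-bundles intertwining two Cartan connections of the form produced by Theorem~\ref{22thr1} carries the associated pair $(E_1,E_2)$ in $\R^3$ to the corresponding pair for the other equation, because the fields $E_i$ are read off $\om$ by an intrinsic formula; Theorem~\ref{thr2} then upgrades this equivalence of nondegenerate pairs in space to an equivalence of second-order equations in the plane. Running the argument in both directions, ``both the equation and its dual have the form~(\ref{ekub})'' is equivalent to $\Om\equiv 0$, which is equivalent to local isomorphism of the associated Cartan connection with that of $y''=0$, which in turn is equivalent to local equivalence of the ODE with $y''=0$.

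The main obstacle is the flatness step and its careful converse. Showing that vanishing of the four scalar invariants along one section forces $\Om$ to vanish everywhere on $P$ uses the equivariance of the structure function (Lemma~\ref{pr2}, $1^\circ$) together with the $G$-invariance of $W$ established in the second remark after Theorem~\ref{22thr1}; and one must verify that the Frobenius argument underlying the flatness example yields not merely a diffeomorphism identifying the absolute parallelisms but a genuine isomorphism of principal $G$-bundles, for otherwise one cannot descend to an equivalence of the underlying direction-field pairs on $\R^3$.
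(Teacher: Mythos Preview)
Your proposal is correct and follows essentially the same route as the paper: reduce ``equation and dual both of form~(\ref{ekub})'' to the vanishing of all four coordinates $\tilde a,\tilde b,\tilde c,\tilde d$ of the structure function, hence to $\Om\equiv0$, and then identify flat Cartan connections with the canonical one attached to $y''=0$. The only organisational difference is that the paper postpones the last equivalence (structure function identically zero $\Leftrightarrow$ locally equivalent to $y''=0$) to the later Theorem~6, whereas you invoke the flatness example of Section~2.5 directly; both arguments rest on the same Frobenius/Maurer--Cartan reasoning.
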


The main results of this section are due to \`E.~Cartan~\cite{car}. Modern
versions of these results from slightly different points of view can be found
 in \cite{fels,gat,gl,gris,olver2,rom}.

\section{Projective connections in the plane and 
equations of degree~3 with respect to~$y'$}

As shown in the previous section, the class of second-order equations of the form
\begin{equation}\label{e23}
y''=A(y')^3+B(y')^2+Cy'+D
\end{equation}
is stable under the diffeomorphism group of the plane. In this section we show
that every equation of this form may be naturally associated with some
projective Cartan connection on the plane.

Let $M_0=\R P^3$, and let $\G=SL(3,\R)$ be the group of projective
transformations of~$M_0$. Fix the point $o=[1:0:0]$ in~$M_0$. The stationary
subgroup $G=\G_o$ has the form
$$G=\left\{\left.\begin{pmatrix} (\det A)^{-1}&B\\0&A\end{pmatrix}\right|
\,A\in GL(2,\R), B\in \Mat_{1\times2}(\R)\right\},$$
and the corresponding pair $(\gb,\g)$ of Lie algebras has the form
$$\gb=\mathfrak{sl}(3,\R), \g=\left\{\left.\begin{pmatrix} -\tr A&B\\0& A\end{pmatrix}
\right|\,A\in \mathfrak{gl}(2,\R), B\in \Mat_{1\times 2}(\R)\right\}.$$

\begin{defi}
A {\it projective connection\/} in the plane is a Cartan connection with the
model $(\G,M_0)$.
\end{defi}

Any differential form~$\om$ with values in the Lie algebra~$\gb$ may be
expressed uniquely in the form
$$\om=\begin{pmatrix} -\om_1^1-\om^2_2 &\om_1&\om_2\\ \om^1&\om_1^1&\om_2^1\\ 
\om^2&\om_1^2&\om_2^2\end{pmatrix}.$$ 
Suppose $\pi\colon P\to \R^2$ is a principal fiber bundle with structural
group~$G$, and $\om$ is a Cartan connections on~$P$, and let $(x,y)$ be a
coordinate system in the plane.

\begin{lem}
Locally, in the neighborhood of any point $(x,y)\in \R^2$ there exists a unique
section $s\colon\R^2\to P$ such that
$$s^*\om=\begin{pmatrix}  0 & * & * \\ dx & * & * \\ dy & * & * \end{pmatrix}.$$
\end{lem}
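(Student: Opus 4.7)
The plan is to obtain the required section as a unique gauge transform of an arbitrary initial choice. Fix any local section $s_0\colon U\to P$; every other section on $U$ has the form $s=s_0\varphi$ with $\varphi\colon U\to G$ smooth, and by~(\ref{tr:f}),
$$s^*\om = \Ad(\varphi^{-1})\,s_0^*\om + \varphi^*\theta.$$
The claim reduces to showing that exactly one such $\varphi$ brings the first column of $s^*\om$ to $(0,dx,dy)^T$. I split $\varphi = \varphi_A\varphi_B$ along the Levi decomposition of $G$, with
$$\varphi_A = \begin{pmatrix}(\det A)^{-1} & 0 \\ 0 & A\end{pmatrix}, \qquad
\varphi_B = \begin{pmatrix} 1 & B \\ 0 & I\end{pmatrix},$$
and handle the two factors in order: first using $\varphi_A$ to normalize $\om^1$ and $\om^2$, then using $\varphi_B$ to kill the trace $\om_1^1+\om_2^2$.

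For the first step, note that the bottom-left column of $\om$ represents the quotient $\gb/\g$, on which $\g$-valued contributions, in particular the Maurer--Cartan term $\varphi^*\theta$, act trivially. A direct block calculation therefore gives
$$\begin{pmatrix} s^*\om^1 \\ s^*\om^2\end{pmatrix} = (\det A)^{-1} A^{-1} \begin{pmatrix} s_0^*\om^1 \\ s_0^*\om^2\end{pmatrix}.$$
Since $\om$ is a Cartan connection, $(s_0^*\om^1, s_0^*\om^2)$ is a local coframe, so there is a unique smooth matrix $M\colon U\to GL(2,\R)$ with $(s_0^*\om^1, s_0^*\om^2)^T = M\,(dx,dy)^T$. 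The normalization condition $(\det A)^{-1}A^{-1}M = I$ rearranges to $(\det A)\,A = M$; taking determinants gives $(\det A)^3 = \det M$, which admits a unique real solution, after which $A = M/\det A$ is determined uniquely.

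With $\om^1$ and $\om^2$ normalized, I adjust $\varphi_B$. Conjugation by $\varphi_B$ preserves the bottom-left column and sends the lower-right $2\times 2$ block $D$ to $D + vB$, where $v = (dx,dy)^T$; moreover $\varphi_B^*\theta$ contributes nothing to that block. Taking traces,
$$s^*(\om_1^1+\om_2^2) \longmapsto s^*(\om_1^1+\om_2^2) + B_1\,dx + B_2\,dy,$$
so there is a unique $(B_1,B_2)$ making the sum vanish. Uniqueness of the final section follows by running the same two computations on the gauge difference of any pair of solutions: $(\det A)^3=1$ in $\R$ forces $\det A=1$, then $(\det A)\,A=I$ forces $A=I$, and then the linear equation $B\,(dx,dy)^T=0$ forces $B=0$. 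The only point requiring care is the bookkeeping of how each factor acts on each block of the $\mathfrak{sl}(3,\R)$-matrix; no analytic obstacle arises, since both the cube root in $\R$ and the linear system for $B$ admit unique real solutions.
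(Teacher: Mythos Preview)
Your proof is correct and follows essentially the same two-step strategy as the paper: first use the reductive part of $G$ (your $\varphi_A$) to normalize $(\om^1,\om^2)$ to $(dx,dy)$, then use the unipotent radical $G_1=\{\varphi_B\}$ to kill $\om_1^1+\om_2^2$. The paper writes the first gauge element down explicitly (taking $A$ to be the coefficient matrix of $(s_0^*\om^1,s_0^*\om^2)$ in $(dx,dy)$ and rescaling by $\lambda=(\det A)^{-1/3}$), whereas you set up and solve the equation $(\det A)A=M$; these are equivalent, and your treatment of the second step and of uniqueness is in fact more detailed than the paper's, which leaves that part as ``similarly, it is easy to show.''
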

\begin{proof}
Let $s$ be an arbitrary local section. Then $s$ is defined up to a
transformation $s\to s\v$, where $\v\colon\R^2\to G$ is a smooth transition
function. If
$$(s^*\om^1)=\a\d x+\b\d y,$$
$$(s^*\om^2)=\gamma\d x+\delta\d y,$$
then, choosing $\v$ to have the form
$$\v\colon(x,y)\mapsto\begin{pmatrix} \l^{-2}(\det A)^{-1}&0\\ 0 &\l A\end{pmatrix},$$
where
$$A=\begin{pmatrix} \a&\b\\ \gamma&\delta\end{pmatrix}, \l=(\det A)^{-1/3},$$
we get
$$s^*\om=\begin{pmatrix} {}* & * & *\\ dx & * & *\\ dy & * & *\end{pmatrix}.$$
Then the section $s$ is defined uniquely up to transformations of the form
$s\mapsto s\v$, where $\v\colon\R^2\to G_1$ takes values in the subgroup
$$G\supset G_1=\left\{\left.\begin{pmatrix} 1&B\\ 0&E_2\end{pmatrix}\right|\, 
B\in \Mat_{1\times 2}(\R)\right\}.$$
Similarly, it is easy to show that there is a unique function~$\v$ such that
the section $s\v$ has the desired form.
\end{proof}

Recall that a {\it geodesic\/} of a projective connection~$\om$ is a curve in
the plane whose development is a segment of a straight line in $\R P^2$.
Similarly, a {\it geodesic submanifold\/} is a one-dimensional submanifold in
the plane whose development is a segment of a straight line in~$\R P^2$
irrespective of the parametrization. Suppose that a submanifold~$L$ in the
plane is the graph of some function~$y(x)$. Then $L$ may be parametrized like
this: 
$$t\mapsto x(t)=(t,y(t)), \quad t\in\R.$$
We shall now determine when $L$ is a geodesic submanifold.

Consider the section $s\colon\R^2\to P$ satisfying the conditions of the above
lemma, and let $\tom=s^*\om$. Define a curve~$X(t)$ in the Lie algebra $\gb$ as
$$X(t)=\tom(\dot x(t))=
\begin{pmatrix} 0 & \tom_1(\dot x)&\tom_2(\dot x)\\ 
1  &\tom_1^1(\dot x)&\tom_2^1(\dot x)\\ 
y' &\tom_1^2(\dot x)&\tom_2^2(\dot x)\\
\end{pmatrix}.$$
Then the development of the curve $x(t)$ has the form $\tilde x(t)= h(t).o$, where
$h(t)$ is the curve in $SL(3,\R)$ such that
$$\dot h(t)=h(t)X(t),\quad h(0)=E_3.$$
Passing to the section $s'=s\v$, where
$$\v\colon(x,y)\mapsto \begin{pmatrix} 1&0&0\\0&1&0\\ 0&y'(x)&1\end{pmatrix},$$
we get
$$\tilde \om'=(s')^*\om=(\Ad \v^{-1})\tom+\v^{-1}\d \v.$$
Straightforward computation shows that
$$X'(t)=\tom'(\dot x(t))=
\begin{pmatrix} 0&*&*\\ 1&*&*\\
0 & y''+(\tom_1^2+y'(\tom_2^2-\tom_1^1)-(y')^2\tom_2^1)
& *\end{pmatrix}.$$
It follows that the tangent vector to the development $\tilde x(t)$ at the
point~$o$ in non-homogeneous coordinates is equal to $(1,0)$. 
But in $\R P^2$ there exists a unique straight
line~$l$ through~$o$ with tangent vector $(1,0)$, namely $l=\{[x:y:0]\}$.
Hence a necessary and sufficient condition for $L$ to be a geodesic submanifold
is that $\tilde x(t)\in l$ for all $t\in \R$. This condition is equivalent to
the requirement that $h(t)$ lie in the subgroup $H\subset \G$ preserving the
straight line~$l$:
$$H=\left\{\left.\begin{pmatrix} a&b&c\\d&e&f\\0&0&g\end{pmatrix}\right|\,
g(ae-bd)=1\right\}.$$
But this is possible if and only if the curve $X'(t)$ lies in the Lie algebra
of~$H$, which has the form
$$\h=\left\{\left.\begin{pmatrix} a&b&c\\d&e&f\\0&0&g\end{pmatrix}\right|\,
a+e+g=0\right\}.$$
Thus every geodesic submanifold satisfies the equation
\begin{equation}\label{nonexp}
y''=-(\tom_1^2+y'(\tom_2^2-\tom_1^1)-(y')^2\tom_2^1)
(\dot x).
\end{equation}
Assume that
\begin{align*}
&\tom_1^1=-\tom_2^2=\a_1^1\d x+\b_1^1\d y,\\
&\tom_2^1=\a_2^1\d x+\b_2^1\d y,\\
&\tom_1^2=\a_1^2\d x+\b_1^2\d y.
\end{align*}
Then equation~(\ref{nonexp}) can be written explicitly as
$$y''=\b_2^1(y')^3+(2\b_1^1+\a_2^1)(y')^2+(2\a_1^1-\b_1^2)y'-\a_1^2.$$

Any second-order equation of the form~(\ref{e23}) can therefore be interpreted
as an equation for geodesic submanifolds of some projective connection in the
plane. 

We now formulate the main result of this section.
\begin{thr}
Given a second-order differential equation of the form~(\ref{e23}), there exists
a unique (up to isomorphism) principal fiber bundle $\pi\colon P\to \R^2$ with
structural group~$G$ and a unique Cartan connection~$\om$ on~$P$ satisfying the
following conditions:
\begin{enumerate}[(i)]
\item the geodesics of~$\om$ satisfy the equation~(\ref{e23});
\item the structure function $c\colon P\to\Lgg$ takes values in the 
  subspace 
\end{enumerate}
$$W=\left\{e_1\land e_2\mapsto \left(\begin{smallmatrix}
\ 0&a&b\\0&0&0\\ 0&0&0\end{smallmatrix}\right)
a,b\in \R\right\}.$$
(Here $e_1=\left(\begin{smallmatrix} 0&0&0\\1&0&0\\0&0&0\end{smallmatrix}\right)+\g,$
$e_2=\left(\begin{smallmatrix} 0&0&0\\0&0&0\\1&0&0\end{smallmatrix}\right)+\g$
is a basis of the quotient space $\gb/\g$).
\end{thr}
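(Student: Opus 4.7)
The plan is to mirror the proof of Theorem~\ref{22thr1}: first settle the local case on a trivial bundle $P = \R^2 \times G$, then glue over a covering of $\R^2$ by canonical transition functions. For the local step, the preceding lemma supplies a section $s\colon\R^2\to P$, unique up to right multiplication by some $\v\colon \R^2 \to G_1$, such that $\tom = s^*\om$ satisfies $\tom^1 = dx$ and $\tom^2 = dy$. Writing each remaining component as $\tom_j^i = \a_j^i\,dx + \b_j^i\,dy$ and $\tom_i = \a_i\,dx + \b_i\,dy$, the explicit geodesic formula derived just above the theorem reduces condition~(i) to the four linear relations
\[
\b_2^1 = A,\quad 2\b_1^1 + \a_2^1 = B,\quad 2\a_1^1 - \b_1^2 = C,\quad \a_1^2 = -D.
\]

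Next I would expand the six components of $\TOm = s^*\Om$ that must vanish under condition~(ii), namely $\TOm^1,\TOm^2,\TOm_1^1,\TOm_2^2,\TOm_2^1,\TOm_1^2$, using the structure equation $\Om = d\om + \om\wedge\om$ together with $d(dx) = d(dy) = 0$. Each reduces to a single scalar equation on $\R^2$. I would work through them in a controlled order: $\TOm^1 = \TOm^2 = 0$ first, which together with (i) pin down the coefficients of $\tom_1^1, \tom_2^2, \tom_1^2, \tom_2^1$; then the trace-type conditions $\TOm_1^1 = \TOm_2^2 = 0$; and finally $\TOm_2^1 = \TOm_1^2 = 0$, which involve the as-yet undetermined coefficients of $\tom_1, \tom_2$. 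After these ten scalar conditions (four from (i), six from (ii)) are imposed, precisely two coefficients among those of $\tom_1,\tom_2$ remain free; these are eliminated by the two-parameter residual $G_1$-freedom in $s$, so that $s$, and hence $\om$, becomes locally unique.

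The Bianchi identity $d\Om = [\Om,\om]$ guarantees consistency of the six vanishing conditions and, once they hold, forces the surviving components to take the form $\TOm_1 = a\,dx\wedge dy$, $\TOm_2 = b\,dx\wedge dy$ for some $a,b \in C^\infty(\R^2)$, so that the structure function indeed takes values in~$W$. The global case is handled exactly as at the end of the proof of Theorem~\ref{22thr1}: cover $\R^2$ by open sets $U_\a$, construct $\om_\a$ on each $U_\a\times G$, and invoke local uniqueness to obtain a unique transition function $\v_{\a\b}\colon U_\a\cap U_\b\to G$ satisfying $\tom_\b = \Ad(\v_{\a\b}^{-1})\tom_\a + \v_{\a\b}^*\theta$; the cocycle condition $\v_{\a\gamma} = \v_{\a\b}\v_{\b\gamma}$ is automatic from uniqueness. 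I expect the main obstacle to be the bookkeeping in the second paragraph: ordering the six curvature conditions so that each determines a genuinely new coefficient rather than an identity, and verifying that the two coefficients still free after imposing (i) and (ii) are exactly those that can be removed by the two-dimensional $G_1$-gauge freedom.
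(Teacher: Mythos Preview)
Your overall plan matches the paper's proof: normalize the section via the lemma, impose the geodesic condition together with the six curvature-vanishing conditions, solve for the remaining coefficients, then glue. But the bookkeeping in your second paragraph is off in one specific place. The lemma as stated gives a \emph{unique} section: besides $\tom^1=dx$ and $\tom^2=dy$, it also forces the top-left entry to vanish, i.e.\ $\tom_1^1+\tom_2^2=0$. So there is no residual $G_1$-freedom left to spend. The explicit geodesic relations you wrote, $2\b_1^1+\a_2^1=B$ and $2\a_1^1-\b_1^2=C$, are themselves derived in the paper under the assumption $\tom_1^1=-\tom_2^2$, so you are implicitly already using the full normalization while simultaneously claiming it is still available.

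With $\tom_1^1+\tom_2^2=0$ imposed, the count is clean: ten unknown coefficients (six for $\tom_1^1,\tom_2^1,\tom_1^2$ and four for $\tom_1,\tom_2$) against ten scalar equations (four from the geodesic condition, two from $\TOm^1=\TOm^2=0$, and four from $\TOm_j^i=0$, $i,j=1,2$). The first six determine the middle block, and the last four solve uniquely for the coefficients of $\tom_1,\tom_2$; nothing is left free. This is exactly how the paper proceeds. Your claim that ``two coefficients among those of $\tom_1,\tom_2$ remain free'' is the error: even had you kept the $G_1$-freedom by dropping the trace condition, a $G_1$-gauge transformation shifts $\tom_1^1+\tom_2^2$ by $b_1\,dx+b_2\,dy$, so the indeterminacy would sit in the middle block, not in $\tom_1,\tom_2$. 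The Bianchi remark is also unnecessary here: $\TOm_1$ and $\TOm_2$ are $2$-forms on $\R^2$, hence automatically multiples of $dx\wedge dy$.
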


\begin{rem}
  It is easy to show that the subspace~$W$ is invariant under the action of~$G$
  on~$\Lgg$.
\end{rem}

\begin{proof}
The proof of this theorem is very similar to that of Theorem~\ref{22thr1},
Section~\ref{sec5}. We shall only construct the $\gb$-valued form $\tom=s^*\om$
on~$\R^2$. We can assume, without loss of generality, that 
\begin{align*}
&\tom^1=\d x,\\
&\tom^2=\d y,\\
&\tom_1^1+\tom_2^2=0.
\end{align*}
Suppose
\begin{align*}
&\tom_1^1=-\tom_2^2=\a_1^1\d x+\b_1^1\d y,\\
&\tom_2^1=\a_2^1\d x+\b_2^1\d y,\\
&\tom_1^2=\a_1^2\d x+\b_1^2\d y.
\end{align*}
Then condition~(i) together with the equations $\TOm^1=
\TOm^2=0$ gives the following system of equations for the coefficients 
$\a_j^i,\b_j^i$ ($i,j=1,2$):
$$\begin{cases}
\b_2^1=A,\\
2\b_1^1+\a_2^1=B,\\
2\a_1^1-\b_1^2=C,\\
-\a_1^2=D;
\end{cases}
\quad
\begin{cases}
\b_1^1=\a_2^1,\\
\b_1^2=-\a_1^1.
\end{cases}$$
It immediately follows that
\begin{align*}
&\tom_1^1=-\tom_2^2=\frac13(C\,dx+B\,dy),\\
&\tom_2^1=\frac13B\,dx+A\,dy,\\
&\tom_1^2=-D\,dx-\frac13C\,dy.
\end{align*}
The conditions $\TOm_j^i=0$ ($i,j=1,2$) uniquely determine the forms
\begin{align*}
&\tom_1=\left(\frac{\p D}{\p y}-\frac13\frac{\p C}{\p x}-\frac23BD
+\frac29C^2\right)\d x+
\left(\frac13\frac{\p C}{\p y}-\frac13\frac{\p B}{\p x}
+\frac19B^2-AD\right)\d y,\\
&\tom_2=\left(\frac13\frac{\p C}{\p y}-\frac13\frac{\p B}{\p x}
+\frac19BC-AD\right)\d x+
\left(\frac13\frac{\p B}{\p y}-\frac{\p A}{\p x}+\frac29B^2-\frac23AC\right)\d y.
\end{align*}
\end{proof}

\section{Invariants of Cartan connections}
\subsection{Absolute parallelism}

Cartan connections constitute a special case of a more general concept, the
concept of {\it absolute parallelism.\/}

\begin{defi} An
  {\it absolute parallelism\/} on a manifold~$M$ is an ordered set
  of vector fields (a \emph{frame}) $\{X_1,\dots,X_m\}$ ($m=\dim M$)
  that form a basis of the tangent space $T_xM$ at each point $x\in M$.
\end{defi}

To any set $\{X_1,\dots,X_m\}$ of this kind we can assign a set of differential
1-forms (a {\em coframe}) $\{\om_1,\dots,\om_m\}$, uniquely defined by the
condition $\om_i(X_j)=\delta_{ij}$ for all $i,j=1,\dots,m$. Then at each point 
$x\in M$, the forms $\{\om_1,\dots,\om_m\}$ form a basis of the dual space
$T_x^*M$.

\begin{ex}
Let $\pi\colon P\to M$ be a principal fiber bundle with structural group~$G$,
and $\om\colon TP\to \gb$ a certain Cartan connection. If
$\{e_1,\dots,e_{n+m}\}$ is an arbitrary basis of $\gb$, the corresponding
fundamental vector fields $\{e_1^*,\dots,e_{n+m}^*\}$ define an absolute
parallelism on~$P$. The dual coframe is made up 
precisely of the forms $\om_1,\dots,\om_{n+m}$ in the decomposition
$$\om=\om_1e_1+\dots+\om_{n+m}e_{n+m}.$$
\end{ex}

Let $\{X_1,\dots,X_m\}$ be an absolute parallelism on~$M$ with dual
coframe $\{\om_1,\dots,\om_m\}$. The functions $c_{ij}^k$ on~$M$ defined by 
$$[X_i,X_j]=-\sum_{k=1}^m c_{ij}^k X_k$$
are called the {\it structure functions\/} of the structure $\{
X_1,\dots,X_m\}$. These functions can also be found from the equalities
$$\d \om_k=\sum_{1\le i<j\le m} c_{ij}^k \om_i\land\om_j.$$

If $\{\TX_1,\dots,\TX_m\}$ is an absolute parallelism on a
manifold~$\widetilde M$ and $\v\colon  M\to \widetilde M$ is a local
diffeomorphism such that $\d\v(X_i)=\TX_i$ for all $i=1,\dots,m$, then
obviously $c_{ij}^k=\tilde c_{ij}^k\circ\v$ for all $i,j,k=1,\dots,m$. Hence
the structure functions $c_{ij}^k$ are {\it invariants\/} of the frame
$\{X_1,\dots,X_m\}$.

New invariants of the frame $\{X_1,\dots,X_m\}$ may be derived from the
structure functions by means of {\it covariant differentiation\/}. The {\em
  covariant derivative\/} of a scalar (or even vector-valued) function~$f$
along $X_i$ ($i=1,\dots,m$) is defined to be $X_if$. It is clear that if $f$ is
an invariant of the frame $\{X_1,\dots,X_m\}$, then so is the function 
for all $i=1,\dots,m$. The covariant derivatives of a function~$f$ can also be
defined with the help of the differential forms $\{\om_1,\dots,\om_m\}$.
Indeed, the differential 1-form $df$ can be uniquely expressed in the form
$$\d f=f_1\om_1+\dots +f_n\om_n.$$
Then the coefficient $f_i=\d f(X_i)=X_i f$ is precisely the covariant
derivative of~$f$ along~$X_i$.

We shall call the structure functions $c_{ij}^k$ the {\em first-order
  invariants} of the structure $\{X_1,\dots,X_m\}$, and assume that every
application of the operation of covariant differentiation increases the order
of the invariant by~1. Thus the most general invariant of order $s+1$ of the
frame $\{X_1,\dots,X_m\}$ has the form
$$X_{i_1}X_{i_2}\dots X_{i_s}(c_{ij}^k),\ 1\le i<j\le m;\
i_1,\dots,i_s,k=1,\dots,m.$$ 
Let us denote this invariant by $f_{\alpha}$ where $\alpha=(i_1\dots i_s[ijk])$
is a multiindex, and $|\alpha|=s+1$.

Suppose $f$ is an invariant of order~$s$. Observe that, although two vector
fields $X_i$ and $X_j$ $(1\le i<j\le m)$, generally speaking, do not commute,
the invariants of the form $X_i(X_jf)$ and $X_j(X_if)$ $(1\le i<j\le m)$
are always dependent modulo the invariants of order $s+1$:
$$X_i(X_jf)-X_j(X_if)=[X_i,X_j]f=\sum_{k=1}^m c_{ij}^k(X_kf).$$

\begin{defi} The maximal number of functionally independent invariants of an
  absolute parallelism is called the \emph{rank} of this
  structure. 
\end{defi}

If $\{X_1,\dots,X_m\}$ is an absolute parallelism and if $r_s$ is
the maximal number of its functionally independent invariants of 
order~$\le s$, then the sequence $r_1,r_2,\dots$ is obviously non-decreasing
and bounded by the dimension of~$M$. Let $N$ be the smallest natural number
such that $r_N=r_{N+1}$. It turns out that the rank of the absolute parallelism
$\{X_1,\dots,X_m\}$ is equal precisely to~$r_N$, and the number~$N$ is said to
be the order of this structure. Let $\{f_1,\dots,f_r\}$ be a set of
functionally independent invariants of order $\le N$. Then, locally, any other
invariant~$f_\a$ may be expressed uniquely in the form
$$f_\a=F_\a(f_1,\dots,f_r).$$
It can be shown that an absolute parallelism is uniquely (up to
local equivalence) determined by the functions~$F_\a$ corresponding to all
invariants~$f_\a$ of order $\le N+1$.

Using more invariant language, this can be described as follows. We define a
{\em structure mapping of order~$s$} to be the mapping
$$C^{(s)}\colon M\to\R^{d(s)},~d(s)=\frac{m^2(m^s-1)}2$$
whose components are precisely all invariants of order~$\le s$.

Then $r_s$ is precisely the dimension of the image of the mapping~$C^{(s)}$ at
its regular points. The absolute parallelism of order~$N$ is then
uniquely (up to local equivalence) determined by the image of~$C^{(N+1)}$.

\begin{defi}
  A local diffeomorphism~$\phi$ of the manifold~$M$ is called a {\em local
  automorphism\/} of the frame $\{X_1,\dots,X_m\}$ if $\phi_*X_i=X_i$ for all
  $i=1,\dots,m$.

  An {\em infinitesimal automorphism\/} of the frame $\{X_1,\dots,X_m\}$ is a
  vector field~$Y$ on~$M$ such that the local one-parameter group of
  diffeomorphisms generated by this field consists of local automorphisms of the
  frame.
\end{defi}

It is obvious that for~$Y$ to be an infinitesimal symmetry of the frame
$\{X_1,\dots,X_m\}$, it is necessary and sufficient that $[Y,X_i]=0$ for all
$i=1,\dots,m$.

It should also be pointed out that the automorphism group of the 
absolute parallelism of rank~$r$ is a Lie group of dimension~$m-r$.
Its action on $M$ is free and the
orbits of this action are precisely the inverse images of the points of 
$C^{(N)}(M)$ under the structure mapping $C^{(N)}$.

The set of all infinitesimal symmetries of the frame 
$\E=\{X_1,\dots,X_m\}$ forms a finite-dimensional Lie algebra, denoted
$\sym(\E)$, whose dimension is equal precisely to $m-r$, where $r$ denotes the
rank of~$\E$. Moreover, the restriction of the frame $\{X_1,\dots,X_m\}$ to any
of orbits of the symmetry group forms a Lie algebra isomorphic 
with the symmetry algebra of our absolute parallelism.

For complete proofs and detail see \cite{olver2,stern}.

\begin{lem}
  If $f$ is an invariant of the frame $\E=\{X_1,\dots,X_m\}$ and
  $Y\in\sym(\E)$, then $Yf=0$.
\end{lem}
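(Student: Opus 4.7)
The plan is to unpack the definition of ``invariant'' in terms of local automorphisms and then use the fact that an infinitesimal symmetry generates a one-parameter group of such automorphisms.

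First, I would recall what invariance means in this setting. The discussion preceding the lemma shows that if $\v\colon M\to\widetilde M$ is a local diffeomorphism with $d\v(X_i)=\widetilde X_i$, then the corresponding structure functions satisfy $c_{ij}^k=\tilde c_{ij}^k\circ\v$, and higher-order invariants (covariant derivatives) transform in the same way. In particular, when $\v$ is a local automorphism of the frame $\E$ itself (so $\widetilde M=M$ and $\widetilde X_i=X_i$), every invariant~$f$ satisfies $f=f\circ\v$, i.e.\ $\v^*f=f$.

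Next I would apply this to the flow of~$Y$. By definition of $\sym(\E)$, the local one-parameter group $\{\phi_t\}$ generated by~$Y$ consists of local automorphisms of~$\E$ (equivalently, $[Y,X_i]=0$ for all~$i$, so each $\phi_t$ preserves the frame). By the previous step, $\phi_t^*f=f$ for all sufficiently small~$t$.

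Finally, differentiating the identity $\phi_t^*f=f$ at $t=0$ gives
$$
Yf=\mathcal{L}_Y f=\frac{d}{dt}\bigg|_{t=0}\phi_t^*f=0,
$$
which is the desired conclusion. The only non-routine point is the first step: one has to be comfortable that the notion of ``invariant,'' as used in the lemma, really does include invariance under the pullback by any local automorphism (and not merely the structure functions themselves, or invariants on some external parameter space). Once that is granted, the rest is the standard passage from finite invariance under a flow to annihilation by the generating vector field.
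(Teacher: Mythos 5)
Your proof is correct, but it takes a genuinely different route from the paper's. The paper argues purely infinitesimally and by induction on the order of the invariant: for the first-order invariants $c_{ij}^k$ it expands $[Y,[X_i,X_j]]$ in two ways, using the Jacobi identity and $[Y,X_i]=0$ to get $\sum_k Y(c_{ij}^k)X_k=0$, whence $Y(c_{ij}^k)=0$ since the $X_k$ are pointwise linearly independent; then for an invariant $X_if$ of order $n+1$ it writes $Y(X_if)=X_i(Yf)+[Y,X_i]f=0$. You instead integrate $Y$ to its flow $\phi_t$, observe that each $\phi_t$ is a frame automorphism, invoke the naturality of the invariants under frame-preserving diffeomorphisms (the relation $f_\alpha=\tilde f_\alpha\circ\v$ stated just before the lemma, applied with $\widetilde M=M$ and $\TX_i=X_i$) to get $\phi_t^*f=f$, and differentiate at $t=0$. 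Both arguments are sound. You correctly identify the one point that needs care in your version: that ``invariant'' here means precisely the functions $f_\alpha=X_{i_1}\dots X_{i_s}(c_{ij}^k)$, for which naturality under frame-preserving maps does hold (the structure functions transform naturally, and covariant differentiation commutes with pullback along such maps). What your approach buys is conceptual transparency --- invariance under the flow immediately gives annihilation by the generator; what the paper's approach buys is that it stays entirely at the Lie-bracket level, needs no integration of the vector field, and makes the inductive structure on the order of the invariant explicit.
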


\begin{proof}
Suppose
  $$[X_i,X_j]=-\sum_{k=1}^mc_{ij}^kX_k,$$
where $c_{ij}^k$ are the first-order invariants of the frame~$\E$. Then for 
any $i,j=1,\dots,m$ we have
$$[Y,[X_i,X_j]]=[[Y,X_i],X_j]+[X_i,[Y,X_j]]=0.$$
On the other hand,
$$[Y,-\sum_{k=1}^mc_{ij}^kX_k]=-\sum_{k=1}^m(c_{ij}^k[Y,X_k]+Y(c_{ij}^k)X_k)=
  -\sum_{k=1}^m Y(c_{ij}^k)X_k.$$
It follows immediately that $Y(c_{ij}^k)=0$ for all $k=1,\dots,m$. Suppose
  further that $Yf=0$ for any invariant of order $\leq n$. Every invariant of
  order~$n+1$ has the form $X_if$ for some $1\leq i\leq m$ and some
  invariant~$f$ of order~$n$. But
$$Y(X_if)=X_i(Yf)+[Y,X_i]f=0,$$
which completes the proof of the lemma.
\end{proof}

Now let $\E=\{X_1,\dots,X_m\}$ and $\ov\E=\{\ov X_1,\dots,\ov X_m\}$ be two
absolute parallelisms on manifolds $M$ and~$\ov M$, respectively,
and suppose that
\begin{itemize}
\item[a)] the frames $\E$ and $\ov\E$ have the same rank~$r$ and the same
  order~$s$;
\item[b)] if $f_{\alpha_1},\dots,f_{\alpha_r}$ is a collection of 
functionally independent invariants of order $\leq s$ of the frame~$\E$, then
the corresponding invariants $\ov f_{\alpha_1},\dots,\ov f_{\alpha_r}$ of
the frame~$\ov\E$ are also functionally independent;
\item[c)] for any two corresponding invariants $f_\beta$ and $\ov
  f_\beta$ of the frames $\E$ and $\ov\E$, whose order is $\leq s+1$, the
  functions $F_\beta$ and~$\ov F_\beta$ defined by
  \begin{align*}
    f_\beta&=F_\beta(f_{\alpha_1},\dots,f_{\alpha_r}),\\
    \ov f_\beta&=\ov F_\beta(\ov f_{\alpha_1},\dots,\ov f_{\alpha_r}),
  \end{align*}
\end{itemize}
coincide.

As it was stated above, these three conditions are necessary and sufficient in
order that the frames $\E$ and~$\ov\E$ be locally equivalent.

Given two locally equivalent frames $\E$ and~$\ov\E$, let us look at the
problem of finding a diffeomorphism $\varphi\colon M\to\ov M$ establishing the
equivalence. For this purpose, consider the set
$$L=\{\,(p,\ov p)\in M\times\ov M\mid f_{\alpha_i}(p)=\ov f_{\alpha_i}(\ov p),~
i=1,\dots,r\,\}.$$ 
It is easy to show that
\begin{enumerate}
\item $L$ is a submanifold in $M\times\ov M$;
\item the vector fields $X_i-\ov X_i$, $i=1,\dots,m$, are tangent to~$L$, and
  their restrictions to~$L$ generate a completely integrable distribution~$E$
  of dimension~$m$ on~$M$;
\item locally maximal integral manifolds of~$E$ are precisely the graphs of
  local diffeomorphisms~$\varphi\colon M\to\ov M$ establishing the equivalence
  of $\E$ and~$\ov\E$.
\end{enumerate}

Below we use the terminology and results from Appendix~A.

\begin{lem}
  The Lie algebra $\sym(\E)$ is tangent to~$L$, and its restriction to~$L$ is a
  simply transitive symmetry algebra of the distribution~$E$.
\end{lem}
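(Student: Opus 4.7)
The plan is to lift each $Y \in \sym(\E)$ to a vector field $\hat Y$ on $M \times \ov M$ that acts only on the first factor, by $\hat Y_{(p,\ov p)} = (Y_p, 0) \in T_pM \oplus T_{\ov p}\ov M$. Two facts then drive everything: $\hat Y$ automatically commutes with any field living on the $\ov M$-factor, and $Y$ annihilates every invariant of $\E$ by the preceding lemma.

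For tangency, note that $L$ is the common zero set of the functions $g_i(p,\ov p) = f_{\alpha_i}(p) - \ov f_{\alpha_i}(\ov p)$. Applying $\hat Y$ gives $\hat Y(g_i) = Y(f_{\alpha_i})(p) - 0 = 0$, so $\hat Y$ is tangent to $L$. For the symmetry property, since $E$ is generated by the fields $X_i - \ov X_i$, it suffices to compute
\begin{equation*}
[\hat Y, X_i - \ov X_i] = [\hat Y, X_i] - [\hat Y, \ov X_i] = 0 - 0 = 0,
\end{equation*}
the first bracket vanishing because $Y \in \sym(\E)$, the second because the two fields live on different factors. In fact $\hat Y$ commutes with all generators of $E$, which is stronger than preserving~$E$.

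For simple transitivity, which I interpret as the assertion that $\sym(\E)|_L$ is a complement to $E$ inside $TL$ of the correct dimension, I would count dimensions and check directness. At a regular point $\dim L = 2m - r$, $\dim E = m$, and $\dim \sym(\E) = m - r$ from the earlier discussion of automorphism groups of frames of rank~$r$; the dimensions therefore add up. For triviality of intersection, observe that the evaluation of $\sym(\E)$ at $(p,\ov p)$ is contained in $T_pM \oplus \{0\}$, whereas any element $\sum a^i (X_i - \ov X_i)$ of $E$ with vanishing $\ov M$-component forces $\sum a^i (\ov X_i)_{\ov p} = 0$, hence all $a^i = 0$ because the $\ov X_i$ form a frame on~$\ov M$. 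Combined with tangency, this yields $T_{(p,\ov p)} L = E_{(p,\ov p)} \oplus \sym(\E)|_{(p,\ov p)}$, so $\sym(\E)$ is transverse to the foliation integrating $E$ and acts simply transitively on its leaf space.

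The main obstacle I anticipate is conceptual rather than computational: translating the phrase \emph{simply transitive symmetry algebra of $E$} into a precise statement about the tangent bundle of $L$, and invoking cleanly the earlier fact that the automorphism group of a rank-$r$ frame has dimension exactly $m - r$ and acts freely. That freeness is exactly what makes the evaluation $\sym(\E) \to T_p M$ injective, which in turn makes the dimension count tight. Once this bookkeeping is settled, the two bracket identities above do all the work.
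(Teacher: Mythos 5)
Your proposal is correct and follows essentially the same route as the paper: tangency via $\hat Y(f_{\alpha_i}-\ov f_{\alpha_i})=0$ using the preceding lemma, the symmetry property via $[\hat Y, X_i-\ov X_i]=0$, and simple transitivity via the trivial intersection with $E$ (projecting to the $\ov M$-factor), freeness of the symmetry group, and the dimension count $\dim L=2m-r$, $\dim E=m$, $\dim\sym(\E)=m-r$. The only cosmetic difference is that you make the lift $\hat Y=(Y,0)$ explicit where the paper leaves it implicit.
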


\begin{proof}
  Indeed, if $Y\in\sym(\E)$, we have
$$Y(f_{\alpha_i}-\ov f_{\alpha_i})=Y(f_{\alpha_i})-Y(\ov f_{\alpha_i})=0, $$
so that $Y$ is tangent to~$L$.

Now since
$$[Y,X_i-\ov X_i]=[Y,X_i]-[Y,\ov X_i]=0,$$
we see that the restriction of~$Y$ to~$L$ is a symmetry of the
distribution~$E$.

Suppose that at some point $(p,\ov p)\in L$, we have $Y_{(p,\ov p)}\in
E_{(p,\ov p)}$. Then
$$Y_{(p,\ov p)}=\sum_{i=1}^m\alpha_i\bigl((X_i)_p-(\ov X_i)_{\ov p}\bigr)=
  \sum_{i=1}^m\alpha_i(X_i)_p-\sum_{i=1}^m\alpha_i(\ov X_i)_{\ov p},$$
$\alpha_1,\dots,\alpha_m\in\R$ being constants. The projection of the tangent
  vector $Y_{(p,\ov p)}$ to $T_{\ov p}\ov M$ is zero, and hence
  $\alpha_1=\dots=\alpha_m=0$ and $Y_{(p,\ov p)}=0$. Now, since 
  the symmetry group of the absolute parallelism~$\E$ acts on~$M$ without fixed
  points, we have $Y=0$. It follows, in particular, that the intersection of the
  subspaces 
  $$\sym(\E)(p,\ov p)=\{\,Y_{(p,\ov p)}\in T_{(p,\ov p)}L\mid
  Y\in\sym(\E)\,\}$$ and $E_{(p,\ov p)}$ is equal to zero. Finally, $\dim E=m$,
  $\dim L=2m-r$, and $\dim\sym(\E)=m-r$, which implies that the Lie algebra
  $\sym(\E)|_L$ is indeed a simply transitive symmetry algebra of the
  distribution~$E$.
\end{proof}

\subsection{Cartan connections as absolute parallelisms}
\label{car:abs}

The application of the equivalence theory of absolute parallelisms
to the finding of invariants of Cartan connections has certain peculiarities,
which are due mostly to the presence of an additional structure, that of the
action of the Lie group~$G$ on the principal fiber bundle $\pi\colon P\to M$.

Let $\om$ be a Cartan connection on~$P$, $\Omega$ the curvature form of~$\om$,
and $c\colon P\to \Lgg$ the structure function of~$\om$. If
$\{e_1,\dots,e_{n+m}\}$ is a basis of the Lie algebra~$\gb$ such that
$\{e_{m+1},\dots,e_{n+m}\}$ is a basis of the subalgebra~$\g$, then the
structure function~$c$ is uniquely determined by the functions $c_{ij}^k$
$(i,j=1,\dots,m,k=1,\dots,n+m)$:
$$c\colon (e_i+\g)\land(e_j+\g)\mapsto\sum_{k=1}^{n+m}c_{ij}^k e_k.$$
It is not difficult to see that, up to structure constants of~$\gb$, 
the functions $c_{ij}^k$ are precisely the nonzero structure
functions of the absolute parallelism on~$P$ corresponding to the
Cartan connection~$\om$. Hence we may assume without loss of generality that 
$c\colon P\to \Lgg$ is the structure mapping of the first order corresponding
to the absolute parallelism.

Observe that the function~$c$ takes its values in a vector space in which there
is defined a linear action of the group~$G$. Moreover, 
$$c(pq)= g^{-1}.c(p)\quad \text{for all}~ p\in P,g\in G.$$
We shall now show that the structure functions of higher orders, which are
derived from~$c$ with the help of covariant differentiation, can also be
written in this form.

Suppose $\rho\colon G\to GL(V)$ is a representation of the group~$G$ on an
arbitrary vector space~$V$. We say that a function $f\colon P\to V$ is
\emph{$G$-invariant} ({\em of type~$\rho$\/}) if
$$f(pq)= g^{-1}.f(p)\ \forall p\in P,~g\in G.$$

We define the covariant derivative of~$f$ as a function $f^{(1)}$ on~$P$ with
values in the vector space $\mathcal L(\gb,V)=\gb^*\otimes V$ such that
$$f^{(1)}(p)\colon X\mapsto X_p^*f$$
for all $X\in\gb,p\in P$. Hence the functions $f_i\colon P\to V$,
$p\mapsto f^{(1)}(p)(e_i)$, $i=1,\dots,n+m$, are none other than the covariant
derivatives of~$f$ along the fundamental vector fields $e_1^*,\dots,e_{n+m}^*$.

Consider the following action of~$G$ on $\mathcal L(\gb,V)$:
$$(g.\v)(X)=g.\v(\Ad g^{-1}(X))\quad\text{for all } g\in G,\v\in\mathcal 
L(\gb,V), X\in\gb.$$

\begin{prop}\ 

1. The function $f^{(1)}$ is $G$-invariant.

2. For any $X\in \g$,
$$f^{(1)}(P)\colon X\mapsto -X.f,$$ 
where for $v\in V$, $X.v$ denotes the action of~$\g$ on~$V$ corresponding
to the representation~$\rho$.
\end{prop}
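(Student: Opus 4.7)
The plan is to prove the two statements separately, using two facts already established in the paper: the $G$-equivariance of the extended fundamental-field assignment $X\mapsto X^*$, namely $d_pR_g(X^*_p)=(\Ad g^{-1}(X))^*_{pg}$ for every $X\in\gb$ and $g\in G$, and the defining equivariance $f(pg)=g^{-1}.f(p)$ of the function $f$.

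For part~1, I would fix $p\in P$, $g\in G$ and $X\in\gb$, and compute directly. Applying the equivariance of $X\mapsto X^*$ in the form $X^*_{pg}=d_pR_g\bigl((\Ad g(X))^*_p\bigr)$, I get
$$f^{(1)}(pg)(X)=X^*_{pg}f=(\Ad g(X))^*_p(f\circ R_g).$$
But $f\circ R_g=g^{-1}.f$ pointwise, and since the action of $g^{-1}$ on $V$ is linear (hence commutes with directional differentiation in $P$), this equals
$$g^{-1}.\bigl((\Ad g(X))^*_p\,f\bigr)=g^{-1}.\bigl(f^{(1)}(p)(\Ad g(X))\bigr).$$
Comparing with the definition $(g^{-1}.\varphi)(X)=g^{-1}.\varphi(\Ad g(X))$ of the $G$-action on $\mathcal L(\gb,V)$, this is exactly $(g^{-1}.f^{(1)}(p))(X)$, which gives the desired $G$-equivariance of $f^{(1)}$.

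For part~2, I would use that for $X\in\g$ the vector field $X^*$ is the fundamental vector field in the classical sense, so its flow is $p\mapsto p\exp(tX)=R_{\exp(tX)}(p)$. Therefore
$$f^{(1)}(p)(X)=X^*_p f=\frac{d}{dt}\bigg|_{t=0}f(p\exp(tX))=\frac{d}{dt}\bigg|_{t=0}\bigl(\exp(-tX).f(p)\bigr)=-X.f(p),$$
where in the last step I use the $G$-equivariance of $f$ together with the definition of the infinitesimal action $X.v=\frac{d}{dt}|_{t=0}\exp(tX).v$.

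I do not expect a genuine obstacle here: the entire argument is a bookkeeping exercise in the definitions. The only care one needs to take is in handling the appearances of $\Ad g$ versus $\Ad g^{-1}$ and the sign in part~2 (which comes from differentiating $g\mapsto g^{-1}$ at the identity, equivalently from the fact that $f$ is of type $\rho$ with $g^{-1}$ rather than $g$).
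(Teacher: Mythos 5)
Your proof is correct and takes essentially the same route as the paper's: part~1 is the identical chain of equalities based on the equivariance $d_pR_g(X^*_p)=(\Ad g^{-1}(X))^*_{pg}$ and $f\circ R_g=g^{-1}.f$ (your version in fact corrects the paper's typographical slips such as $R_{g^{-1}}$ where $R_g$ is meant), and part~2 merely spells out the one-line flow computation that the paper dismisses as immediate.
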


\begin{proof}\ 

1. Indeed, for any $p\in P, g\in G, X\in\gb$ we have
\begin{multline*}
f^{(1)}(pq)(X)=X_{pq}^*f=(\d_pR_q)(\Ad_g(X)_p^*)f=\\
=\Ad_g(X)_p^*(f\circ R_{g^{-1}})=\Ad_g(X)_p^*(g^{-1}.f)=\\
=g^{-1}.(\Ad_g(X)_p^*f)=(g^{-1}.f^{(1)}(p))(x).
\end{multline*}

2. This follows immediately from the definition of derivative along a vector
field and that of fundamental vector fields.
\end{proof}

Let $s\colon M\to P$ be a local section of the fiber bundle $\pi\colon P\to M$,
$\tilde\om=s^*\om$ a form on~$M$ with values in~$\gb$, and 
$\tilde f=f\circ s$ a function on~$M$ with values in the vector space~$V$.
Since $f$ is $G$-invariant, in view of the equality 
$$f(s(x)g)= g^{-1}.\tilde f(x)\quad\text{for all } x\in M,g\in G,$$
$f$ is determined uniquely by~$\tilde f$.
Show that to find the function $\tilde f^{(1)}=f^{(1)}\circ s$, we need only
know the function~$\tilde f$ and the form $\tilde\om$.

The function~$f^{(1)}$ is uniquely determined by the $V$-valued functions
$f_1,\dots,f_{n+m}$ in the decomposition
$$\d f=f_1\om^1+\dots+f_{n+m}\om^{n+m}.$$
Moreover, each vector~$e_i$ with $i\ge m+1$ lies in the subalgebra~$\g$, and
therefore $f_i=e_if=-e_i.f$. Furthermore, 
$$\d \tilde f=\tilde f_1\om^1+\dots+\tilde f_{n+m}\om^{n+m},$$
where $\tilde f_i=f\circ s$ for all $i=1,\dots,n+m$. It is clear that for $i\ge
m+1$, we have $\tilde f_i=-e_i.\tilde f$, and the functions $\tilde
f_1,\dots,\tilde f_m$ are uniquely determined by the equality
$$\tilde f_1\tilde\om^1+\dots+\tilde f_m\tilde\om^m=
\d\tilde f+(e_{m+1}.\tilde f)\om^{m+1}+\dots+(e_{n+m}.\tilde f)\om^{n+m}.$$

From the definition of Cartan connection it follows that the forms 
$\tom^1,\dots,\tom^m$ form a coframe on~$M$. Let $\TX_1,\dots,\TX_m$ be the
dual frame. Then
$$\tilde f_k=(\d \tilde f+\sum_{i=1}^n(e_{m+i}.\tilde f)\tom^{m+i})(\TX_k)=\\
\TX_i\tilde f+\sum_{i=1}^n\tom^{m+i}(\TX_k)(e_{m+i}.\tilde f).$$

Unlike covariant differentiation along~$X_k$ on~$P$, the right-hand side of
this expression is a nonhomogeneous linear differential operator of the first
order. 

Set $c_1=c$ and $c_{n+1}=c_n^{(1)}$ for $n\ge 1$. Then the structure
mapping~$C^{(s)}$ of the absolute parallelism defined by the
Cartan connection~$\omega$ may be written in the form
$$C^{(s)}=c_1+\dots +c_s.$$
In this notation, it takes values in the vector space
$$V^{(s)}=V\oplus \gb^*\otimes
V\oplus\dots\oplus\underbrace{\gb^*\otimes\dots\otimes \gb^*}_{s-1}\otimes V.$$

We remark that $V^{(s)}$ is endowed with a natural linear action of the Lie
group~$G$, and the mapping $C^{(s)}$ is $G$-invariant.

As an example, we shall prove the following theorem:
\begin{thr}[S.~Lie~\cite{lie2}]\ 

1. The symmetry group of any second-order equation is a Lie group of 
dimension~$\le8$. 

2. A second-order equation is locally equivalent to the equation
$y''=0$ if and only if one of the following conditions is satisfied:
\begin{itemize}
\item[(i)] the symmetry group of the equation in question has dimension~8;
\item[(ii)] the structure function of the corresponding Cartan connection is
  identically equal to zero.
\end{itemize}
\end{thr}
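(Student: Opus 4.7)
The plan is to reduce everything to the absolute parallelism on the principal $G$-bundle $\pi\colon P\to J^1(\R^2)$ canonically associated to the equation by Theorem~\ref{22thr1}, and then to apply the general facts about invariants of absolute parallelisms from Subsection~\ref{car:abs}.

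\textbf{Step 1 (the bound $\leq 8$).} Since $\g$ is the $5$-dimensional algebra of upper-triangular traceless matrices and $\dim J^1(\R^2)=3$, the total space of $P$ has dimension $8$. By Theorem~\ref{22thr1} and the lemma in Section~\ref{sec5}, point symmetries of the equation are in one-to-one correspondence with the infinitesimal automorphisms of the associated Cartan connection $\om$, and the latter coincide, via the absolute parallelism defined by $\om$, with the symmetries of that absolute parallelism. By the general theory, $\dim\sym(\E)=\dim P-r=8-r\leq 8$, where $r$ is the rank of the parallelism. This gives assertion~1.

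\textbf{Step 2 (equivalence of the three conditions).} I will show that the following are equivalent: (a) the equation is locally equivalent to $y''=0$; (b) the structure function $c\colon P\to W$ vanishes identically; (c) $\dim\sym(\E)=8$. Direction (a)$\Rightarrow$(b): the explicit formulas for $\tilde a,\tilde b,\tilde c,\tilde d$ in Section~\ref{sec5} give $\tilde a=\tilde d=0$, hence $\tilde b=\tilde c=0$, when $f\equiv 0$, so $y''=0$ has $c\equiv 0$, and the invariant $c\equiv 0$ is preserved under equivalence. Direction (b)$\Rightarrow$(a): if $c\equiv 0$ then the curvature form $\Om$ vanishes on $P$ (by Lemma~\ref{pr2}, $2^\circ$, $\Om$ is horizontal and hence determined by $c$), so the Cartan connection is flat; by the Example following Lemma~\ref{pr2} on curvature, any two flat Cartan connections with the same model $(\G,M_0)$ are locally isomorphic, so $\om$ is locally isomorphic to the flat connection coming from $y''=0$, and the conclusion follows from the uniqueness part of Theorem~\ref{22thr1}.

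\textbf{Step 3 (the two directions with the symmetry group).} For (b)$\Rightarrow$(c): if $c\equiv 0$, then all covariant derivatives $c_n=c_{n-1}^{(1)}$ vanish as well, so the structure mapping of every order is constant, $r=0$, and $\dim\sym(\E)=8-0=8$. For (c)$\Rightarrow$(b): if $\dim\sym(\E)=8$ then $r=0$, so every first-order invariant of the absolute parallelism is constant on $P$; in particular the components of $c$ are constant. By the $G$-equivariance $c(pg)=g^{-1}.c(p)$, this constant value is fixed under the $G$-action on $W$. Using formula~(\ref{act}) restricted to the diagonal subgroup (with $t=u=v=0$ and $xyz=1$), the coefficients $a,b,c,d$ are multiplied by nontrivial monomials in $x,y,z$; since these monomials cannot all equal $1$ on the two-dimensional torus $\{xyz=1\}$, the only $G$-fixed element of $W$ is $0$. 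Hence $c\equiv 0$.

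\textbf{Main obstacle.} The only place a small computation is really needed is Step~3 (c)$\Rightarrow$(b): verifying that the diagonal subgroup of $G$ acts on each of the four coordinates $a,b,c,d$ of $W$ with characters that are nontrivial on $\{xyz=1\}$, so that the $G$-fixed subspace of $W$ is zero. Everything else follows formally from Theorem~\ref{22thr1}, the flatness/canonical-model comparison for Cartan connections, and the general absolute-parallelism dimension formula $\dim\sym=\dim P-r$.
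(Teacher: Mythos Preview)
Your proof is correct and follows essentially the same approach as the paper: reduce to the absolute parallelism on the $8$-dimensional bundle $P$, use $\dim\sym=8-r$, argue that rank zero forces the structure function to be a $G$-fixed element of $W$ (hence zero by~(\ref{act})), and use flatness to identify with the model $y''=0$. The only cosmetic difference is that for (a)$\Rightarrow$(b) you invoke the explicit formulas for $\tilde a,\tilde d$ at $f=0$, whereas the paper observes directly that the canonical Cartan connection for $y''=0$ is the Maurer--Cartan form on $SL(3,\R)$ and hence has zero curvature.
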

\begin{proof}
Any second-order equation defines a Cartan connection~$\om$ on the principal
fiber bundle $\pi\colon P\to J^1(\R^2)$, and the symmetries of this equation
may be uniquely extended to symmetries of~$\om$. On the other hand, $\om$ can
be regarded as an absolute parallelism on the 8-dimensional
manifold~$P$. The truth of the first assertion of our theorem now follows
immediately.

The symmetry group of the Cartan connection~$\om$ is 8-dimensional if and only
if the corresponding absolute parallelism has rank zero, which is
possible only when the structure function~$c$ of~$\om$ is constant.
But since
$$ c(pg)=g^{-1}.c(p), \quad\text{for all }p\in P,g\in G,$$
it immediately follows that the element $c\equiv c(p)\in W$ must be invariant
under the action of~$G$ on~$W$. The formulas~(\ref{act}) imply however that the
only invariant element of the space~$W$ is the zero vector. This proves the
equivalence of conditions (i) and (ii).

The Cartan connection corresponding to the equation $y''=0$ is precisely the
canonical Maurer--Cartan connection on the Lie group $\G=SL(3,\R)$, and its
curvature form is obviously equal to zero. Thus our original equation is
equivalent to the equation $y''=0$ if and only if the curvature form (and hence
the structure function of~$\om$) is zero.
\end{proof}

\section{Classification of second-order equations}

Consider a second-order equation $y''=f(x,y,y')$ and the
corresponding pair $(V,E)$ of direction fields in $J^1(\R^2)$.

\begin{defi}
A local diffeomorphism $\v\colon \R^2\to \R^2$ is called a \emph{symmetry} of
the equation $y''=f(x,y,y')$ if its first prolongation   
$\v^{(1)}\colon  J^1(\R^2)\to J^1(\R^2)$ preserves the pair $(V,E)$.
\end{defi}

A vector field on the plane is said to be an {\it infinitesimal symmetry\/}
of a second-order equation if the one-parameter group of local diffeomorphisms
generated by this field consists of symmetries of the equation under
consideration.

The set of all symmetries of a given second-order equation forms a Lie algebra.
If two equations are equivalent by means of a certain local diffeomorphism of
the plane, then it is clear that the symmetry algebras of these equations are 
equivalent with respect to the same local diffeomorphism.

The classification of second-order differential equations according to
their symmetry algebras was carried out (in the complex-analytical case) by
Lie~\cite{lie1} and improved by Tresse~\cite{tr2} (see also~\cite{olver2}). 
All the above constructions
are valid in the complex-analytical case, and in the following
we shall consider precisely this case. The real case requires only slight
modifications, which are omitted here. 

\begin{thr}[\cite{tr2}]\label{t32:1}
Let $y''=f(x,y,y')$ be a second-order equation, and $\h$ its symmetry algebra.

1. If $\dim \h=1$, then this equation is equivalent to an equation of the form
$y''=h(x,y')$ with symmetry algebra $\tilde\h=\langle \frac{\p}{\p y}\rangle.$

2a. If $\dim \h=2$ and the Lie algebra $\h$ is commutative, then the given
equation is equivalent to an equation of the form $y''=h(y')$ with symmetry
algebra $\tilde \h=\langle \frac{\p}{\p x}, \frac{\p}{\p y} \rangle.$

2b. If $\dim \h=2$ and $\h$ is not commutative, then the given equation is 
equivalent to an equation of the form $y''=\frac{h(y')}x$ with symmetry algebra
$\tilde \h=\langle \frac{\p}{\p y}, x\frac{\p}{\p x}+
y\frac{\p}{\p y}\rangle.$

3. If $\dim \h=3$, then the given equation is equivalent one of the following
equations: 

\begin{enumerate}[a)]
\item $y''=(y')^\al$, $\tilde \h=\langle \frac{\p}{\p x}, \frac{\p}{\p y},
x\frac{\p}{\p x}+cy\frac{\p}{\p y}\rangle$, $\al=\frac{c-2}{c-1}$\\
($\al\ne 0,1,2,3$; equations corresponding to $\al$ and $3-\al$ are equivalent);
\item $y''=(1+(y')^2)^{3/2}e^{-\al\arctg y'}$,
$\tilde\h = \langle \frac{\p}{\p x}, \frac{\p}{\p y}, (-y+\al x)\frac{\p}{\p x}+(x+\al y)\frac{\p}{\p y}\rangle$\\
($\al\ne0$; equations corresponding to parameters $\al$ and $-\al$
are equivalent);
\item $y''=e^{-y'}$, $\tilde \h=\langle \frac{\p}{\p x}, \frac{\p}{\p y},
x\frac{\p}{\p x}+(x+y)\frac{\p}{\p y}\rangle$;
\item $y''=\frac{\pm(y')^3-y'}{2x}$, $\tilde \h=\langle \frac{\p}{\p y},
x\frac{\p}{\p x}+y\frac{\p}{\p y}, 2xy\frac{\p}{\p x}+
y^2\frac{\p}{\p y}\rangle$;
\item\label{eq:e}
 $y''=\frac{y'(1-(y')^2)+\al|(y')^2-1|^{3/2}}x$, $\tilde
\h=\langle\frac{\p}{\p y}, x\frac{\p}{\p x}+y\frac{\p}{\p y},
2xy\frac{\p}{\p x}+(x^2+y^2)\frac{\p}{\p y}\rangle$\\
($\al\ne0$; equations corresponding to $\al$ and $-\al$ are equivalent);
\item $y''=\frac{y'(1+(y')^2)+\al(1+(y')^2)^{3/2}}x$, $\tilde
\h=\langle\frac{\p}{\p y}, x\frac{\p}{\p x}+y\frac{\p}{\p y},
2xy\frac{\p}{\p x}+(y^2-x^2)\frac{\p}{\p y}\rangle$\\
($\al\ne0$; equations corresponding to $\al$ and $-\al$ are equivalent);
\item $y''=\frac{2(1+(y')^2)(xy'-y)+\al(1+(y')^2)^{3/2}}{1+x^2+y^2}$,\\
$\tilde \h=\langle -y\frac{\p}{\p x}+x\frac{\p}{\p y}, (1+x^2-y^2)\frac{\p}{\p x}+2xy\frac{\p}{\p y},
2xy\frac{\p}{\p x}+(1-x^2+y^2)\frac{\p}{\p y}\rangle$\\
($\al\ne0$; equations corresponding to $\al$ and $-\al$ are equivalent);
\end{enumerate}

4. If $\dim \h>3$, then $\dim\h=8$ and the given equation is equivalent to the
equation $y''=0$.
\end{thr}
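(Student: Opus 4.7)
The overall strategy is to combine the Cartan-connection machinery of Sections~5 and~6 with Lie's classification of finite-dimensional Lie algebras of vector fields on the plane. The extremal dimensions are already settled by the preceding Lie theorem: every second-order equation has $\dim\mathfrak{h}\leq 8$, with equality exactly when the equation is equivalent to $y''=0$. What remains is to rule out the intermediate dimensions $4,5,6,7$ and to bring the equations with $\dim\mathfrak{h}\in\{1,2,3\}$ into the canonical forms listed.

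First I would treat the low-dimensional cases directly in the plane. The symmetry algebra $\mathfrak{h}$ acts by point symmetries, and Lie classified (up to local diffeomorphism) all finite-dimensional Lie subalgebras of vector fields on $\R^2$. For $\dim\mathfrak{h}=1$ the unique local model is $\tilde{\mathfrak{h}}=\langle\p/\p y\rangle$; invariance of $y''=f$ under its flow forces $f$ independent of $y$, giving $y''=h(x,y')$. For $\dim\mathfrak{h}=2$ abelian the model is $\langle\p/\p x,\p/\p y\rangle$, yielding $y''=h(y')$, while the non-abelian case is represented by $\langle\p/\p y,\,x\p/\p x+y\p/\p y\rangle$ and the prolonged invariance equations integrate to $y''=h(y')/x$. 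In each case the prolongation formula from Section~1, combined with the infinitesimal invariance criterion, reduces matters to a simple ODE for $f$.

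For $\dim\mathfrak{h}=3$ I would run through Lie's (equivalently Bianchi's) classification of three-dimensional Lie subalgebras of vector fields on the plane. For each representative algebra $\tilde{\mathfrak{h}}$, the requirement that it be a symmetry algebra of some second-order equation translates, after prolongation to $J^1(\R^2)$, into an overdetermined linear system of PDEs on $f(x,y,y')$. Its general solution is either empty or depends on one continuous or discrete parameter, producing exactly the seven families (a)--(g); the identifications $\alpha\leftrightarrow-\alpha$ and $\alpha\leftrightarrow 3-\alpha$ arise from the finite group of outer automorphisms of $\tilde{\mathfrak{h}}$ realized by diffeomorphisms of the plane preserving the model.

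The remaining step, and the true obstacle, is the gap $3<\dim\mathfrak{h}<8$. For this I would pass to the Cartan connection $\om$ on $P\to J^1(\R^2)$ given by Theorem~\ref{22thr1}: symmetries of the equation lift bijectively to automorphisms of the absolute parallelism defined by $\om$, so $\dim\sym(\om)=8-r$, where $r$ is the rank of the parallelism. It thus suffices to show $r\notin\{1,2,3,4\}$. Using the explicit formulas for the primary invariants $\tilde a$ and $\tilde d$ at the end of Section~\ref{sec5}, together with the Bianchi-type identities $\tilde b=\p\tilde a/\p z$ and $\tilde c=-(d/dx)\tilde d-2f_z\tilde d$, I would differentiate $\tilde a$ and $\tilde d$ along the frame and show that unless both vanish identically, at least five functionally independent invariants appear on $P$, forcing $r\geq 5$. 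Checking this rank estimate against every branch of vanishing of the iterated covariant derivatives is the tedious part of the argument, and is where I expect most of the work to lie.
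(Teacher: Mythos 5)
The paper itself offers no proof of this theorem: it is quoted from Lie and Tresse and justified only by the citation \cite{tr2}, so there is no internal argument to compare yours against. Your outline is the classically correct strategy (essentially Lie's own: normalize the symmetry algebra using the classification of Lie algebras of vector fields in the plane, then solve the prolonged invariance equations for $f$), but as written it has concrete gaps. In the case $\dim\h=2$ you invoke only two of Lie's four canonical forms; a two-dimensional algebra of vector fields can also be of rank one everywhere, with abelian model $\langle \p/\p y,\ x\,\p/\p y\rangle$ and non-abelian model $\langle \p/\p y,\ y\,\p/\p y\rangle$. You must dispose of these before normalizing to $\langle\p/\p x,\p/\p y\rangle$ or $\langle\p/\p y,\ x\p/\p x+y\p/\p y\rangle$: the prolonged invariance conditions for the rank-one models force $f_y=f_{y'}=0$ (hence $y''=h(x)$) or $f=c(x)y'$, both linearizable, so they cannot be the full symmetry algebra when $\dim\h=2$. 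A similar sifting is needed in case 3, where most three-dimensional algebras in Lie's list admit no equation with exactly that symmetry algebra, and the identifications $\al\leftrightarrow 3-\al$ and $\al\leftrightarrow-\al$ require exhibiting the normalizing diffeomorphisms; you acknowledge this work but do not carry it out.

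The more serious gap is the exclusion of dimensions $4$ through $7$, which is the genuine content of the theorem beyond case-by-case bookkeeping, and which your proposal only names. The assertion that whenever $(\tilde a,\tilde d)\not\equiv 0$ the absolute parallelism on the eight-dimensional bundle $P$ has rank at least $5$ is exactly the Tresse gap phenomenon. It does not follow from the explicit formulas for $\tilde a,\tilde d$ and the relations $\tilde b=\p\tilde a/\p z$, $\tilde c=-\frac{d}{dx}\tilde d-2f_z\tilde d$ alone: one must analyse the $G$-orbits of nonzero elements of $W$ under the action (\ref{act}) and the covariant derivatives $c^{(1)}, c^{(2)}$ separately on each vanishing stratum ($\tilde a\ne0=\tilde d$, $\tilde d\ne0=\tilde a$, both nonzero), showing in each case that enough functionally independent invariants survive. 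Until that rank estimate is actually established, the implication ``$\dim\h>3\Rightarrow\dim\h=8$'' --- and with it part 4 of the statement --- remains unproved.
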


\begin{rem}
  In the case 1, he change of variables $z=y'$ reduces the equation
  $y''=h(x,y')$ to the first-order equation $z'=h(z,x)$. The equation~2a has
  the following general solution:
$$y=\int z(x)\d x, \text{ where } \int \frac{\d z}{h(z)}=x.$$
In the case 2b, the general solution has the form
$$y=\int z(x)\d x, \text{ where } \int \frac{\d z}{h(z)}=\ln x.$$

In case~3e all solutions of the given second order ODE satisfy one of the following conditions:
$(y')^2>1$, $(y')^2=1$ or $(y')^2<1$. This means that the equation viewed as a hypersurface 
$\E$ in $J^2(\R^2)$ can be represented as a union of three parts $\E_+$, $\E_0$ è $\E_-$ respectively. 
The sets $\E_+$ and $\E_-$ are open in $\E$ and can be viewed as two separate euqations. As we shall see 
they are not equivalent to each other. In other words, item~3\ref{eq:e} includes two different 
ODEs. We shall refer to them as 3e$_+$ and 3e$_-$ respectively.

Similarly, equations from~3d, corresponding to two different signs are not equivalent to each other and will 
be deonoted by 3d$_+$ and 3d$_-$ respectively.
\end{rem}

In connection with Theorem~\ref{t32:1} the following problems suggest
themselves: 

(A) Given a second-order equation, find the dimension of its symmetry
algebra~$\h$.

(B) If $\h\ne\{0\}$, determine which of the above equations is equivalent to
the given one.

(C) Find a local diffeomorphism establishing the equivalence.

Note that if we are able to solve the problems (A)--(C), then in the case $\dim
\h=1$ this enables us to reduce the given equation to a first-order equation,
while in the case $\dim \h\ge 2$ the general solution may be found explicitly.

We shall consider these problems in terms of the canonical Cartan
connection~$\omega$ corresponding to the second-order equation in question.

\smallskip
{\bf(A)} Symmetries of the pair $(V,E)$ of direction fields are in one-to-one
correspondence with symmetries of the Cartan connection~$\omega$,
regarded as an absolute parallelism. If $r$ is the rank of this
absolute parallelism, then $\dim\h=8-r$.

\smallskip
{\bf(B)} If $\dim\h=1$ or $\dim\h>3$, then our equation is equivalent to a
uniquely determined equation from Theorem~\ref{t32:1}. In the case $\dim\h=2$ we
make use of the fact that the structure of the symmetry algebra of an
absolute parallelism coincides with the structure of the Lie algebra
obtained by restricting the frame to an arbitrary orbit of the symmetry group.
Hence, if this restriction is a commutative Lie algebra, then our initial
equation is equivalent to the equation $y''=h(y')$; otherwise, it may be
transformed into the equation $y''=\frac{h(y')}x$.

These considerations, however, cannot be applied to the case
$\dim\h=3$, since, for example, the symmetry algebras of the equations~3c, 
~3d and 3f in Theorem~\ref{t32:1} are isomorphic to $\mathfrak{sl}(2,\R).$

Consider the function
$$f\colon P\to \R^2, p\mapsto\begin{pmatrix} a(p)\\ d(p)\end{pmatrix},$$
where the functions $a,d$ are uniquely defined by 
$$\Omega_{12}=a\omega_{21}\land\omega_{31},$$
$$\Omega_{23}=d\omega_{31}\land\omega_{32}.$$
It follows from~(\ref{act}) that
$$f(pq)=g^{-1}.f(p),$$
where $g=\begin{pmatrix} x&t&v\\ 0&y&u\\ 0&0&z \end{pmatrix}$
acts on $\R^2$ in the following way:
\begin{equation}\label{e32:1}
  g.\begin{pmatrix} a\\ d\end{pmatrix}=\begin{pmatrix} \frac{x^3}{y^2z}&0\\ 
  0&\frac{xy^2}{z^3}\end{pmatrix} \begin{pmatrix} a\\ d\end{pmatrix}.
\end{equation}
The function~$f$ is therefore $G$-invariant.

We fix the following basis in the Lie algebra $\gb=\mathfrak{sl}(3,\R)$:

$$u_1=\begin{pmatrix} 0&0&0\\ 1&0&0\\ 0&0&0 \end{pmatrix},\
u_2=\begin{pmatrix} 0&0&0\\ 0&0&0\\ 0&1&0 \end{pmatrix},\ 
u_3=\begin{pmatrix} 0&0&0\\ 0&0&0\\ 1&0&0 \end{pmatrix},$$
$$e_1=\begin{pmatrix} -1/3&0&0\\ 0&2/3&0\\ 0&0&-1/3 \end{pmatrix},\ 
e_2=\begin{pmatrix} -1/3&0&0\\ 0&-1/3&0\\ 0&0&2/3 \end{pmatrix},$$
$$e_3=\begin{pmatrix} 0&1&0\\ 0&0&0\\ 0&0&0 \end{pmatrix},\ 
e_4=\begin{pmatrix} 0&0&0\\ 0&0&1\\ 0&0&0 \end{pmatrix},\ 
e_5=\begin{pmatrix} 0&0&1\\ 0&0&0\\ 0&0&0 \end{pmatrix},$$
so that the vectors $e_1,\dots,e_5$ form a basis of the subalgebra~$\g$.
The vector fields $u_i^*$, $i=1,\dots,3,$ $e_j^*$, $j=1,\dots,5$,
on~$P$ define an absolute parallelism, and the dual coframe has
the form:
$$\{\omega_{21},\omega_{32},\omega_{31},\omega_{22}-\omega_{11},
\omega_{33}-\omega_{11},\omega_{12},\omega_{23},\omega_{13}\}.$$

\begin{lem} The functions~$b$ and~$c$ on~$P$ uniquely defined by 
$$\Omega_{13}=b\omega_{21}\land\omega_{31}+c\omega_{31}\land\omega_{32}$$
are the covariant derivatives of the functions $a$ and $(-d)$ along the vector
fields $u_2^*$ and $u_1^*$ respectively:
$$b=u_2^*a, c=-u_1^*d.$$
\end{lem}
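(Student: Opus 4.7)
The plan is to extract the identities $b=u_2^*a$ and $c=-u_1^*d$ from the Bianchi identity applied to the $(1,2)$ and $(2,3)$ matrix entries of $\Omega$, respectively, and then compare coefficients of the top form $\omega_{21}\wedge\omega_{31}\wedge\omega_{32}$ on both sides.

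First I would record the Bianchi identity in the form convenient for matrix-valued forms: since $\Omega=d\omega+\omega\wedge\omega$ (here $\omega\wedge\omega$ denotes matrix multiplication combined with the wedge), one has $d\Omega=\Omega\wedge\omega-\omega\wedge\Omega$. Writing this out for the $(1,2)$ and $(2,3)$ entries, and discarding all terms in which a vanishing $\Omega_{ij}$ appears (recall that by the curvature form stated in Theorem~\ref{22thr1} the only nonzero entries of $\Omega$ are $\Omega_{12},\Omega_{13},\Omega_{23}$), I obtain
\begin{align*}
d\Omega_{12}&=\Omega_{12}\wedge(\omega_{22}-\omega_{11})+\Omega_{13}\wedge\omega_{32},\\
d\Omega_{23}&=\Omega_{23}\wedge(\omega_{33}-\omega_{22})-\omega_{21}\wedge\Omega_{13}.
\end{align*}
Next I would substitute the expressions $\Omega_{12}=a\,\omega_{21}\wedge\omega_{31}$, $\Omega_{23}=d\,\omega_{31}\wedge\omega_{32}$, $\Omega_{13}=b\,\omega_{21}\wedge\omega_{31}+c\,\omega_{31}\wedge\omega_{32}$ into both sides.

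The key step is to expand $da$ and $dd$ in the coframe $\{\omega_{21},\omega_{32},\omega_{31},\omega_{22}-\omega_{11},\omega_{33}-\omega_{11},\omega_{12},\omega_{23},\omega_{13}\}$ dual to $\{u_1^*,u_2^*,u_3^*,e_1^*,\dots,e_5^*\}$, so that e.g.\ the $\omega_{32}$-component of $da$ is exactly $u_2^*a$ and the $\omega_{21}$-component of $dd$ is $u_1^*d$. I then compare the coefficients of the 3-form $\omega_{21}\wedge\omega_{31}\wedge\omega_{32}$ in the identities above. For this I need to know $d\omega_{21}$, $d\omega_{31}$, $d\omega_{32}$, which I compute from $d\omega=\Omega-\omega\wedge\omega$; each of these works out to a sum of wedge products of coframe forms none of which contributes $\omega_{21}\wedge\omega_{31}\wedge\omega_{32}$ after being wedged with the remaining factor. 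Likewise, on the right-hand side of the first equation, $a\,\omega_{21}\wedge\omega_{31}\wedge(\omega_{22}-\omega_{11})$ contributes zero to this coefficient, while $b\,\omega_{21}\wedge\omega_{31}\wedge\omega_{32}$ contributes $b$; an analogous analysis of the second equation produces $-c$.

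Reading off the coefficient of $\omega_{21}\wedge\omega_{31}\wedge\omega_{32}$ therefore yields $u_2^*a=b$ from the $(1,2)$ Bianchi identity and $u_1^*d=-c$ from the $(2,3)$ identity, as required. The main obstacle is bookkeeping: confirming that no hidden $\omega_{32}$-term lurks in $a\,d\omega_{21}\wedge\omega_{31}-a\,\omega_{21}\wedge d\omega_{31}$ (and similarly for $\Omega_{23}$), which requires carefully expanding $d\omega_{ij}=-\sum_k\omega_{ik}\wedge\omega_{kj}$ and using that $\omega_{21}\wedge\omega_{21}=\omega_{31}\wedge\omega_{31}=\omega_{32}\wedge\omega_{32}=0$. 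Once this is verified, the two identities of the lemma drop out immediately.
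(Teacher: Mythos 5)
Your proposal is correct and is exactly the computation the paper has in mind: its one-line proof invokes the structure equation $d\Omega=[\Omega,\omega]$, and you carry out precisely that calculation, reading off the coefficient of $\omega_{21}\wedge\omega_{31}\wedge\omega_{32}$ in the $(1,2)$ and $(2,3)$ entries. The bookkeeping you flag does check out (the diagonal forms $\omega_{11},\omega_{22},\omega_{33}$ expand over the coframe elements $\omega_{22}-\omega_{11},\omega_{33}-\omega_{11}$ and so contribute nothing to that basis 3-form), yielding $b=u_2^*a$ and $c=-u_1^*d$ as claimed.
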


\begin{proof}
This easily follows from the structure equation $\d\Omega=[\Omega,\omega].$
\end{proof}

Let $s\colon \R^3\to P$ be the same equation as the one used in the
construction of the Cartan connection~$\omega$.

Then the forms $\tom_{21}=\d x$, $\tom_{32}=\d z-f\d x$, and
$\tom_{31}=\d y-z\d x$, form the coframe on~$\C^3$ which is the dual of the
frame 
$$\TX_1=\frac{\d}{\d x},~\TX_2=\frac{\p}{\p z},~\TX_3=\frac{\p}{\p y}.$$
We let $\tilde f=s^*f=\begin{pmatrix} \tilde a\\ \tilde d\end{pmatrix}$ and find the
function 
$$\tilde f^{(1)}\colon \R^3\to\mathcal L(\gb,\R^2).$$
The action of the Lie algebra~$\g$ on~$\R^2$ corresponding to the
action~(\ref{e32:1}) of the Lie group~$G$ has the form
$$e_1.\begin{pmatrix} \tilde a\\ \tilde d\end{pmatrix}=\begin{pmatrix} -2\tilde a\\ 2\tilde
d\end{pmatrix}, 
\ e_2.\begin{pmatrix} \tilde a\\ \tilde d\end{pmatrix}=\begin{pmatrix} -\tilde a\\ -3\tilde
d\end{pmatrix},$$ 
$$e_i.\begin{pmatrix} \tilde a\\ \tilde d\end{pmatrix}=0,\ i=3,4,5.$$

If now $\tilde f_i=(u_i^*f)\circ s,$ $i=1,2,3$, then we have
$$\tilde f_1\tom_{21}+\tilde f_2\tom_{32}+\tilde f_3\tom_{31}=\\
\d \tilde f+\begin{pmatrix} -2\tilde a\\ 2\tilde d\end{pmatrix}(\tom_{22}-\tom_{11})+
\begin{pmatrix} -\tilde a\\ -3\tilde d\end{pmatrix}(\tom_{33}-\tom_{11}),$$
and it follows immediately that
\begin{multline*}
\tilde f_1=\TX_1\begin{pmatrix} \tilde a\\ \tilde d\end{pmatrix}+(\tom_{22}-\tom_{11})
(\TX_1)\begin{pmatrix} -2\tilde a\\ 2\tilde d\end{pmatrix}+\\
+(\tom_{33}-\tom_{11})(\TX_1)\begin{pmatrix} -\tilde a\\ -3\tilde d\end{pmatrix}=
\begin{pmatrix} \frac{\d\tilde a}{\d x}-2f_z\tilde a\\[1mm] \frac{\d\tilde d}{\d x}+
2f_z\tilde d\end{pmatrix}.
\end{multline*}
Similarly,
$$\tilde f_2=\begin{pmatrix} 
\frac{\p\tilde a}{\p z}\\[1mm] \frac{\p\tilde d}{\p z}\end{pmatrix},\ 
\tilde f_3=\begin{pmatrix} \frac{\p\tilde a}{\p y}+1/2f_{zz}\tilde a\\[1mm]
\frac{\p\tilde d}{\p y}+3/2f_{zz}\tilde d\end{pmatrix},$$
and, in particular,
$$\tilde b=\frac{\p\tilde a}{\p z}, \tilde c=-\frac{\d 
\tilde d}{\d x}-2f_z\tilde d.$$

Thus, identifying the elements of the space ${\mathcal L}(\gb,\R^2)$ with 
$2\times 8$-matrices, we obtain
$$\tilde f^{(1)}=\begin{pmatrix} c_{11}&c_{12}&c_{13}&2\tilde a&\tilde a&0&0&0\\
 c_{21}&c_{22}&c_{23}&-2\tilde d&3\tilde d&0&0&0\end{pmatrix},$$
where $\begin{pmatrix} c_{1i}\\ c_{2i}\end{pmatrix}=\tilde f_i$ for $i=1,2,3.$

Since the elements of the image of~$\tilde f^{(1)}$ are zero on the subalgebra
$\g_1=\langle e_3,e_4,e_5\rangle$, we can assume that $\tilde f^{(1)}$
takes values in the space $\mathcal L(\gb/ {\g_1},\R^2)$.
Then the elements $u_i+\g_1$, $i=1,2,3$, $e_j+\g_1$, $j=1,2$, form a basis of
the space $\gb/ {\g_1}$, and in this basis,
\begin{equation}\label{e32:2}
\tilde f^{(1)}=\begin{pmatrix} c_{11}&c_{12}&c_{13}&2\tilde a&\tilde a\\
 c_{21}&c_{22}&c_{23}&-2\tilde d&3\tilde d\end{pmatrix}.
\end{equation}

An element $g=\begin{pmatrix} x&t&v\\ 0&y&u\\ 0&0&z \end{pmatrix}$
acts on the space $\mathcal L(\gb/ \g_1,\C^2)$ according to the formula
$g.A=X_1 A X_2^{-1}$, where
$$ X_1= \begin{pmatrix} \frac{x^3}{y^2z}&0\\ 0&\frac{xy^2}{z^3}\end{pmatrix},
X_2=\begin{pmatrix} \frac yx&0&\frac ux&0&0\\
0&\frac zy&-\frac{zt}{xy}&0&0\\ 0&0&\frac zx&0&0\\
-\frac{2t}x&\frac uy &-\frac{yv+ut}{xy}&1&0\\-\frac tx&-\frac uy&\frac{-2yv+ut}{xy}&0&1
\end{pmatrix}.$$

Let us divite euqations with 3-dimensional symmetry alegrba from Theorem~\ref{e32:1} 
into three families depending on whether the values of invariants $\tilde a$ and $\tilde d$
vanish or not. Direct calculation shows that:
\begin{itemize}
\item[] $\tilde a=0,\,\tilde d\ne0$: 3e$_+$\,($\alpha=\pm1$),
3f\,($\alpha=\pm1$);
\item[] $\tilde a\ne0,\,\tilde d=0$: 3d$_{\pm}$;
\item[] $\tilde a\ne0,\,\tilde d\ne0$: all other equations.
\end{itemize}

Consider first the case $\tilde a\ne 0, \tilde d\ne 0$. Then the matrix~(\ref{e32:2})
can always be brought to the form
\begin{equation}\label{e32:3}
\begin{pmatrix} s_{1}&s_{2}&s_{3}&2\tilde a&\tilde a\\
0&0&0&-2\tilde d&3\tilde d\end{pmatrix}.\end{equation}
This may be done by choosing the element $g\in G$ with
$$x=y=z=1,\ t=\frac{c_{21}}{\tilde d},\ u=-\frac{c_{22}}{5\tilde d},
\ v=\frac{-ut\tilde d+uc_{21}-tc_{22}-c_{23}}{4\tilde d}.$$

Then the functions $s_i$, $i=1,2,3$, $\tilde a$, $\tilde d$ are determined
uniquely up to transformations of the form
\begin{equation}\label{eq:si1}
\begin{pmatrix}
\tilde a\\ \tilde d
\end{pmatrix}\mapsto
\begin{pmatrix} \frac{x^3}{y^2z}\tilde a
\\[1mm] \frac{xy^2}{z^3}\tilde d
\end{pmatrix},
\end{equation}

\begin{equation}\label{eq:si2}
\begin{pmatrix} s_1\\s_2\\s_3
\end{pmatrix}\mapsto
\begin{pmatrix} \frac{x^4}{y^3z}s_1
\\[1mm] \frac{x^3}{yz^2}s_2\\[1mm] \frac{x^4}{y^2z^2}s_3
\end{pmatrix}.
\end{equation}

Thus the functions $\tilde a,\tilde d$, $s_1,s_2,s_3$ are
\emph{semi-invariants} of the original equation $y''=f(x,y,y')$. A
straightforward computation of these semi-invariants for the equation 3a--3c, 3e--3g
from Theorem~\ref{t32:1} shows that this set of semi-invariants is enough to
identify the equation equivalent to a given equation with three-dimensional
symmetry algebra. The invariant conditions below give various classes of
equations equivalent to the equations 3a--3c, 3e--3g from Theorem~\ref{t32:1}.

1. $s_1,s_2\ne0$. This case includes equations 3a ($\alpha\ne3/2$),
3b ($\alpha\ne0$) and 3c. Then we can define the invarant 
$I_1=\frac{\tilde as_3}{s_1s_2}$,  which takes the following values on equations 3a--3c:

\begin{center}
\begin{tabular}{|c|c|c|}\hline
Equations & Parameter & Invariant $I_1$ \\ \hline
3a & $\alpha\ne3/2$ &
$\frac{41\alpha(\alpha-3)+96}{256\alpha(\alpha-3)+576}>\frac{41}{256}$ \\ \hline
3b & $\alpha\ne0$ &
$\frac{41\alpha^2-15}{256\alpha^2}<\frac{41}{256}$ \\ \hline
3c & & $\frac{41}{256}$ \\ \hline
\end{tabular}
\end{center}

Thus, the value of $I_1$ is different for all non-equivalent equations from items 3a--3c 
and allows to identify uniquely the corresponding equation.

2. $s_1=s_2=0$. This case includes 3a ($\alpha=3/2$),
3b ($\alpha=0$), 3e (except 3e$_+$ for $\alpha=\pm1$), 3f ($\alpha\ne\pm1$),
3g. It turns out that in all these cases the semi-invariant $s_3$ does not vanish. So, we can
define the invariant $I_2=\frac{\tilde a^5\tilde d}{s_3^4}$. Moreover, 
equations~(\ref{eq:si1}) and~(\ref{eq:si2}) imply that the signs of semi-invariants 
$\tilde a\tilde d$ and $s_3$ have invariant meaning. The table below lists the values of these invariants 
for the euqations from this case.

\begin{center}
\begin{tabular}{|c|c|>{\hskip10mm}m{27mm}|>{\hskip10mm}m{27mm}|c|}\hline
\raisebox{-1.5ex}[0pt][0pt]{Equation} & Condtions on &
\multicolumn{2}{|c|}{Signs of semi-invariants} & Value of the \\ \cline{3-4}
& parameters & $\tilde a\tilde d$ & $s_3$ & invariant $I_2$ \\ \hline
3a     & $\alpha=3/2$ & $+$ & $-$ & $\frac1{36}$ \\ \hline
3b     & $\alpha=0$ & $+$ & $+$ & $\frac1{36}$ \\ \hline
3e$_+$ & $0<|\alpha|<1$ & $-$ & $+$ &
$\frac1{36}\frac{\alpha^2-1}{\alpha^2}<\frac1{36}$ \\
       & $|\alpha|>1$ & $+$ & $-$ &
$\frac1{36}\frac{\alpha^2-1}{\alpha^2}<\frac1{36}$ \\ \hline
3e$_-$ & $\alpha\ne0$  & $+$ & $-$ &
$\frac1{36}\frac{\alpha^2+1}{\alpha^2}>\frac1{36}$ \\ \hline
3f & $0<|\alpha|<1$ & $-$ & $-$ &
$\frac1{36}\frac{\alpha^2-1}{\alpha^2}<\frac1{36}$ \\
   & $|\alpha|>1$ & $+$ & $+$ &
$\frac1{36}\frac{\alpha^2-1}{\alpha^2}<\frac1{36}$ \\ \hline
3g & $\alpha\ne0$  & $+$ & $+$ &
$\frac1{36}\frac{\alpha^2+1}{\alpha^2}>\frac1{36}$ \\ \hline
\end{tabular}
\end{center}
Again, this table implies that the given invariants allow to recognize the corresponding 
equation in a unique way.

Consider now the case $\tilde d=0,\,\tilde a\ne0$. Then the second row
of the matrix~(\ref{e32:2}) vanishes identically, and function
$\tilde f^{(1)}$ is not sufficient to distinguish between equations 3d$_+$ and
3d$_-$. Similar to the fuction $f$, we can consider the fuction
$$h\colon P\to \R^4,\quad p\mapsto
\begin{pmatrix} a(p) \\ a_1(p) \\ a_2(p) \\ a_3(p)
\end{pmatrix},$$
where $a_i=u_i^*a$ for $i=1,\dots,3$. Then this function is equivariant: $h(pg)=g^{-1}.h(p)$, 
where the action of $g\in G$ on $\R^4$ is given by the following matrix:
$$
\rho(g)=\begin{pmatrix} \frac{x^3}{y^2z} & 0 & 0 & 0 \\
\frac{5x^3t}{y^3z} & \frac{x^4}{y^3z} & 0 & 0 \\
-\frac{x^3u}{y^2z^2} & 0 & \frac{x^3}{yz^2} & 0 \\
\frac{x^3(4yv-5ut)}{y^3z^2} & -\frac{x^4u}{y^3z^2} &
\frac{x^3t}{y^2z^2} & \frac{x^4}{y^2z^2}\end{pmatrix}.
$$
Fix a section $s\colon\R^3\to P$ and consider $\tilde h=s^*h$.
We get
$$\tilde h=\begin{pmatrix} \tilde a \\ \tilde a_1 \\ \tilde a_2 \\ \tilde a_3
\end{pmatrix},\quad \text{where } \quad
\begin{aligned}
\tilde a_1&=\frac{d\tilde a}{dx}-2f_z\tilde a,\\
\tilde a_2&=\frac{\p\tilde a}{\p z},\\
\tilde a_3&=\frac{\p\tilde a}{\p y}+\frac12f_{zz}\tilde a.
\end{aligned}
$$

Consdier the function $h^{(1)}\colon P\to \L(\gb,\R^4)$.
Then, using the results of subsection~\ref{car:abs}, we get:
$$
h^{(1)}=
\begin{pmatrix}
a_1 & a_2 & a_3 & 2a & a & 0 & 0 & 0 \\
a_{11} & a_{12} & a_{13} & 3a_1 & a_1 & -5a & 0 & 0 \\
a_{21} & a_{22} & a_{23} & a_2 & 2a_2 & 0 & a & 0 \\
a_{31} & a_{32} & a_{33} & 2a_3 & 2a_3 & -a_2 & a_1 & -4a
\end{pmatrix},
$$
where $a_{ij}=u_i^*u_j^*a$ for all $i,j=1,\dots,3$. Then the structure
equation $d\Om=[\Om,\om]$ implies that $a_{22}=-(u_1^*)^2\,d=0$. In addtion, using
the Lie bracket relations among vector fields $u_1^*$, $u_2^*$, $u_3^*$ we get:
\begin{equation*}
\begin{aligned}
a_{21}&=a_{12}-a_3,\\
a_{31}&=a_{13},\\
a_{32}&=a_{23}.
\end{aligned}
\end{equation*}

Elements $g\in G$ act on $\L(\gb,\R^4)$ as follows:
$g.\phi=\rho(g)\phi\Ad g^{-1}$. Using this formula, it is easy to show that there is section
$s\colon \R^3\to P$, such that $\tilde h^{(1)}=s^*h^{(1)}$ has the form:
$$
\tilde h^{(1)}=
\begin{pmatrix}
0 & 0 & 0 & 2\tilde a & \tilde a & 0 & 0 & 0 \\
s_{11} & s_{12} & s_{13} & 0 & 0 & -5\tilde a & 0 & 0 \\
s_{12} & 0 & s_{23} & 0 & 0 & 0 & \tilde a & 0 \\
s_{13} & s_{23} & s_{33} & 0 & 0 & 0 & 0 & -4\tilde a
\end{pmatrix}.
$$
Here functions $s_{11}$, $s_{12}$, $s_{13}$, $s_{23}$ and $s_{33}$ are semi-invariants
defined up to the following scaling:
$$(s_{11},s_{12},s_{13},s_{23},s_{33})\mapsto
\left(\frac{x^5}{y^4z}s_{11}, \frac{x^4}{y^2z^2}s_{12}, \frac{x^5}{y^3z^2}s_{13},
\frac{x^4}{yz^3}s_{23}, \frac{x^5}{y^2z^3}s_{33}\right).$$

\enlargethispage*{1cm}
Explicit computation of these semi-invariants for equation~3d shoaw that
$s_{13}=s_{23}=0$ in this case, and the three remianing invariants do not vanish.
These three semi-invariants allow us to form two invariants:
$I_1=\frac{s_{11}s_{12}}{\tilde a^3}$ and
$I_2=\frac{s_{12}^2}{\tilde a s_{33}}$. They take values $25/12$ and $-5/4$ respectively foê~3d. 
Finally, the sign of the semi-invariant $s_{12}$ is also preserved. It is positive for 3d$_+$ and negative 
in case of 3d$_-$.

The case $\tilde a=0,\,\tilde d\ne0$ can be considered in a similar way. We just note that euqations
3e$_+$ ($\alpha=\pm1$) and 3f ($\alpha=\pm1$) are dual to 3d$_+$ and 3d$_-$ respectively.

\smallskip
{\bf (C)} With the help of the concept of Cartan connection, the problem of
finding a diffeomorphism transforming one second-order equation into another
may be reduced to a similar problem for absolute parallelisms.

Note that we know the symmetry algebras for the equations listed in Theorem~7,
so that we can give the explicit form of the symmetry algebras of the
corresponding absolute parallelisms. Then, as follows from Lemma~5.2, the
problem of bringing a given equation to the canonical form reduces to the
integration of some completely integrable distribution with simply
transitive symmetry algebra isomorphic to the symmetry algebra of the original
equation.

The last problem is studied in Appendix~A. In particular, Theorem~12 shows that
the explicit form of the desired transformation may be found in quadratures if
the symmetry algebra is solvable. In Theorem~7, the only equations that have
solvable symmetry algebras are equations 1, 2a, 2b, 3a, 3b and~3ñ. In particular,
this allows to construct the general solutions of all equations equivalent to
~2a, 2b, 3a, 3b, 3c in quadratures.

\section*{Appendix A. Symmetries of completely integrable distributions}
\renewcommand{\thesection}{A}

\subsection{Basic definitions}

Let $M$ be a smooth manifold of dimension $n+m$. To each point $p\in M$ we
assign a subspace~$E_p$ of dimension~$m$ in the tangent space~$T_pM$.
Assume that $E_p$ depends smoothly on~$p$. Then the family $\{E_p\}$ is
called a {\em distribution\/} on~$M$.

Consider the sets
\begin{align*}
\D(E) &=\bigl\{\,X\in\Vect M\bigm|X_p\in E_p,~\forall p\in M\,\bigr\},\\
\L(E) &=\bigl\{\,\omega\in\L^1M\bigm|\omega(X)=0,~\forall\,X\in\D(E)\,\bigr\}.
\end{align*}
Both of the sets are modules over the ring $\C$ of smooth functions on~$M$.
The distribution~$E$ is uniquely determined by the module~$\D(E)$, or by the
module~$\L(E)$.

\begin{exmps}\ 
 
\textbf{1.}
Consider the differential equation
$$y^{(n)}=f\bigl(x,y,y',\dots,y^{(n-1)}\bigr).$$
Let
$$y=y(x),~y_1=y'(x),~\dots,~y_{n-1}=y^{(n-1)}(x),$$
$$M=\R^{n+1}.$$
The forms 
\begin{align*}
\omega_0&=dy-y_1\,dx,\\
  &\vdots\\
\omega_{n-2}&=dy_{n-2}-y_{n-1}\,dx,\\
\omega_{n-1}&=dy_{n-1}-f(x,p_0,\dots,p_{n-1})\,dx.
\end{align*}
define a distribution of dimension~1 on~$M$. 
The module~$\D(E)$ is generated by the field
$$\po x+y_1\po{y}+\dots+y_{n-1}\po{y_{n-2}}+f\po{y_{n-1}}.$$

\smallskip
\textbf{2.} {\bf (Contact distribution)} Let
$$M=\R^3,\quad\omega=dy-z\,dx.$$
The form~$\omega$ determines a distribution of dimension~2 on~$\R^3$. The
module $\D(E)$ is generated by the fields
$$X_1=\po x+z\,\po y,\quad X_2=\po z.$$
\end{exmps}

\begin{defi} Let $E$ be a distribution on a manifold~$M$.
A submanifold $L\subset M$ is called an {\em integral
manifold\/} of~$E$ if
$$T_pL\subset E_p$$
for all $p\in L$.
\end{defi}

The central object of the theory of distributions is finding integral
manifolds.

\begin{defi} We shall say that a distribution~$E$ on a manifold~$M$
is {\em completely integrable\/} if for any point $p\in M$, there exists an
integral manifold~$L$ of~$E$ such that $p\in L$ and such that
$$\dim L=\dim E,$$
that is $L$ has the greatest possible dimension.
\end{defi}

The distribution of the first example is completely integrable. This follows
from the fact that direction fields are locally rectifiable (the unique
existence theorem for ordinary differential equations).  We shall show later
that the Cartan distribution in the second example is not completely
integrable.

The following theorem gives a criterion to determine whether a distribution is
completely integrable.

\begin{thr}[Frobenius] 
The following conditions are equivalent:
\begin{enumerate}
\item $E$ is completely integrable.
\item The module $\D(E)$ is closed under the operation of commutation of
vector fields, that is $\D(E)$ is a Lie algebra.
\item The differential $d\omega$ of an arbitrary form $\omega\in\L(E)$ lies
in the ideal of~$\L^*M$ generated by the set~$\L(E)$, or in other words,
$$d\omega=\sum\gamma_j\wedge\omega_j$$
with $\omega_j\in\L(E)$, $\gamma_j\in\L^1(M)$.
\item In a certain neighborhood $U$ in~$M$, there exists a local
coordinate system
$(x_1,\dots,x_m,x_{m+1},\dots,x_{m+n})$ such that the intersection of~$U$
and an arbitrary maximal integral manifold of~$E$ has the form
$$x_{m+1}=\const,~\dots,~x_{m+n}=\const.$$
\end{enumerate}
\end{thr}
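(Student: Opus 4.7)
The plan is to prove the cycle $(4)\Rightarrow(1)\Rightarrow(2)\Leftrightarrow(3)\Rightarrow(4)$, so that all four conditions become equivalent.

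The easy steps go first. For $(4)\Rightarrow(1)$ the slices $\{x_{m+1}=c_{m+1},\dots,x_{m+n}=c_{m+n}\}$ are integral manifolds of maximal dimension through every point. For $(1)\Rightarrow(2)$, pick $X,Y\in\D(E)$ and $p\in M$; by~(1) choose an integral manifold $L$ through $p$ of dimension $\dim E$. Since $X_q,Y_q\in E_q=T_qL$ for every $q\in L$, the fields $X|_L,Y|_L$ are genuine vector fields on $L$, and hence $[X,Y]|_L$ is also tangent to $L$. Thus $[X,Y]_p\in T_pL=E_p$, and since $p$ was arbitrary, $[X,Y]\in\D(E)$. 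The equivalence $(2)\Leftrightarrow(3)$ follows from the identity
\begin{equation*}
d\omega(X,Y)=X\omega(Y)-Y\omega(X)-\omega([X,Y]).
\end{equation*}
For $\omega\in\L(E)$ and $X,Y\in\D(E)$, the first two terms vanish, so $d\omega(X,Y)=-\omega([X,Y])$. Condition~(2) says the right-hand side is always zero, whereas condition~(3) (unpacked via duality) says $d\omega$ vanishes on pairs from $\D(E)$; these are the same requirement.

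The main step is $(2)\Rightarrow(4)$, which I would prove via simultaneous rectification of commuting vector fields. Fix $p\in M$ and choose local coordinates $(x_1,\dots,x_{m+n})$ on a neighborhood of $p$ with $E_p=\langle\partial/\partial x_1,\dots,\partial/\partial x_m\rangle$. Shrinking the neighborhood, $E$ remains transverse to $\langle\partial/\partial x_{m+1},\dots,\partial/\partial x_{m+n}\rangle$, and there is a unique local frame of $\D(E)$ of the form
\begin{equation*}
X_i=\frac{\partial}{\partial x_i}+\sum_{k=m+1}^{m+n} a_{ik}\frac{\partial}{\partial x_k},\qquad i=1,\dots,m.
\end{equation*}
A direct computation shows that $[X_i,X_j]$ has no $\partial/\partial x_l$ component for $l\le m$. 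By assumption~(2) the bracket $[X_i,X_j]$ lies in $\D(E)$, but every nonzero element of $\D(E)$ does have a nontrivial component along some $\partial/\partial x_l$ with $l\le m$ (read off from its expansion in the frame $X_1,\dots,X_m$). Hence $[X_i,X_j]=0$ for all $i,j$.

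The final step is to rectify the commuting frame $X_1,\dots,X_m$ simultaneously. Their flows $\phi^i_t$ pairwise commute, so the map
\begin{equation*}
(t_1,\dots,t_m,x_{m+1},\dots,x_{m+n})\longmapsto\phi^1_{t_1}\circ\cdots\circ\phi^m_{t_m}(0,\dots,0,x_{m+1},\dots,x_{m+n})
\end{equation*}
is a local diffeomorphism near the origin whose pullback sends $X_i$ to $\partial/\partial t_i$. Renaming coordinates, we obtain a chart in which the integral manifolds of $E$ are precisely the slices $x_{m+1}=\mathrm{const},\dots,x_{m+n}=\mathrm{const}$, establishing~(4). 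The hard part is really the passage from commuting fields to simultaneous coordinates, which rests on the standard fact that commuting complete (locally) flows give a well-defined local $\R^m$-action; once this is granted, everything else is bookkeeping.
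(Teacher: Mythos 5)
The paper does not prove this theorem at all: it is stated in Appendix~A as the classical Frobenius theorem and used as a black box, so there is no in-paper argument to compare yours against. Your blind proof is the standard one and is correct: the cycle $(4)\Rightarrow(1)\Rightarrow(2)\Leftrightarrow(3)$ together with $(2)\Rightarrow(4)$ does establish all equivalences, the reduction of $(2)\Rightarrow(4)$ to a commuting adapted frame $X_i=\partial/\partial x_i+\sum_k a_{ik}\,\partial/\partial x_k$ is the key idea, and the simultaneous straightening via commuting flows is carried out correctly. Two places lean on standard facts that you state rather than prove, and it is worth being aware of them: in $(1)\Rightarrow(2)$ you use that the bracket of two vector fields tangent to a submanifold is again tangent to it, and in $(3)\Rightarrow(2)$ the converse direction (that a $2$-form vanishing on $\D(E)\wedge\D(E)$ lies in the ideal generated by $\L(E)$) requires the pointwise linear-algebra step of expanding $d\omega$ in a local coframe adapted to $E$ and observing that the purely ``horizontal'' coefficients vanish; both are routine but are the only real content of those implications. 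One last pedantic remark: in the flat chart a \emph{maximal} integral manifold may meet $U$ in several slices $x_{m+1}=\const,\dots,x_{m+n}=\const$ rather than exactly one, but this imprecision is already present in the statement as given in the paper and is not a defect of your argument.
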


\begin{rem} If $E$ is given by $n$ differential 1-forms
$\omega_1,\dots,\omega_n$, then these forms generate the ideal
mentioned in the third condition
of the Theorem. Then the relations $d\omega_i=\sum\gamma_{ij}
\wedge\omega_j$ are equivalent to the relations
$$d\omega_i\wedge\omega_1\wedge\dots\wedge\omega_n=0,\quad i=1,\dots,n.$$
\end{rem}

\begin{ex}
Let us show that the Cartan distribution is not completely integrable. Indeed,
it is determined by the 1-form $\omega=dy-z\,dx$, and
$$d\omega\wedge\omega=dx\wedge dz\wedge\omega=-dx\wedge dy\wedge dz\ne0.$$
\end{ex}

A diffeomorphism $\phi\colon M\to M$ is called a {\em symmetry of a
distribution\/}~$E$ if
$$d\phi_p(E_p)=E_{\phi(p)}$$
for every point $p\in M$.

Thus symmetries preserve the distribution by mapping its subspaces into
each other. If $L$~is an integral manifold of the distribution~$E$, then
its image $\phi(L)$ by a symmetry~$\phi$ is also an integral manifold.

\begin{defi} 
  A vector field $X\in\Vect M$ is called an {\em infinitesimal symmetry\/} (or
  simply a {\em symmetry\/}) of a distribution~$E$ if the corresponding
  one-parameter local transformation group $\phi_t$ consists of symmetries.
\end{defi}

Denote by $\sym(E)$ the set of all infinitesimal symmetries of the
distribution~$E$. The next theorem allows to determine whether a vector
field $X$ is a symmetry of $E$.

\begin{thr} The following conditions are equivalent:
\begin{enumerate}
\item $X\in\sym(E)$;
\item $[X,~\D(E)]\subset\D(E)$.
\end{enumerate}
\end{thr}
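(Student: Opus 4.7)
The implication $(1)\Rightarrow(2)$ is essentially the definition of the Lie bracket as a Lie derivative. If $\phi_t$ denotes the local flow of $X$ and $\phi_t$ is a symmetry of $E$, then for any $Y\in\D(E)$ the pulled-back field $\phi_t^*Y$ takes values in $E$ at every point (since $d\phi_{-t}$ carries $E_{\phi_t(p)}$ into $E_p$). Therefore
\[
[X,Y]_p = \mathcal L_XY\big|_p = \frac{d}{dt}\bigg|_{t=0}\bigl(d\phi_{-t}(Y_{\phi_t(p)})\bigr) \in E_p,
\]
because $E_p$ is a linear subspace of $T_pM$ and hence closed under differentiation of curves lying in it. This gives $[X,\D(E)]\subset\D(E)$.

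For $(2)\Rightarrow(1)$, I would argue pointwise via a linear ODE. Fix $p_0\in M$, choose a local frame $Y_1,\dots,Y_m$ generating $\D(E)$ in a neighborhood of $p_0$, and by (2) write $[X,Y_i]=\sum_j a_{ij}Y_j$ for smooth $a_{ij}$. For $t$ in the interval where $\phi_t(p_0)$ is defined, set
\[
V_i(t) = d\phi_{-t}\bigl(Y_i(\phi_t(p_0))\bigr) \in T_{p_0}M.
\]
Then $V_i(0)=Y_i(p_0)\in E_{p_0}$. Differentiating, and using the standard identity $\frac{d}{dt}\phi_t^*Y = \phi_t^*[X,Y]$ (which follows from $\phi_t^*X=X$ and naturality of the bracket under pullback), one finds
\[
\dot V_i(t) = \sum_{j} a_{ij}(\phi_t(p_0))\,V_j(t).
\]
This is a linear system of ODEs in the finite-dimensional vector space $T_{p_0}M$ for the tuple $(V_1,\dots,V_m)$, with coefficient matrix depending only on $t$.

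By uniqueness of solutions of linear ODEs, the fundamental matrix is invertible, so each $V_i(t)$ is a linear combination of the initial vectors $V_1(0),\dots,V_m(0)$; in particular
\[
V_i(t)\in\mathrm{span}\{Y_1(p_0),\dots,Y_m(p_0)\} = E_{p_0}.
\]
Unwinding the definition of $V_i(t)$ yields $Y_i(\phi_t(p_0))\in d\phi_t(E_{p_0})$ for every $i$; since the $Y_i(\phi_t(p_0))$ span $E_{\phi_t(p_0)}$, we get $E_{\phi_t(p_0)}\subset d\phi_t(E_{p_0})$, and equality follows from $\dim E_{p_0}=\dim E_{\phi_t(p_0)}=m$. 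Hence each $\phi_t$ is a symmetry, i.e.\ $X\in\sym(E)$.

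The only step requiring any care is setting up the ODE for $V_i(t)$ correctly; the rest is formal. I do not expect a genuine obstacle, since the result is a standard piece of distribution theory and the argument works for an arbitrary (not necessarily integrable) distribution of constant rank.
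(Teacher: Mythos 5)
The paper states this theorem in Appendix~A without proof (it is part of the standard background on distributions, deferred to the literature), so there is no argument of the authors' to compare against. Your proof is correct and is the standard one: the direction $(1)\Rightarrow(2)$ via $[X,Y]=\frac{d}{dt}\big|_{t=0}\phi_t^*Y$ with the curve $t\mapsto(\phi_t^*Y)_p$ confined to the linear subspace $E_p$, and the converse via the linear ODE $\dot V_i=\sum_j a_{ij}(\phi_t(p_0))V_j$ for $V_i(t)=(\phi_t^*Y_i)_{p_0}$, whose fundamental matrix keeps each $V_i(t)$ inside $\operatorname{span}\{Y_1(p_0),\dots,Y_m(p_0)\}=E_{p_0}$. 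The only point you gloss over is that the frame $Y_1,\dots,Y_m$ and the coefficients $a_{ij}$ are only defined on a neighborhood of $p_0$, so the ODE argument directly gives $d\phi_t(E_{p_0})=E_{\phi_t(p_0)}$ only for small $t$; one then extends to the whole domain of the flow by observing that the set of admissible $t$ is open and closed in the interval of definition (or simply notes that the symmetry property is local and needs only to hold for each $\phi_t$ near each point). This is routine and does not affect the validity of the proof.
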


\begin{defi} A function $f\in\C$ is called a {\em first integral\/}
of a distribution~$E$ if it is constant on each integral manifold of~$E$.
\end{defi}
We shall denote the set of all first integrals of~$E$ by~$I(E)$.
It is easy to see that the following three conditions are
equivalent:
\begin{enumerate}
\item $f\in I(E)$;
\item $Yf=0$ for all $Y\in\D(E)$;
\item $df\in\L(E)$.
\end{enumerate}

\subsection{Differential forms with values in a Lie algebra}
Suppose $\g$ is an arbitrary finite-dimensional Lie algebra and $M$ is a smooth
manifold. Denote by $\Lambda(M)$ the graded algebra (with respect to exterior
product) of differential $k$-forms on~$M$. 
Let $\Lambda(M,\g)=\Lambda(M)\otimes\g$. Then $\Lambda(M,\g)$ is an algebra
with respect to the operation 
$$ [\om_1\otimes x_1, \om_2\otimes x_2]=\om_1\wedge\om_2\otimes[x_1,x_2],\
\om_1,\om_2\in\Lambda(M),\,x_1,x_2\in\g.$$
For the sake of simplicity, when dealing with the elements of $\Lambda(M,\g)$,
we shall sometimes omit the symbol~$\otimes$.

The algebra $\Lambda(M,\g)$ inherits the graduation of $\Lambda(M)$: 
$$\Lambda^k(M,\g)=\Lambda^k(M)\otimes\g$$ 
and is therefore a graded algebra. Since the multiplication in a Lie algebra is
anticommutative, the multiplication in $\Lambda(M,\g)$ has the following property:
$$[\om_1,\om_2]=(-1)^{k_1k_2+1}[\om_1,\om_2],\quad\om_i\in\Lambda^k_i(M,\g).$$

If $e_1,\dots,e_n$ is a basis for the Lie algebra~$\g$, then every element
$\om\in\Lambda(M,\g)$ can be uniquely expressed in the form
$$\om=\om_1e_1+\dots+\om_ne_n,\quad \om_1,\dots,\om_n\in\Lambda(M).$$

The operation of exterior differentiation can also be extended to
$\Lambda(M,\g)$: 
$$d\colon\Lambda^k(M,\g)\to\Lambda^{k+1}(M,\g),\ \om\otimes 
x\mapsto d\om\otimes x.$$
Thus defined, it has the standard properties of exterior differentiation:
$$d\circ d=0,\quad d([\om_1,\om_2])=[d\om_1,\om_2]+(-1)^{k_1}[\om_1,d\om_2],\ 
\om_i\in\Lambda^{k_i}(M,\g).$$

Finally, notice that an element $\om\in\Lambda^k(M,\g)$ may be interpreted as a
family of skew-symmetric $k$-forms
$$\om_p\colon\Lambda^k(T_pM)\to\g,\quad p\in M$$
on the tangent spaces to $M$, with values in~$\g$.

\subsection{Transitive symmetry algebras}

Let $M$ --- be a smooth manifold of dimension $n+m$, and let $E$ be 
an $m$-dimensional completely integrable distribution on~$M$. 
Using the Frobenius theorem, it is easy to show that the 
distribution~$E$ is determined uniquely by its algebra $I(E)$ of first
integrals, namely
$$ E_p=\bigcup_{f\in I(E)} \ker d_pf \quad \text{for all }p\in M.$$

The set of all symmetries of~$E$ is precisely the 
normalizer of the subalgebra $\D(E)$ in the Lie algebra $\D(M)$, and hence it
is a subalgebra of $\D(M)$. We note that $\D(E)$ is, by
definition, an ideal in $\sym(E)$. The elements of $\D(E)$ are called the 
{\em characteristic symmetries\/} of~$E$.

It follows immediately from the definitions that 
\begin{alignat*}{2}
  &1.\quad& &\sym(E)(I(E))\subset I(E);\\ 
  &2.& &I(E)\cdot \sym(E)
  \subset \sym(E).
\end{alignat*}

Let $\g$ be a subalgebra $\g$ of $\sym(E)$.  If $p$ is a point in~$M$, then let
\begin{align*}
  \g(p)&=\{X_p \in T_pM\mid X\in \g\};\\ \g_p&= \{X\in \g\mid X_p\in E_p\}.
\end{align*} 
We point out that $\g(p)$ is a subspace of $T_p(M)$, whereas $\g_p$ is a
subspace (and indeed a subalgebra) of~$\g$.

Let $\aa=\g\cap\D(E)$. Since $\D(E)$ is an ideal in $\sym(E)$, we see that
$\aa$ is an ideal in~$\g$. Notice that $\aa$ can also be defined as
$\aa=\cap_{p\in M}\g_p$.

\begin{defi}
  A Lie algebra $\g\subset \sym(E)$ is called a {\em transitive symmetry
    algebra} of~$E$ if $\g(p)+E_p=T_pM$ for all $p\in M$. 

  A transitive symmetry algebra $\g\subset \sym(E)$ is called {\em simply
    transitive} if $\g_p=\aa$ for all $p\in M$.
\end{defi}
 
Let $\g\subset \sym(E)$ be a symmetry algebra of~$E$, and let 
$G$ be the local Lie transformation group generated by~$\g$.
Then $G$ preserves the set $\M$ of all maximal integral manifolds of~$E$, and
moreover the following assertions are true:
\begin{enumerate}
\item The ideal $\aa$ is zero if and only if the action of~$G$ on~$\M$ is
  locally effective.
\item The subalgebra $\g_p$ is precisely the Lie algebra of the subgroup
  $G_p=\{g\in G\mid g.L_p\subset L_p\}$;
\item The Lie algebra $\g$ is transitive if and only if the action of~$G$
  on~$M$ is locally transitive.             
\item The Lie algebra $\g$ is simply transitive if and only if 
  $G_p$~does not depend on~$p$ and hence coincides with the ineffectiveness
  kernel of the action of~$G$ on~$\M$.
\end{enumerate}

\subsection{Normalizer theorem}
\label{sec:n}

Let $\g$ be a transitive symmetry algebra of~$E$. We fix a point~$a$ in~$M$ and
consider the following set of points in~$M$:
$$\{p\in M\mid \g_p=\g_a\}.$$ 
Let $S_a$ denote the connected component of this set that contains~$a$. 

Since $\g$ is a transitive symmetry algebra, there exist $n$ vector fields 
$X_1,\dots,X_n\in\g$ such that the vectors
$$(X_1)_a,\dots,(X_n)_a\in T_aM$$ 
form a basis for the complement of $E_a$ in $T_aM$, and hence this will also be
true in some neighborhood~$U$ of~$a$:
$$ \langle (X_1)_p,\dots,(X_n)_p\rangle\oplus E_p=T_pM \text{ for all }p\in
U.$$

In the neighborhood~$U$, every vector field $Y\in\D(M)$ can be
written uniquely in the form 
\begin{equation}\label{1}
  Y=f_1X_1+\dots +f_nX_n \pmod{\D(E)}.
\end{equation}
Note that $Y\in\g$ belongs to $\g_p$ ($p\in U$) if and only
if 
$$ f_1(p)=\dots=f_n(p)=0.$$

Let $\F=\{f_\alpha\}$ be the family of all functions that appear in the
expansion~(\ref{1}) for all $Y\in\g_a$. Note that, for every $p\in M$, the
subalgebra $\g_p$ has the same codimension in~$\g$, which is equal to 
$\codim(E)$. Therefore, the equality $\g_p=\g_a$ is equivalent to the
inclusion $\g_p\subset\g_a$, which, for $p\in U$, can be written as
\begin{equation}\label{2}
  f(p)=0\quad \forall f\in\F.
\end{equation}
Thus, in a neighborhood of~$a$, the subset $S_a$ is given by the simultaneous
equations~(\ref{2}). Moreover, it is easy to show that $S_a$ is a 
submanifold in~$M$.

\begin{thr}\label{tn} 
Let $S=S_a$. Then
\begin{enumerate}
\item $T_pS\supset E_p$;
\item the subspace
  $$\{X\in\g\mid X_p\in T_pS\}\subset\g$$ 
  coincides with
  $N_{\g}(\g_p)=N_{\g}(\g_a)$,
\end{enumerate}
for all $p\in S$.
\end{thr}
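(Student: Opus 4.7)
The plan is to exploit the explicit local description of $S_a$ given in the two paragraphs just before the theorem: near $a$, the subset $S_a$ is cut out by the simultaneous vanishing of the family $\F = \{f_\alpha\}$ of coefficient functions arising from expansions
\[
Y = f_1 X_1 + \dots + f_n X_n \pmod{\D(E)}, \qquad Y \in \g_a,
\]
where $X_1,\dots,X_n \in \g$ are chosen so that $(X_1)_p,\dots,(X_n)_p$ complement $E_p$ in $T_pM$ on a neighbourhood~$U$ of~$a$.

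For part (1), the key step will be to prove that every $f_\alpha \in \F$ is a first integral of~$E$. I would take arbitrary $W \in \D(E)$ and $Y \in \g_a$ with $Y = \sum_i f_i X_i + Z$, $Z \in \D(E)$, and expand
\[
[W,Y] = \sum_i (Wf_i)\,X_i + \sum_i f_i\,[W,X_i] + [W,Z].
\]
The left-hand side lies in $\D(E)$ since $\D(E)$ is an ideal in $\sym(E)$; the two rightmost sums lie in $\D(E)$ because $X_i \in \sym(E)$ and $\D(E)$ is a Lie algebra by Frobenius. Hence $\sum_i(Wf_i)X_i \in \D(E)$, and independence of the $(X_i)_p$ modulo $E_p$ forces $Wf_i = 0$, so $f_i \in I(E)$. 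The functions of $\F$ therefore are constant along every integral leaf; since they vanish at~$a$, they vanish on the leaf $L_a$ through $a$, yielding $L_a \subset S_a$ and $E_a \subset T_aS_a$. Repeating the construction with $p \in S$ in place of $a$ (permissible since $\g_p = \g_a$) gives $E_p \subset T_pS_a$ for all $p \in S$.

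For part (2), I would compute $[X,Y]$ modulo $\D(E)$ for arbitrary $X \in \g$ and $Y \in \g_a$, using the same expansion of~$Y$. At a point $p \in S$ the terms $f_i(p)[X,X_i]$ drop out (because $f_i(p) = 0$) and one obtains
\[
[X,Y]_p \;\equiv\; \sum_i X(f_i)(p)\,(X_i)_p \pmod{E_p}.
\]
By independence of the $(X_i)_p$ modulo $E_p$, the condition $[X,Y]_p \in E_p$ for every $Y \in \g_a$, i.e.\ $X \in N_\g(\g_a)$, is equivalent to $X(f_\alpha)(p) = 0$ for every $f_\alpha \in \F$, which is exactly the defining condition $X_p \in T_pS_a$. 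Since $\g_p = \g_a$ on $S$, this identifies $\{X \in \g \mid X_p \in T_pS\}$ with $N_\g(\g_a) = N_\g(\g_p)$.

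I expect the main obstacle to be the first-integral calculation in part~(1); recognizing that bracketing elements of $\g_a$ against characteristic fields forces each coefficient $f_i$ to be constant along~$E$ is the one genuine observation. Once this is established, part~(2) is essentially bookkeeping: differentiating the same expansion along $X$ and using the ideal property of~$\D(E)$ in~$\sym(E)$ yields the identification of the normalizer with $\{X \in \g : X_p \in T_pS\}$ directly.
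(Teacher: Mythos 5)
Your proposal is correct and follows essentially the same route as the paper: both parts rest on bracketing the expansion $Y=\sum_i f_iX_i \pmod{\D(E)}$ against characteristic fields (to show each $f_i\in I(E)$) and against an arbitrary $X\in\g$ at a point of $S$ (to identify $\{X\mid X_p\in T_pS\}$ with $N_\g(\g_a)$ via $X(f_i)(p)=0$). The only difference is cosmetic: you make explicit the passage from ``the $f_i$ are first integrals'' to $E_p\subset T_pS$ via the leaf through $a$, which the paper leaves implicit.
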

\begin{proof}
  Since $\g_p$ with $p\in S$ is independent of~$p$, it will suffice to prove
  the theorem only for an arbitrary point $p\in S$, say $p=a$. Furthermore,
  since all assertions of the theorem have local character, we can restrict
  our consideration to the neighborhood $U\subset M$ where $S$~is given by
  equations~(\ref{2}). 

  1. It suffices to verify that all the functions $f\in \F$ are first integrals
  of~$E$ in~$U$. If $Y\in\g_a$ and $Z\in \D(E)$, then by~(\ref{1}) we
  have 
\begin{multline*}
  [Z,Y-f_1X_1-\dots-f_nX_n]=[Z,Y]-f_1[Z,X_1]-\dots-f_n[Z,X_n]\\ 
  -Z(f_1)X_1-\dots-Z(f_n)X_n\in\D(E).
\end{multline*}
Now since $Y,~X_1,\dots,~X_n\in\sym(E)$, we see that
$[Z,Y],~[Z,X_1],\dotsc$, $[Z,X_n]\in\D(E)$, and hence
$$ Z(f_1)X_1+\dots+Z(f_n)X_n\in\D(E).$$
But this is possible only if $Z(f_1)=\dots=Z(f_n)=0$, so that $f_1,\dots,f_n$
are indeed first integrals of~$E$.

2. Let $Z\in\g$. Since in a neighborhood of~$a$, the set $S$~is given
by~(\ref{2}), we see that $Z_a\in T_aS$ if and only if 
$d_af(Z_a)=Z(f)(a)=0$ for all $f\in\F$.

Using~(\ref{1}) with $Y\in\g_a$, we get
\begin{multline*}
  [Z,Y]=[Z, f_1X_1+\dots+f_nX_n]=Z(f_1)X_1 +\dots+Z(f_n)X_n\\ 
  f_1[Z,X_1]+\dots+f_n[Z,X_n] \pmod{\D(E)}.
\end{multline*}
This last equality, considered at the point~$a$, gives
$$ [Z,Y]_a=Z(f_1)(a)\,(X_1)_a +\dots+Z(f_n)(a)\,(X_n)_a \pmod{E_a}.$$
It follows that the condition $[Z,Y]\in\g_a$ is equivalent to
$$Z(f_1)(a)=\dots=Z(f_n)(a)=0.$$ 
Thus $Z_a$ lies in $T_aS$ if and only if $[Z,Y]\in\g_a$ for all $Y\in\g_a$,
that is if $Z\in N_{\g}(\g_a)$. 
\end{proof}

\begin{cor}
  If the subalgebra $\g_a$ coincides with its own normalizer, then $S$ is
  a maximal integral manifold of the distribution~$E$.
\end{cor}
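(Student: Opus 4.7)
The plan is to combine the two parts of Theorem~\ref{tn} with a dimension count carried out in the quotient $T_pS/E_p$. Part~1 already yields the inclusion $E_p\subset T_pS$ for every $p\in S$, so the corollary reduces to establishing the reverse inclusion $T_pS\subset E_p$, or equivalently $\dim(T_pS/E_p)=0$, under the hypothesis $N_\g(\g_a)=\g_a$. Once this is done, $\dim S=\dim E_p=\dim E$, and $S$ is an integral manifold of maximal dimension.

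To control $T_pS/E_p$, I would use the transitivity of~$\g$. Fix $p\in S$ and consider the evaluation-mod-$E$ map $\phi_p\colon\g\to T_pM/E_p$, $X\mapsto X_p+E_p$. Transitivity of~$\g$ makes $\phi_p$ surjective, and its kernel is by definition $\g_p$, which equals $\g_a$ since $p\in S$. Now restrict $\phi_p$ to the subalgebra $N_\g(\g_a)$. By part~2 of Theorem~\ref{tn} applied at~$p$ (and noting $N_\g(\g_p)=N_\g(\g_a)$ because $\g_p=\g_a$), the elements of $N_\g(\g_a)$ are exactly those $X\in\g$ with $X_p\in T_pS$, so $\phi_p$ maps $N_\g(\g_a)$ into $T_pS/E_p$. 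To see that this restriction is surjective onto $T_pS/E_p$, pick any $v\in T_pS$; by surjectivity of $\phi_p$ on all of~$\g$, there exists $X\in\g$ with $X_p-v\in E_p\subset T_pS$, whence $X_p\in T_pS$ and hence $X\in N_\g(\g_a)$. The kernel of the restricted map is $N_\g(\g_a)\cap\g_p=\g_a$ (using $\g_a\subset N_\g(\g_a)$).

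Consequently $\dim(T_pS/E_p)=\dim N_\g(\g_a)-\dim\g_a$, which vanishes precisely when $N_\g(\g_a)=\g_a$. Under this hypothesis $T_pS=E_p$ for every $p\in S$, so $S$ is an integral manifold of~$E$ of dimension $\dim E$, i.e.\ a maximal integral manifold. The only mildly subtle step is the surjectivity of $\phi_p|_{N_\g(\g_a)}$ onto $T_pS/E_p$; this is not a deep obstacle but requires combining transitivity on $T_pM/E_p$ with the containment $E_p\subset T_pS$ supplied by part~1 of the Normalizer Theorem.
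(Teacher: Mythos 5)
Your argument is correct and follows essentially the same route as the paper: both combine part~1 (giving $E_p\subset T_pS$) and part~2 (identifying $\{X\in\g\mid X_p\in T_pS\}$ with $N_{\g}(\g_a)$) of the Normalizer Theorem with the transitivity of~$\g$ to force $T_pS=E_p$. Your version merely makes explicit, via the quotient map $\g\to T_pM/E_p$ and the resulting identification $T_pS/E_p\cong N_{\g}(\g_a)/\g_a$, the dimension count that the paper's one-line proof leaves implicit.
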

\begin{proof}
  Indeed, on the one hand, $T_pS\supset E_p$ for all  $p\in S$, but on the other
  hand, $T_pS\cap\g_p=E_p\cap\g_p$ for all $p\in S$, which is possible
  only if $T_pS=E_p$, so that $S$ is a maximal integral manifold of~$E$. 
\end{proof}

Consider the restriction~$\tilde E$ of the distribution~$E$ to~$S$ and also
the set  
$\tilde {\g}=N(\g_a)$, restricted to $S$. Then $\tilde {\g}$ is clearly a
subalgebra of $\D(S)$ (which need not be isomorphic with $N(\g_a)$), and as
before, $\tilde {\g}\subset\sym(\tilde E)$. 

\begin{prop}\label{pn}
  The symmetry algebra $\tilde {\g}$ of~$\tilde E$ is simply transitive, 
  and the ideal $\tilde{\aa}=\tilde{\g}\cap\D(\tilde E)$ coincides with~$\g_a$. 
\end{prop}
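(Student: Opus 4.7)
The plan is to deduce everything from Theorem~\ref{tn} together with the transitivity of $\g$ on $M$, treating the four claims (well-definedness on $S$, symmetry property, transitivity, and simple transitivity with the identification of $\tilde{\aa}$) in turn.

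First I would check that the elements of $N_{\g}(\g_a)$, when restricted to $S$, are genuine vector fields on $S$: this is immediate from part~2 of Theorem~\ref{tn}, which says $X_p\in T_pS$ for every $p\in S$ and every $X\in N_{\g}(\g_a)$. The fact that $\tilde{\g}$ consists of symmetries of $\tilde E$ then follows because $\tilde E_p=E_p\subset T_pS$ and the restriction to $S$ of any $X\in\g\subset\sym(E)$ that happens to be tangent to $S$ automatically preserves $E\cap TS=\tilde E$; equivalently, for $Y\in\D(\tilde E)$ I would extend $Y$ locally to a section of $\D(E)$ and observe that $[X,Y]\in\D(E)$ and is tangent to $S$.

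Next I would establish transitivity: given $p\in S$ and $v\in T_pS$, use transitivity of $\g$ on $M$ to write $v=Y_p+e$ with $Y\in\g$ and $e\in E_p\subset T_pS$; then $Y_p=v-e\in T_pS$, so by part~2 of Theorem~\ref{tn} $Y\in N_{\g}(\g_a)$, and hence $v\in\tilde{\g}(p)+\tilde E_p$. This gives $\tilde{\g}(p)+\tilde E_p=T_pS$.

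Finally, to identify $\tilde{\aa}$ and prove simple transitivity, I would argue as follows. If $X\in\g_a$, then since $\g_p=\g_a$ for all $p\in S$, we have $X\in\g_p$, i.e., $X_p\in E_p$, for every $p\in S$; also $\g_a\subset N_{\g}(\g_a)$, so the restriction $X|_S$ belongs to $\tilde{\g}\cap\D(\tilde E)=\tilde{\aa}$. Conversely, if $\tilde X=X|_S$ with $X\in N_{\g}(\g_a)$ and $X_p\in E_p$ for all $p\in S$, then in particular $X_a\in E_a$, so $X\in\g_a$. This yields $\tilde{\aa}=\g_a|_S$ (which is the identification meant in the statement). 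The same computation, applied at an arbitrary $p\in S$, gives $\tilde{\g}_p=\{X|_S\mid X\in N_{\g}(\g_a),\ X_p\in E_p\}=\{X|_S\mid X\in\g_p\}=\g_a|_S=\tilde{\aa}$, which is simple transitivity.

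The computation is essentially bookkeeping; the only subtle point, and the main thing to get right, is the transitivity step, where one must use part~2 of Theorem~\ref{tn} to upgrade a generic element $Y\in\g$ whose value $Y_p$ lies in $T_pS$ to an element of the normalizer $N_{\g}(\g_a)$. Once this is in hand, the remaining identifications are formal.
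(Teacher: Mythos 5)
Your proof is correct and follows essentially the same route as the paper: transitivity is obtained from $T_pM=\g(p)+E_p$ together with part~2 of Theorem~\ref{tn} (your elementwise version of the paper's chain of subspace equalities), and simple transitivity follows from the observation that $\tilde\g_p=\g_p=\g_a$ for all $p\in S$, hence $\tilde\aa=\g_a$. The only difference is that you also verify explicitly that $\tilde\g\subset\sym(\tilde E)$, which the paper asserts without proof just before the proposition.
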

\begin{proof}
  The transitivity of $\tilde {\g}$ follows from
\begin{multline}
  T_pS=T_pS\cap(E_p+\g(p))=E_p+T_pS\cap\g(p)=E_p+N(\g_p)(p)\\ =
  E_p+N(\g_a)(p)= E_p+\tilde {\g}(p) \quad \text{for all }p\in S.
\end{multline}

Further, it is clear that $\tilde {\g}_p=\g_p=\g_a$ for all $p\in S$, and
therefore $\tilde{\aa}=\cap_{p\in S} \tilde {\g}_p=\g_a$, so that $\tilde {\g}$
is a simply transitive symmetry algebra.  
\end{proof}

Thus, {\em the problem of integration of a distribution with the help of
  symmetries can be divided into the following two parts\/}:
\begin{enumerate}
\item the construction of manifolds $S_a$;
\item the integration of the distributions $\tilde E$ on each of the
  manifolds~$S_a$ by means of the simply transitive symmetry algebras~$\tilde{\g}$.
  \end{enumerate} 

\subsection{Simply transitive symmetry algebras and $\g$-structures}

Let $E$ be a completely integrable distribution on~$M$, let $\h$ be a simply
transitive symmetry algebra of~$E$, let $\aa=\h\cap\D(E)$ be the ideal in~$\h$
consisting of
characteristic symmetries, and finally let $\g=\h/\aa$.

We define a $\g$-valued 1-form $\omega$ on~$M$ by requiring that
\begin{enumerate}
\item[(i)] $\omega(Y)=0$ for all $Y\in\D(E)$;
\item[(ii)] $\omega(X)=X+\aa$ for all $X\in\h$.
\end{enumerate}
It is easy to verify that $\omega$~is well-defined. 

\begin{prop}\label{p1}
  The form $\omega$ has the following properties:
  \begin{enumerate}
  \item $d\omega(X_1,X_2)=-[\omega(X_1),\omega(X_2)]$ for all 
    $X_1,X_2\in\D(M)$;
  \item $\ker \omega_p=E_p,\ \im \omega_p=\g$ for all $p\in M$.
  \end{enumerate}
\end{prop}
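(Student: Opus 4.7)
The plan is to verify part~2 first (which also certifies that $\omega$ is pointwise well-defined) and then derive part~1 by reducing the identity to a local frame via $C^\infty(M)$-bilinearity. For part~2, the key input is the simple transitivity of $\h$: at every $p\in M$ one has $\h(p)+E_p=T_pM$ together with $\h_p=\aa$. Consequently, any $v\in T_pM$ admits a decomposition $v=X_p+v'$ with $X\in\h$, $v'\in E_p$, and the coset $X+\aa\in\g$ is independent of the choice, since two representatives differ by an $X\in\h$ with $X_p\in E_p$, hence by an element of $\h_p=\aa$. Setting $\omega_p(v)=X+\aa$ is compatible with conditions (i) and (ii) (for $X\in\aa\subset\D(E)$ one has $X+\aa=0$), defines a smooth 1-form (using a local $\h$-frame), and immediately yields $\ker\omega_p=E_p$ and $\im\omega_p=\g$.

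For part~1 I would first note that both sides of the identity $d\omega(X_1,X_2)=-[\omega(X_1),\omega(X_2)]$ are $C^\infty(M)$-bilinear and skew in $X_1,X_2\in\D(M)$: the left-hand side by the standard computation with the Cartan formula for $d\omega$, the right-hand side by $\omega(fX)=f\omega(X)$. Hence it suffices to check the identity on a local frame, and by transitivity I can locally take one of the form $X_1,\ldots,X_n\in\h$ and $Y_1,\ldots,Y_m\in\D(E)$.

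Three case distinctions then complete the verification. If $X,X'\in\h$, the functions $\omega(X)=X+\aa$ and $\omega(X')=X'+\aa$ are \emph{constant} $\g$-valued, so the Cartan formula collapses to $d\omega(X,X')=-\omega([X,X'])$; since $[X,X']\in\h$ and the bracket on $\g=\h/\aa$ is inherited from $\h$, this equals $-[X+\aa,X'+\aa]=-[\omega(X),\omega(X')]$. If $X\in\h$ and $Y\in\D(E)$, then $\omega(Y)=0$ while $\omega(X)$ is constant, so $d\omega(X,Y)=-\omega([X,Y])$; $X\in\sym(E)$ forces $[X,Y]\in\D(E)$, whence $\omega([X,Y])=0=-[\omega(X),0]$. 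Finally, for $Y,Y'\in\D(E)$, both values $\omega(Y),\omega(Y')$ vanish, and Frobenius (using that $E$ is completely integrable) gives $[Y,Y']\in\D(E)$, so both sides are zero.

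The only subtle point is the pointwise well-definedness of $\omega$, which genuinely needs the constancy $\h_p=\aa$ in $p$; without simple transitivity the coset $X+\aa$ representing a given $v$ would depend on the decomposition. Once this is handled, the Maurer--Cartan identity is just bookkeeping on the distinguished frame.
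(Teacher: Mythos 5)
Your proposal is correct and follows essentially the same route as the paper: reduce the Maurer--Cartan identity to the three cases ($\h$--$\h$, $\h$--$\D(E)$, $\D(E)$--$\D(E)$) via $C^\infty(M)$-bilinearity of both sides, and read off part~2 from the definition of $\omega$. The only difference is that you spell out the pointwise well-definedness of $\omega$ via $\h_p=\aa$, which the paper asserts without proof just before the proposition; that is a reasonable addition but not a change of method.
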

\begin{proof}

  1. Since a vector field on~$M$ may be (uniquely) written in the form
  $$ f_1X_1+\dots+f_nX_n+Y, \qquad f_1,\dots,f_n\in C^\infty(M),\ Y\in\D(E)$$ 
and since both sides of the desired equality are 
$C^\infty(M)$-bilinear, we need to verify this equality only when
\begin{enumerate}
\item[(i)] $X_1,X_2\in \D(E)$;
\item[(ii)] $X_1\in\D(E),\ X_2\in\h$;
\item[(iii)] $X_1,X_2\in\h$.
\end{enumerate}
In case (i) both sides of our equality vanish identically. Note that 
$\omega(X)=\const$ for all $X\in\h$. We have
$$d\omega(X_1,X_2)=-\omega([X_1,X_2])+X_1\omega(X_2)-X_2\omega(X_1)$$ for all
$X_1,X_2\in\D(M)$. It follows that in case~(ii)  both sides of the
equality  are also zero, because $[X_1,X_2]\in\D(E),\ \omega(X_1)=0$, and
$\omega(X_2)=\const$. 

In case (iii) we have $[X_1,X_2]\in\h$, so that 
\begin{multline*}
  d\omega(X_1,X_2)=-\omega([X_1,X_2])=-[X_1,X_2]+\aa\\ 
  =-[X_1+\aa,X_2+\aa]=-[\omega(X_1),\omega(X_2)],
\end{multline*} 
as was to be proved.

\nopagebreak
2. The second statement follows immediately from the definition of~$\omega$. 
\end{proof}

Recall that a {\em $\g$-structure on the manifold~$M$\/} is a
$\g$-valued 1-form~$\omega$ satisfying the first condition of Proposition~
\ref{p1}. We shall say that a $\g$-structure $\omega$ is {\em nondegenerate\/}
if $\im \omega_p=\g$ for all $p\in M$. We thus see that any simply transitive
symmetry algebra of~$E$ defines a nondegenerate $\g$-structure on~$M$.

Conversely, a nondegenerate $\g$-structure $\omega$ determines a distribution~
$E$ on~$M$ by assigning to each point $p\in M$ the subspace $E_p=\ker
\omega_p$. Moreover, for any $\overline X\in\g$, the equality
$\omega(X)=\overline X$ determines a unique (up to $\D(E)$) vector field~$X$ 
on~$M$. 

\begin{prop}\ 

  1. The distribution $E$ given by a $\g$-structure $\omega$ is completely
integrable. 

  2. For any $\overline X\in\g$, the vector field $X\in\D(E)$ is a symmetry
of~$E$. 

  3. The set $\h=\{X\in\D(E)\mid \omega(X)=\const\}$ forms a simply transitive
symmetry algebra of~$E$ with the following properties:
\begin{itemize}
\item[(i)] $\h\supset\D(E)$ and $\h/\D(E)\cong\g$;
\item[(ii)] any simply transitive symmetry algebra of~$E$ which determines the
 $\g$-structure $\omega$ is contained in~$\h$.
\end{itemize}  
\end{prop}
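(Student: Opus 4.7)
The plan hinges on combining the two expressions for $d\omega$: the Maurer--Cartan-type identity
$d\omega(X_1,X_2) = -[\omega(X_1),\omega(X_2)]$
that is the defining property of a $\g$-structure (established in Proposition~\ref{p1}), and the Cartan formula
$d\omega(X_1,X_2) = X_1\omega(X_2) - X_2\omega(X_1) - \omega([X_1,X_2])$.
Equating these gives an identity for $\omega([X_1,X_2])$ that drives every part of the argument.

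For part~1, I apply the two identities to $Y_1, Y_2 \in \D(E)$: since $\omega(Y_i)=0$, the left-hand side vanishes and the right-hand side collapses to $-\omega([Y_1,Y_2])$, so $[Y_1,Y_2]\in\ker\omega = \D(E)$. Frobenius then gives complete integrability. For part~2, given $\overline X \in \g$, nondegeneracy of $\omega$ produces a field $X$ with $\omega(X)=\overline X$ (unique modulo $\D(E)$); for any $Y\in\D(E)$, both $\omega(X)=\overline X$ and $\omega(Y)=0$ are constant, so Cartan's formula reduces to $-\omega([X,Y]) = -[\overline X,0] = 0$, giving $[X,Y]\in\D(E)$. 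Thus $X$ normalises $\D(E)$, i.e.\ $X\in\sym(E)$.

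For part~3 I would proceed through several short steps. First, $\D(E)\subset\h$ because $0$ is constant. Closure of $\h$ under brackets follows from the same identity: for $X_1,X_2\in\h$ the terms $X_i\omega(X_j)$ vanish, so $\omega([X_1,X_2]) = [\omega(X_1),\omega(X_2)]$ is constant and $[X_1,X_2]\in\h$; simultaneously this shows that $X\mapsto\omega(X)$ is a Lie-algebra homomorphism $\h\to\g$. Each $X\in\h$ is a symmetry of $E$ by exactly the argument of part~2. For transitivity, given $p\in M$ and $v\in T_pM$, set $\overline v = \omega_p(v)\in\g$ (nondegeneracy) and lift to $X\in\h$ with $\omega(X)\equiv\overline v$; then $X_p - v \in \ker\omega_p = E_p$, so $\h(p)+E_p = T_pM$. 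Simple transitivity: for $X\in\h$, the condition $X_p\in E_p$ reads $\omega_p(X_p)=0$, which, since $\omega(X)$ is constant, is independent of $p$, so $\h_p = \aa = \D(E)$. The kernel of $\omega\colon\h\to\g$ is $\D(E)$ and the map is surjective by nondegeneracy, yielding (i). For (ii), if $\h'$ is any simply transitive symmetry algebra whose associated 1-form (built by the recipe of the preceding paragraph) is $\omega$, then by construction $\omega(X) = X + (\h'\cap\D(E))$ is a constant in $\g$ for every $X\in\h'$, so $\h'\subset\h$.

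Essentially everything reduces to mechanical juggling of the two formulas for $d\omega$; the only point requiring care is that in part~3 one is handling several simultaneous structures (the quotient $\h/\D(E)$, the homomorphism into $\g$, and the comparison with an arbitrary $\h'$ in (ii)), and I expect the main bookkeeping hurdle to be verifying that the map $\omega\colon\h\to\g$ is well-defined as a Lie-algebra homomorphism consistent with the original quotient description of $\omega$---but this amounts to re-reading the construction. No deeper obstacle is anticipated.
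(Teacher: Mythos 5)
Your proposal is correct and follows essentially the same route as the paper: both arguments combine the $\g$-structure identity $d\omega(X_1,X_2)=-[\omega(X_1),\omega(X_2)]$ with the Cartan formula for $d\omega$ to show $\omega([X_1,X_2])$ vanishes or is constant in each case. The only difference is that you spell out the transitivity and simple-transitivity checks in part~3, which the paper dismisses as immediate from the definitions.
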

\begin{proof}\ 

  1. Indeed, given $Y_1,Y_2\in\D(E)$, we have
\begin{multline*}
  \omega([Y_1,Y_2])=-d\omega([Y_1,Y_2])+Y_1\omega(Y_2)-Y_2\omega(Y_1)\\ 
  =[\omega(Y_1),\omega(Y_2)]+Y_1\omega(Y_2)-Y_2\omega(Y_1)=0,
\end{multline*}
since $\omega(Y_1)=\omega(Y_2)=0$.  Therefore $[Y_1,Y_2]\in\D(E)$, and the
distribution~$E$ is completely integrable. 

2. If $\overline{X}\in\g$ and $Y\in\D(E)$, we have
$$ \omega([X,Y])=[\omega(X),\omega(Y)]+X\omega(Y)-Y\omega(X)=0,$$ because
$\omega(Y)=0$ and $\omega(X)=\overline{X}=\const$.

3. The inclusion $\h\supset\D(E)$ is obvious. It is immediate from the
definitions that the mapping $\h\to\g$ such that $X\mapsto\omega(X)$ is a
surjective homomorphism of Lie algebras, and its kernel coincides with
$\D(E)$. Hence $\h/\D(E)\cong\g$. If a simply transitive symmetry algebra~$\h'$
determines the same $\g$-structure~$\omega$, then $\omega(X)=\const$ for all
$X\in\h'$, so that $\h'\subset\h$. 
\end{proof}

We have thus proved that {\em completely integrable distributions on~$M$
 with simply
transitive symmetry algebras are in one-to-one correspondence with
nondegenerate $\g$-structures on~$M$.}

\subsection{Integration of $\g$-structures}

\subsubsection{Integrals}
Let $\omega$ be a $\g$-structure on~$M$, not necessarily nondegenerate, and let
$G$~be a Lie group whose Lie algebra is isomorphic with~$\g$. We identify
$\g$ with $T_eG$ and also, by means of the \textbf{right} 
translations, with all tangent
spaces~$T_gG$. 

\begin{defi}
  A mapping $f\colon M\to G$ is called an {\em integral of the $\g$-structure
    $\omega$\/} if the differential $d_pf\colon T_pM\to T_{f(p)}G\equiv\g$ of
~$f$ coincides with $\omega_p$ for all $p\in M$.
\end{defi}

A $\g$-structure $\omega$ is said to be integrable if there exists an integral 
$f\colon M\to G$ of~$\g$. 

\begin{thr}\label{t2}
  Any $\g$-structure $\omega$ is locally integrable. Moreover, given
  $a\in M$ and $g_0\in G$, there is a neighborhood~$U$ of~$a$ such that there
exists a unique integral $f$ of $\omega|_U$ satisfying the ``initial''
condition $f(a)=g_0$.
\end{thr}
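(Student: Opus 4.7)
The plan is to derive existence and uniqueness of the integral from the Frobenius theorem applied to a natural distribution on the product $M\times G$.

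Let $\theta$ denote the Maurer--Cartan form on $G$ corresponding to the identification $T_gG\cong\g$ used in the definition of integral (so that the condition $d_pf=\omega_p$ is precisely $f^*\theta=\omega$); by the special case $M=G$ of Proposition~\ref{p1}.1, $\theta$ satisfies the same structure equation
$$d\theta(Y_1,Y_2)=-[\theta(Y_1),\theta(Y_2)]$$
as $\omega$. On $M\times G$, with projections $\pi_M,\pi_G$, I introduce the $\g$-valued 1-form $\eta=\pi_G^*\theta-\pi_M^*\omega$ and the distribution $E=\ker\eta$. Since $\theta_g\colon T_gG\to\g$ is a linear isomorphism, the form $\eta_{(p,g)}$ is surjective onto $\g$ at every point, so $E$ has rank $\dim M$ and is everywhere transverse to the vertical fibers $\{p\}\times G$; in particular, $E_{(a,g_0)}$ projects isomorphically onto $T_aM$.

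The main step is to verify that $E$ is involutive. If $Y_1,Y_2$ are tangent to $E$ at a point $(p,g)$, then by definition $\pi_G^*\theta(Y_i)=\pi_M^*\omega(Y_i)$, and applying the common structure equation to both $\theta$ and $\omega$ gives
$$d\eta(Y_1,Y_2)=-[\pi_G^*\theta(Y_1),\pi_G^*\theta(Y_2)]+[\pi_M^*\omega(Y_1),\pi_M^*\omega(Y_2)]=0.$$
Thus $d\eta$ vanishes on $\D(E)\times\D(E)$, so by the Frobenius theorem $E$ is completely integrable.

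Let $L$ be the maximal integral manifold of $E$ passing through $(a,g_0)$. Since $\pi_M|_L$ has invertible differential at $(a,g_0)$, it is a local diffeomorphism onto a neighborhood $U$ of $a$, and $L\cap(U\times G)$ is the graph of a unique smooth map $f\colon U\to G$ with $f(a)=g_0$. The relation $(v,d_pf(v))\in\ker\eta$ translates into $\theta_{f(p)}(d_pf(v))=\omega_p(v)$, i.e.\ $f^*\theta=\omega$, so $f$ is an integral. For uniqueness, the graph of any other integral $f'$ with $f'(a)=g_0$ is a connected integral submanifold of $E$ through $(a,g_0)$, hence lies in $L$ by the uniqueness clause of Frobenius; therefore $f'$ coincides with $f$ on the common domain.

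The one point that requires care is the sign convention: one must choose the Maurer--Cartan form of $G$ (left- versus right-invariant) so that it obeys exactly the same structure equation as the $\g$-structure $\omega$; with that choice fixed, the bracket cancellation in the involutivity computation above is automatic.
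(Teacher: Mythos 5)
Your proof is correct and follows essentially the same route as the paper: both integrate the graph distribution $H_{(p,g)}=\{X_p+\omega_p(X_p)\mid X_p\in T_pM\}$ on $M\times G$ (your $\ker\eta$) via the Frobenius theorem and recover $f$ from the integral manifold through $(a,g_0)$, using that this manifold projects diffeomorphically to $M$ near that point. The only difference is that you spell out the involutivity computation (and flag the attendant left- versus right-invariant Maurer--Cartan sign convention), a step the paper dismisses as ``easy to check.''
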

\begin{proof}
  Consider the distribution~$H$ on $M\times G$ defined by 
  $$H_{(p,g)}=\{X_p+\omega_p(X_p)\mid X_p\in T_pM\},\quad p\in M,\,g\in G.$$
  It is easy to check that $H$ is completely integrable and that the
dimension of~$H$ is equal to $\dim M$. If $\pi$ is the natural projection of
the direct product $M\times G$ onto~$M$, then, as easily follows from the
definition of~$H$, the mapping $d_{(p,g)}\pi$ determines a homomorphism of  
$H_{(p,g)}$ onto $T_pM$ for all $(p,g)\in M\times G$.

  It follows from the Frobenius theorem that there is a unique integral
manifold~$L$ of the distribution~$H$ that passes through $(a,g_0)$, and
$\pi|_L$ is a local diffeomorphism of $L$ and~$M$ at the point $(a,g_0)$.
  Therefore, in some neighborhood~$U$ of~$a$, there is a unique mapping
  $f\colon U\to M$ whose graph coincides with $L\cap(U\times G)$. It now
follows that $f$ is an integral of the $\g$-structure $\omega|_U$. 
\end{proof}

\begin{ex}
  Consider the trivial case that $G=\R$ and $\g$ is the one-dimensional
commutative Lie algebra. The corresponding $\g$-structure $\omega$ is then an
ordinary closed 1-form on~$M$, and its integrals are just integrals of a closed
form, that is the functions $f\colon M\to G\equiv\R$ such that
  $df=\omega$. Thus the integration of $\g$-structures may be viewed as a
generalization of the integration of closed $1$-forms.
\end{ex}

Let $H$ be the distribution on $M\times G$ defined by the
$\g$-structure~$\omega$. 
Then $H$~may be regarded as a flat connection on the trivial
principal $G$-bundle $\pi\colon M\times G\to G$. Since the connection~$H$ is
flat, there is a natural homomorphism $\phi\colon\pi_1(M)\to\Phi$ of the
fundamental group of~$M$ into the holonomy group of the connection~$H$, 
and $\omega$ is
globally integrable if and only if the group~$\Phi$ is trivial. In the general
case, $\phi$ is surjective, and any integrable manifold of the distribution~$H$
is a covering  of~$M$ with respect to the  projection~$\pi$ with
fiber~$\Phi$. It follows in particular that any $\g$-structure on a simply
connected manifold is globally integrable. 

Assume that the $\g$-structure $\omega$ is globally integrable, and let $f$~be
an integral of~$\omega$. Then the mapping $g.f\colon M\to G,\ p\mapsto f(p)g$
is obviously an integral of~$\omega$ too. Thus we have a right action of~$G$ on
the set of all integrals of~$\omega$. If, in addition, $M$ is connected, then 
by Theorem~\ref{t2} this action is transitive. In the trivial case that 
$G=\R$, this statement brings us to the well-known result that the integral of
an exact 1-form is unique up to the addition of an arbitrary constant. 

\subsubsection{Integrals of $\g$-structures and distributions}

Let $\omega$ be a nondegenerate $\g$-structure, and let $E$ be the completely
 integrable distribution defined by~$\omega$. Then every (local) integral of~ 
$\omega$ is a submersion, and therefore $f^{-1}(g)$, for any $g\in
G$, can be given a submanifold structure. 

\begin{prop}
  If $f$ is an integral of the $\g$-structure $\omega$, then for any $g\in G$,
  the connected components of the submanifold $f^{-1}(g)$ will be maximal 
  integral manifolds of the distribution~$E$.
\end{prop}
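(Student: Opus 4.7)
The plan is to exploit the defining property $d_p f = \omega_p$ (under the identification $T_{f(p)}G \cong \g$ via right translations) and the nondegeneracy hypothesis on $\omega$ to deduce that $f$ is a submersion with fiberwise tangent spaces equal to $E$.

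First I would observe that nondegeneracy of $\omega$ means $\im\omega_p = \g$ for every $p \in M$, so $d_p f$ is surjective at every point. Hence $f$ is a submersion, and for each $g \in G$ the preimage $f^{-1}(g)$ is an embedded submanifold of $M$ of codimension $\dim G = \dim \g$. Since $E_p = \ker\omega_p$ has codimension $\dim\g$ in $T_p M$, the submanifold $f^{-1}(g)$ has the same dimension as $E$.

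Next I would identify tangent spaces. For any $p \in f^{-1}(g)$,
\[
T_p f^{-1}(g) = \ker d_p f = \ker \omega_p = E_p,
\]
where the first equality is the standard description of tangent spaces to level sets of a submersion, and the second uses that $d_p f$ and $\omega_p$ coincide as maps $T_p M \to \g$. Thus $f^{-1}(g)$ is tangent to $E$ at each of its points, so every connected component of $f^{-1}(g)$ is an integral manifold of $E$.

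Finally, since each such connected component has dimension equal to $\dim E$, it attains the largest possible dimension among integral manifolds, and is therefore a maximal integral manifold of $E$. No step here looks delicate: the only subtle point is keeping straight the identification $T_{f(p)}G \cong \g$ by right translations under which $d_p f = \omega_p$, so that the kernel computation is legitimate; once that identification is invoked, the argument is essentially the implicit function theorem together with the dimension count.
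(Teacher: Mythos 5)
Your first half is exactly the paper's argument: nondegeneracy makes $f$ a submersion, and $T_pL=\ker d_pf=\ker\omega_p=E_p$ shows each connected component $L$ of $f^{-1}(g)$ is an integral manifold of~$E$ of dimension $\dim E$. That part is fine.

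The gap is in the last step. ``Maximal integral manifold'' here does not mean ``integral manifold of the largest possible dimension''; it means an integral manifold not properly contained in any other integral manifold (a leaf). Having dimension equal to $\dim E$ does not give this: an open proper subset of a leaf is an integral manifold of full dimension but is not maximal. So the dimension count alone does not finish the proof, and this is precisely the half of the statement that carries the content of the proposition (the point being that integrating the $\g$-structure produces whole leaves of~$E$, not just pieces of them). The paper closes this gap as follows: suppose $L'$ is a connected integral manifold of~$E$ containing~$L$. Then for every $p\in L'$ one has $d_pf(T_pL')=d_pf(E_p)=\omega_p(E_p)=\{0\}$, so $f$ is locally constant, hence constant, on the connected set~$L'$; therefore $L'\subset f^{-1}(g)$, and since $L'$ is connected and meets the connected component~$L$, in fact $L'\subset L$, whence $L'=L$. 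You should add this argument (or an equivalent one) to justify maximality.
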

\begin{proof}
  If $L$ is a connected component of $f^{-1}(g)$, then 
$$T_pL=\ker d_pf=\ker \omega_p=E_p$$
for all $p\in L$, so that $L$ is an integral manifold of~$E$.
Assume that $L'$ is an integral manifold of~$E$ containing~$L$. Since $L'$ is 
connected and $d_pf(T_pL')=d_pf(E_p)=\{0\}$ for all $p\in L'$, the mapping 
$f$ is constant on~$L'$. Thus $L'\subset L$, and therefore $L=L'$.   
\end{proof}

Thus the integration of the distribution~$E$ reduces essentially to the
integration of the corresponding $\g$-structure.

\subsubsection{Reduction}

Let $G_1$ be a normal Lie subgroup of~$G$, and let $\g_1$ be the corresponding
ideal in~$\g$. Consider the quotient group $G_2=G/G_1$, whose Lie algebra~$\g_2$
is $\g/\g_1$. Suppose that $\pi\colon G\to G_2$ is the canonical surjection
 and $d\pi\colon\g\to\g_2$ the corresponding surjection of Lie algebras. The
manifold~$M$ can be supplied with a natural $\g_2$-structure $\omega_2$:
$$(\omega_2)_p=d\pi\circ\omega_p\quad\text{ for all }p\in M.$$
Let $f$~be an integral of the $\g$-structure $\omega$. Then 
$\pi\circ f$ is obviously an integral of the $\g_2$-structure $\omega_2$. 

Now assume that  the $\g$-structure $\omega$ is nondegenerate; then the
$\g_2$-structure~$\omega_2$ is also nondegenerate. If $f_2$ is an integral
of~$\omega_2$ and $L=f_2^{-1}(g)$ is the inverse image of an arbitrary point
$g\in G_2$, consider the restriction $\omega_1=\omega|_L$. Then
$(\omega_1)_p(T_pL)\subset \ker d\pi=\g_1$ for all $p\in L$. Thus $\omega_1$
may be considered as a $\g_1$-structure on~$L$. It is easy to show that
$\omega_1$~is also nondegenerate. Integrating~$\omega_1$
for all submanifolds $f_2^{-1}(g),\ g\in G_2$, we obtain an integral of the
$\g$-structure~$\omega$.  

Thus the problem of integrating a nondegenerate $\g$-structure~$\omega$
can be divided into smaller parts:
\begin{enumerate}
\item the construction of an integral $f_2$ of the $\g_2$-structure $\omega_2$;
\item the integration of the  $\g_1$-structures
$\omega_1=\omega|_{f_2^{-1}(g)}$ for $g\in G_2$. 
\end{enumerate}

\begin{prop}\ \label{p3} 
  
1. The integration of any $\g$-structure with a solvable Lie algebra~$\g$
reduces to the integration of closed $1$-forms.

  2. The integration of any $\g$-structure reduces to the integration of 
  $\g$-structures with simple Lie algebras~$\g$ and to the integration of
  closed $1$-forms. 
\end{prop}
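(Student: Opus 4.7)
The plan is to apply the reduction procedure described just before the proposition iteratively, using standard structure theorems for Lie algebras to control what kind of $\g$-structure appears at each stage.

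For part~1, I would first invoke the following elementary fact: every finite-dimensional solvable Lie algebra $\g$ admits a flag of ideals
\[
\g=\g_{(0)}\supset\g_{(1)}\supset\dots\supset\g_{(n)}=0,\qquad \dim\g_{(i)}/\g_{(i+1)}=1,
\]
where each $\g_{(i+1)}$ is an ideal of $\g_{(i)}$. This is obtained by refining the derived series: on each abelian quotient $\g^{(k)}/\g^{(k+1)}$ one can pick any complete flag of subspaces, and these lift to ideals of the corresponding $\g_{(i)}$. Now apply the reduction procedure with $\g_1=\g_{(n-1)}$ (a one-dimensional abelian ideal of $\g$): integrating the resulting nondegenerate $\g/\g_{(n-1)}$-structure $\omega_2$ reduces, by induction on $n$, to integrating closed $1$-forms. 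On each fiber $f_2^{-1}(g)$ we are left with a nondegenerate $\g_{(n-1)}$-structure, and since $\g_{(n-1)}$ is one-dimensional abelian, such a structure is by definition a closed $1$-form (the identification $G_1\cong\R$ makes the integrability condition $d\omega_1=-\tfrac12[\omega_1,\omega_1]=0$ literal). This completes the reduction.

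For part~2, I would use the Levi--Mal\-cev decomposition in the following way. Let $\mathfrak r$ be the radical of $\g$, so $\g/\mathfrak r$ is semisimple and decomposes as a direct sum $\mathfrak s_1\oplus\dots\oplus\mathfrak s_k$ of simple ideals. Applying the reduction procedure first with $\g_1=\mathfrak r$ splits the integration of $\omega$ into the integration of a nondegenerate $(\g/\mathfrak r)$-structure on $M$ together with the integration of nondegenerate $\mathfrak r$-structures on the fibers. The latter reduces to the integration of closed $1$-forms by part~1. For the former, I would iterate the reduction once more: using that $\mathfrak s_1\oplus\dots\oplus\mathfrak s_{k-1}$ is an ideal of $\g/\mathfrak r$ with quotient $\mathfrak s_k$, one peels off the simple factors one by one, reducing integration of the semisimple $\g$-structure to integration of $\g$-structures whose Lie algebras are the individual simple summands $\mathfrak s_i$.

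The main obstacle is essentially conceptual rather than computational: everything hinges on checking that the reduction procedure of the preceding subsection is indeed applicable at each stage, i.e.\ that the ideals chosen are normal (automatic, since they are \emph{ideals}) and that the $\g_1$-structure induced on each fiber $f_2^{-1}(g)$ is again nondegenerate. This last point was asserted without proof in the preceding subsection, but it follows directly from the identities $\ker d\pi=\g_1$ and the fact that $\omega_p$ is surjective onto $\g$: the image of $(\omega_1)_p=\omega_p|_{T_pL}$ is exactly $\omega_p(\ker d_p f_2)=\omega_p(\ker d\pi\circ\omega_p)=\g_1$. Once this is in place the argument becomes a clean induction on $\dim\g$.
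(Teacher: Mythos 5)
Your part~2 and your verification that the fiberwise $\g_1$-structure is nondegenerate are fine and agree with the paper's (very terse) argument. The gap is in part~1, at the very first step. Refining the derived series does \emph{not} produce a flag of ideals of~$\g$: it only gives a chain in which each term is an ideal of the \emph{preceding} term (as you yourself state when you write ``each $\g_{(i+1)}$ is an ideal of $\g_{(i)}$''), so the one-dimensional bottom piece $\g_{(n-1)}$ need not be an ideal of~$\g$. Worse, over~$\R$ a solvable Lie algebra may have \emph{no} one-dimensional ideal at all: take $\g=\R^2\rtimes\R$ with the generator of~$\R$ acting on $\R^2$ by the rotation generator $\bigl(\begin{smallmatrix}0&-1\\1&0\end{smallmatrix}\bigr)$. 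This algebra is solvable, its derived series is $\g\supset\R^2\supset 0$, and no line in $\R^2$ (nor any other line in~$\g$) is an ideal of~$\g$. Since the reduction procedure requires $G_1$ to be \emph{normal} in~$G$, i.e.\ $\g_1$ to be an ideal of~$\g$, your first application of it --- quotienting by $\g_{(n-1)}$ --- is not legitimate, and the induction never gets started. (Over~$\C$ the needed flag of ideals of~$\g$ does exist by Lie's theorem, but that is not what your refinement argument establishes, and Appendix~A is not restricted to the complex case.)

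The repair is to peel from the top rather than from the bottom, which is exactly what the paper does. Since $\g$ is solvable, $[\g,\g]\ne\g$, so any hyperplane of~$\g$ containing $[\g,\g]$ is a codimension-one ideal $\h\subset\g$. Apply the reduction with $\g_1=\h$, so that $\g/\g_1\cong\R$ and $\omega_2$ is a single closed $1$-form; then recurse on the fibers $f_2^{-1}(g)$, which carry nondegenerate $\h$-structures with $\h$ solvable of dimension $n-1$. At each stage one needs only a codimension-one ideal of the \emph{current} algebra, which always exists; this yields precisely the chain $\g=\g_k\supset\g_{k-1}\supset\dots\supset\g_0=0$ with $\g_{i-1}$ an ideal of~$\g_i$ that the paper uses, and it produces $k=\dim\g$ closed $1$-forms to integrate. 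With this reversal your argument, including the nondegeneracy check and all of part~2, goes through.
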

\begin{proof}\
 
1.  We may assume without loss of generality that the Lie group~$G$
is connected and simply connected. Then $G$ is diffeomorphic to $\R^k$ for some
 $k\in\mathbb N$, and moreover any normal virtual Lie subgroup of~$G$ is closed
and also simply connected. 

  Since the Lie algebra~$\g$ is solvable, there is a chain 
  $$ \g=\g_k\subset\g_{k-1}\subset\dots\subset\g_1\subset\g_0=\{0\}$$ 
of subalgebras of~$\g$ such that $\g_{i-1}$ is an ideal of codimension~$1$ in
$\g_i$ ($i=1,\dots, k$). If $G_i$ are the corresponding subgroups of~$G$, then
all quotient subgroups $G_i/G_{i-1}$ are isomorphic with~$\R$, and the problem
of finding an integral of the $\g$-structure~$\omega$ reduces to the
integration of $k$ differential 1-forms.

2. The second statement of the theorem follows from the Levi theorem about 
the decomposition of a Lie group into the semidirect product of a semisimple
Levi subgroup and the radical, and from the decomposition of a simply
connected semisimple Lie group into a direct product of simple Lie groups. 
\end{proof}

\subsubsection{Integration along paths}

In conclusion, we describe a procedure for finding an integral~$f$ for a given
$\g$-structure~$w$, a procedure that generalizes the process of integrating
1-forms along paths and coincides with it when $G=\R$. Recall that a
$\g$-structure $\omega$ may be regarded as a connection~$H$ on the principal
$G$-bundle $\pi\colon M\times G\to M$.

Let $(a,g_0)$ be a fixed point of the manifold $M\times G$; our task is to
find an integral~$f$ of~$\omega$ such that $f(a)=g_0$. 
Consider an arbitrary curve $\Gamma\colon[t_0,t_1]\to M$ with $\Gamma(t_0)=a$.
There exists a unique curve
$\Tilde\Gamma\colon[t_0,t_1]\to M\times G$ satisfying the following conditions:
\begin{enumerate}
\item $\pi\circ\Tilde\Gamma=\Gamma$;
\item $\Tilde\Gamma'(t)\in H_{\Tilde\Gamma(t)}$ for all $t\in(t_0,t_1)$.
\end{enumerate}
This curve is precisely the horizontal lift of the curve $\Gamma$ by means of
the connection~$H$. 

The curve $\Tilde\Gamma$ may be equivalently described as follows:
Consider the $\g$-structure~$\omega_1$ defined on an interval $[t_0,t_1]$ by
$$ \omega_1=\omega\circ d\Gamma=X(t)dt,$$
where
$$ X(t)=\omega_{\Gamma(t)}(\Gamma'(t)),\qquad t\in[t_0,t_1]$$
is a curve in~$\g$. 
Then the desired integral of~$\omega_1$ is a curve $g\colon[t_0,t_1]\to G$ in
$G$ satisfying the differential equation
\begin{equation}\label{3}
g'(t)=X(t),\ t\in[t_0,t_1],\qquad  g(t_0)=g_0.
\end{equation} 
It is not hard to show that the curve $\Tilde\Gamma$ on the manifold
$M\times G$ has the form $\Tilde\Gamma(t)=(\Gamma(t),g(t))$. 

Let $L$ be the maximal integral manifold of the distribution~$H$ on $M\times
G$ passing through $(a,t_0)$. The desired integral $f$ of the $\g$-structure 
$\omega$ is a mapping $M\to G$ whose graph coincides with~$L$.   
Since the curve~$\Tilde\Gamma$ is tangent to~$H$, it lies inside~$L$, and
therefore 
$$f(\Gamma(t))=g(t)\qquad\text{for all }t\in[t_0,t_1].$$

The method described above enables us to find the integral of the
$\g$-structure $\omega$ along any curve~$\Gamma$ on~$M$. 
When $G=\R$, this method is the same as the usual procedure for the integration
of 1-forms. The standard results of the theory of connections show that the
point $f(\Gamma(t_1))$ depends only on the homotopy class of the curve 
$\Gamma(t)$. Thus our method allows to determine the integral~$f$ uniquely in
any simply connected neighborhood of~$a$.

\subsection{Integration of distributions}

 The next theorem is a
consequence of Proposition~\ref{p3}.
\begin{thr}
  Let $\g$ be a transitive symmetry algebra of the distribution~$E$, and
  assume that there is a point $a\in M$ such that the Lie algebra
  $N(\g_a)/\g_a$ is solvable. Then $E$ can be integrated by quadrature.
\end{thr}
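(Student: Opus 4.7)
The plan is to combine the normalizer theorem from subsection~\ref{sec:n} with the reduction Proposition~\ref{p3} about integrating $\g$-structures with solvable Lie algebras. The overall strategy is to cut $M$ down to a smaller submanifold on which the given symmetry algebra becomes simply transitive, translate the problem into integration of a $\g'$-structure where $\g'=N(\g_a)/\g_a$, and then exploit solvability to finish by quadrature.

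First I would invoke the construction from subsection~\ref{sec:n} to produce the submanifold $S=S_a$ passing through $a$. Recall that $S$ is cut out in a neighborhood of $a$ by the vanishing of the coefficient functions $\F=\{f_\alpha\}$ appearing in the expansions~(\ref{1}) of elements of $\g_a$ with respect to a fixed complementary basis $X_1,\dots,X_n$ of $E$. Since $\g$ is given explicitly and the $X_i$ are a chosen frame transverse to $E$, these equations are obtained algebraically from the data $(\g,E)$; no integration is required to write down~$S$. By Theorem~\ref{tn}, we have $T_pS\supset E_p$ for all $p\in S$, and by Proposition~\ref{pn}, the restriction $\tilde\g=N_\g(\g_a)|_S$ is a simply transitive symmetry algebra of the restricted distribution $\tilde E=E|_S$, with characteristic ideal $\tilde\aa=\g_a$.

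Next I would translate this into a $\g'$-structure. By the correspondence established in the subsection on simply transitive symmetry algebras and $\g$-structures, the simply transitive algebra $\tilde\g$ of $\tilde E$ yields a nondegenerate $\g'$-valued $1$-form $\om$ on $S$ with $\g'=\tilde\g/\tilde\aa\cong N(\g_a)/\g_a$, whose kernel at each point is exactly $\tilde E_p$. The maximal integral manifolds of $\tilde E$ (equivalently, of $E$ restricted to $S$) are then the connected components of the fibers of any integral $f\colon S\to G'$ of this $\g'$-structure, where $G'$ is a Lie group with Lie algebra $\g'$.

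Finally, since $\g'=N(\g_a)/\g_a$ is solvable by hypothesis, Proposition~\ref{p3}(1) reduces the integration of the $\g'$-structure $\om$ to the integration of a finite chain of closed $1$-forms, that is, to quadrature; concretely one picks a composition series $\g'=\g'_k\supset\g'_{k-1}\supset\dots\supset\g'_0=\{0\}$ by ideals of codimension one, and successively integrates the quotient $\R$-structures along the fibers produced at each stage as explained in the reduction subsection. Running this construction through every point $a\in M$ yields all maximal integral manifolds of $E$, completing the integration of $E$ by quadrature. The main obstacle, conceptually, is verifying that passage from $M$ to $S_a$ together with the identification $\tilde\g/\tilde\aa\cong N(\g_a)/\g_a$ is algebraic and does not itself require integration; this is precisely what the normalizer theorem and Proposition~\ref{pn} guarantee, so once they are in hand the argument is essentially a concatenation of the earlier reduction results.
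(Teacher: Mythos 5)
Your proposal is correct and follows essentially the same route the paper intends: the paper presents this theorem as a direct consequence of Proposition~\ref{p3}, with the reduction to a simply transitive algebra on $S_a$ via the normalizer theorem and Proposition~\ref{pn}, the passage to the associated nondegenerate $\g'$-structure, and the solvable-case reduction to closed $1$-forms being exactly the three-step decomposition sketched immediately after the theorem. The only minor imprecision is writing $\tilde\g/\tilde\aa\cong N(\g_a)/\g_a$ where in general one only gets a quotient of $N(\g_a)/\g_a$, but since quotients of solvable algebras are solvable this does not affect the argument.
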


In the most general case the problem of integration of a completely integrable
distribution with a transitive symmetry algebra can be divided into the
following three parts:
\begin{enumerate}
\item[(1)] the reduction of the problem to the integration of a
  distribution with a simply transitive symmetry algebra isomorphic to 
  $N(\g_a)/\g_a$; in terms of coordinates, this is equivalent to the solution
  of simultaneous equations (which are, in general, transcendental);
\item[(2)] the integration of $\g$-structures with simple Lie algebras~$\g$
  from the decomposition of the Levi subalgebra of $N(\g_a)/\g_a$; in terms of
  coordinates, this is equivalent to the solution of finitely many ordinary
  differential equations of the form~(\ref{3});
\item[(3)] the integration of a $\g$-structure, where the Lie algebra~$\g$ is
  solvable and coincides with the radical of $N(\g_a)/\g_a$; this reduces to
  the integration of a finite number (equal to $\dim\g$) of $1$-forms, which,
  in terms of coordinates, is equivalent to usual integration.
\end{enumerate}

\subsection{Superposition principle}

Let $\omega$ be an arbitrary $\g$-structure on~$M$, where $\g$~is a
Lie algebra of a Lie group~$G$. Assume that $G$ effectively acts
on a certain manifold~$N$. Then there is a natural homomorphism 
$\alpha\colon\g\to\D(N)$ of Lie algebras. For each $q\in N$, consider the
mapping $\alpha_q\colon \g\to T_qN$ defined by $\alpha_q(X)=\alpha(X)_q$.

Let us fix a point $a\in M$ and try to find a (local) integral~$f$
of~$\omega$ satisfying the initial condition $f(a)=e$.

\begin{prop}\label{p6}\ 

1. Let $f$ denote the desired integral of the $\g$-structure~$\omega$, and let
$b$~be a point of~$N$. Then the mapping 
$F\colon M\to N,\ p\to f(p).b$ satisfies the following differential 
equation of the first order:  
\begin{equation}\label{eqmn}
d_pF=\alpha_{F(p)}\circ\omega_p, \quad p\in M.
\end{equation}

2. If $a\in M$ and $b\in N$ are any points, then equation~(\ref{eqmn}) with
initial condition $F(a)=b$ has a unique solution in some neighborhood of~$a$.
\end{prop}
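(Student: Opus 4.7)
My plan is to prove both parts by direct computation, with part~2 reducing to an application of Frobenius on $M\times N$ that generalizes the argument used in Theorem~\ref{t2}.

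For part~1, I would write $F=\mu_b\circ f$, where $\mu_b\colon G\to N$, $g\mapsto g.b$ is the orbit map, and apply the chain rule: $d_pF=d_{f(p)}\mu_b\circ d_pf$. The definition of integral says that, after identifying $T_{f(p)}G$ with $\g$ via right translation, $d_pf=\omega_p$; equivalently $d_pf=(dR_{f(p)})_e\circ\omega_p$ as a map into $T_{f(p)}G$. Differentiating the $G$-equivariance identity $\mu_b\circ R_g=\mu_{g.b}$ at $e$ yields $d_g\mu_b\circ(dR_g)_e=d_e\mu_{g.b}=\alpha_{g.b}$. Substituting $g=f(p)$ and composing with $\omega_p$ gives exactly $d_pF=\alpha_{F(p)}\circ\omega_p$.

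For part~2, a short existence proof is to apply Theorem~\ref{t2} to produce a local integral $f$ of $\omega$ near $a$ with $f(a)=e$ and set $F(p)=f(p).b$; part~1 then verifies that $F$ solves~(\ref{eqmn}) with $F(a)=b$. For uniqueness (and for a self-contained argument that does not rely on lifting $F$ to $G$) I would mimic the proof of Theorem~\ref{t2} directly on $M\times N$. Define
\begin{equation*}
\tilde H_{(p,q)}=\{\,(X_p,\alpha_q(\omega_p(X_p)))\mid X_p\in T_pM\,\}\subset T_{(p,q)}(M\times N),
\end{equation*}
a distribution of dimension $\dim M$ whose integral manifolds projecting diffeomorphically onto $M$ are exactly the graphs of local solutions of~(\ref{eqmn}). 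Once $\tilde H$ is shown to be involutive, Frobenius produces a unique maximal integral manifold through $(a,b)$, and hence a unique local solution $F$ with $F(a)=b$.

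The heart of the proof is thus the verification of involutivity. Take vector fields $Z_i=X_i+V_i$ tangent to $\tilde H$, where $X_i$ is a vector field on $M$ (regarded as tangent to $M\times N$) and $V_i(p,q)=\alpha_q(\omega_p(X_i|_p))$. Writing $\xi_i=\omega(X_i)\colon M\to\g$ and using that $\alpha$ is a Lie algebra homomorphism, a bracket computation splitting $[Z_1,Z_2]$ into $[X_1,X_2]$, the mixed brackets $[X_i,V_j]$, and the fiberwise bracket $[V_1,V_2]$ gives
\begin{equation*}
[Z_1,Z_2]_{(p,q)}=[X_1,X_2]_p+\alpha_q\bigl(X_1(\xi_2)-X_2(\xi_1)+[\xi_1,\xi_2]\bigr).
\end{equation*}
The Maurer--Cartan identity $d\omega(X_1,X_2)=-[\omega(X_1),\omega(X_2)]$ from Proposition~\ref{p1}, combined with $d\omega(X_1,X_2)=X_1\omega(X_2)-X_2\omega(X_1)-\omega([X_1,X_2])$, shows that the parenthesized expression equals $\omega_p([X_1,X_2]_p)$, so $[Z_1,Z_2]=Z_{[X_1,X_2]}$ is tangent to $\tilde H$. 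The only real obstacle is bookkeeping: being careful in part~1 with the right-translation identification $T_gG\cong\g$, and in part~2 correctly separating the three types of brackets (in particular, $[X_i,V_j]$ is $X_i$ differentiating the $\g$-valued function $\xi_j$ in the $M$-direction, while $[V_1,V_2]$ is taken on $N$ at fixed $p$). Once the decomposition is set up correctly, the Maurer--Cartan equation does all the real work.
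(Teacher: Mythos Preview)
Your proof is correct and follows essentially the same approach as the paper: part~1 is the chain rule applied to the orbit map (the paper writes it as the composition $M\xrightarrow{f}G\xrightarrow{\pi}G\times N\xrightarrow{\sigma}N$, which is just your $\mu_b\circ f$), and part~2 is Frobenius applied to the graph distribution on $M\times N$. The paper's proof of part~2 merely asserts that equation~(\ref{eqmn}) determines a completely integrable distribution on $M\times N$ whose integral leaves are the solution graphs, whereas you actually supply the involutivity check via the Maurer--Cartan identity; so your argument is more detailed but otherwise identical in spirit.
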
  
\begin{proof}\ 

 1. Indeed, we can write $F$ as the composition of the mappings
  $$ M \stackrel{f}{\rightarrow} G\stackrel{\pi}{\rightarrow} G\times
  N\stackrel{\sigma}{\rightarrow} N,$$ 
where $\pi(g)=(g,b)$ and $\sigma$ is the action of~$G$ on~$N$.  The
composition of the differentials of these mappings is precisely
$\alpha_{F(p)}\circ\omega_p$. 

2. Here it suffices to note that equation~(\ref{eqmn}) determines a completely
integrable distribution on $M\times N$ whose maximal integral manifolds are 
precisely the graphs of the solutions of~(\ref{eqmn}).    
\end{proof}

Equation~(\ref{eqmn}) satisfies the following {\em superposition principle\/}: 

\begin{thr}
There exist a number $k\in \mathbb N$, a smooth function 
$$\Phi\colon\underbrace{N\times\dots\times N}_{k+1\text{ \upshape times}}
\to N,$$
(independent of~$\omega$), and $k$~particular solutions,
$F_1,\dots,F_k$, of equation~(\ref{eqmn}) such that  any solution 
of~(\ref{eqmn}) satisfying the initial condition $F(a)=b$ has the form
$$F(p)=\Phi(F_1(p),\dots,F_k(p),b).$$ 
\end{thr}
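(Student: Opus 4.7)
The plan is to reduce the superposition principle to the obvious fact that any solution $F$ is determined by the integral $f\colon M\to G$ via $F(p)=f(p).b$, as established in Proposition~\ref{p6}.\,1. Thus if finitely many special solutions $F_1,\dots,F_k$ of~(\ref{eqmn}) permit us to reconstruct $f(p)$ pointwise by a universal rule, the theorem follows immediately.

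First I would exploit the effectiveness of the action of $G$ on $N$ to choose $k$ points $b_1,\dots,b_k\in N$ such that the smooth map
\[
\Psi\colon G\to\underbrace{N\times\dots\times N}_{k},\quad
g\mapsto(g.b_1,\dots,g.b_k),
\]
is a local diffeomorphism in a neighbourhood of $e\in G$. Such a choice exists: the infinitesimal stabiliser at $b\in N$ is $\g_b=\ker\alpha_b\subset\g$, and effectiveness (together with the finite-dimensionality of $\g$) implies that one can successively pick $b_1,b_2,\dots$ strictly decreasing $\g_{b_1}\cap\dots\cap\g_{b_j}$; after at most $k\le\dim\g$ steps the intersection is zero, so $d_e\Psi$ is injective, and by $G$-equivariance $d_g\Psi$ is injective for $g$ near~$e$, whence $\Psi$ is a local diffeomorphism onto its image. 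After shrinking, let $\Psi^{-1}\colon U\to G$ be its smooth inverse, where $U$ is an open neighbourhood of $(b_1,\dots,b_k)$ in $N^k$.

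Next I would define the particular solutions and the superposition function. Let $f\colon M\to G$ be the local integral of~$\omega$ with $f(a)=e$ provided by Theorem~\ref{t2}, and set
\[
F_i(p)=f(p).b_i,\qquad i=1,\dots,k.
\]
By Proposition~\ref{p6}.\,1 each $F_i$ is a solution of~(\ref{eqmn}). Define, on the open set where it makes sense,
\[
\Phi\colon U\times N\to N,\qquad
\Phi(n_1,\dots,n_k,b)=\Psi^{-1}(n_1,\dots,n_k).b.
\]
Note that $\Phi$ depends only on the $G$-action on $N$ and on the chosen points $b_1,\dots,b_k$; it does not involve $\omega$.

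Finally I would verify the superposition formula. If $F$ is any solution of~(\ref{eqmn}) with $F(a)=b$, then by Proposition~\ref{p6}.\,1 we have $F(p)=f(p).b$. On the other hand, by the definition of $\Psi$ and the $F_i$,
\[
\Psi(f(p))=(f(p).b_1,\dots,f(p).b_k)=(F_1(p),\dots,F_k(p)),
\]
so $\Psi^{-1}(F_1(p),\dots,F_k(p))=f(p)$ and hence
\[
\Phi(F_1(p),\dots,F_k(p),b)=f(p).b=F(p),
\]
as required. The only real obstacle is the construction of the points $b_1,\dots,b_k$ making $\Psi$ a local diffeomorphism; once effectiveness is translated into the vanishing of a common infinitesimal stabiliser, the remainder is bookkeeping.
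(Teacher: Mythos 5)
Your proof is correct and follows essentially the same route as the paper: choose points $b_1,\dots,b_k$ whose stabilizers meet trivially, recover $f(p)$ from $(F_1(p),\dots,F_k(p))=(f(p).b_1,\dots,f(p).b_k)$, and set $\Phi(q_1,\dots,q_k,b)=g(q_1,\dots,q_k).b$. The only difference is that the paper imposes the group-level condition $\bigcap_{i=1}^k G_{b_i}=\{e\}$ and uses the resulting simply transitive action of $G$ on the orbit of $(b_1,\dots,b_k)$ to define the reconstruction map on the whole orbit, whereas you arrange only the infinitesimal condition $\bigcap_i\ker\alpha_{b_i}=0$ and invert $\Psi$ locally near $e$ --- which suffices here since the integral $f$ is itself only local.
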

\begin{proof}
  Let $b_1,\dots,b_k$ be $k$ points in~$N$ such that 
  $\cap_{i=1}^k G_{b_k}=\{e\}$. The existence of such $k$~points follows from
  the effectiveness of the action of~$G$ on~$N$. Let now $F_1,\dots,F_k$ be
  solutions of~(\ref{eqmn}) satisfying the initial conditions $F_i(a)=b_i,\
  i=1,\dots,k$. Consider the orbit $\mathcal O$ of the point $(b_1,\dots,b_k)$
  under the componentwise action of~$G$ on the manifold
  $\underbrace{N\times\dots\times N}_{k\text{ \upshape times}}$.
  The action of $G$ on~$\mathcal O$ is simply transitive, so that for any point
  $(q_1,\dots,q_k)\in \mathcal O$, there is a unique element
  $g(q_1,\dots,g_k)\in G$ such that
$$g(q_1,\dots,q_k).(b_1,\dots,b_k)=(q_1,\dots,q_k).$$
Let  
$$\Phi(q_1,\dots,q_k,b)=g(q_1,\dots,q_k).b$$ 
and show that the function~$\Phi$ indeed satisfies the required property. 
If $f$ is an integral of the $\g$-structure $\omega$ and if $f(a)=e$, then, by
Proposition~\ref{p6}, the functions~$F_i$ have the form $p\mapsto f(p).b_i$ for
all $i=1,\dots,k$. It follows that
$$ f(p)=g(F_1(p),\dots,F_k(p)), \quad\text{for all }p\in M.$$
Thus the mapping $F\colon p\mapsto f(p).b$ is a solution of 
equation~(\ref{eqmn}) with initial condition $F(a)=b$ and can be written as
follows:
$$ F(p)=g(F_1(p),\dots,F_k(p)).b=\Phi(F_1(p),\dots,F_k(p),b).$$ 
\end{proof}

\begin{ex}
Suppose that $M=\R$. Then a $\g$-structure~$\omega$ on~$M$ may be identified
with the curve $X(t),\ t\in \R$, in the Lie algebra~$\g$ such that
$\omega=X(t)\,dt$. Equation~(\ref{eqmn}) will then have the form
\begin{equation}\label{eqs1}
F'(t)=\alpha(X(t))_{F(t)}
\end{equation}
and will be a non-autonomous ordinary differential equation of the first order
on the manifold~$N$. By the above theorem, in order to construct the general
solution of this equation, we must find only a finite number of its particular
solutions. 

  If, for example, $N=\R^n$ and $G$ is the group of all linear transformations,
  then, in the standard coordinate system of~$N$, equation~(\ref{eqs1}) is
  written as a system of linear first-order differential equations:
\begin{equation}\label{eqs2}
F'(t)=A(t)F(t),
\end{equation}
with $F(t)\in \R^n$ and $A(t)\in\Mat_n(\R)$, while the superposition principle
becomes the well-known result that the general solution of such an equation is
a linear combination of $n$~particular solutions with linearly independent
initial conditions. 
\end{ex}

\end{document}